\numberwithin{theorem}{section}
\newcommand{\LQCP}{$\text{(LQCP)}_{\mathcal{L}}~$}
\newcommand{\LQCPONE}{$\text{(LQCP)}_{\mathcal{L}_1}~$}
\newcommand{\CC}{{\mathbb C}}
\newcommand{\RR}{{\mathbb R}}
\newcommand{\cC}{{\mathcal C}}
\newcommand{\cL}{{\mathcal L}}
\newcommand{\cN}{{\mathcal N}}
\newcommand{\cNL}{{\mathcal N}_1({\mathcal L}_1)}
\newcommand{\cU}{{\mathcal U}}
\newcommand{\cV}{{\mathcal V}}
\newcommand{\cW}{{\mathcal W}}
\newcommand{\cX}{{\mathcal X}}
\newcommand{\Ks}{K^{\star}}
\newcommand{\Kos}{K^{\star}_1}
\newcommand{\Kts}{K_2^{\star}}
\newcommand{\Kots}{K_{12}^{\star}}
\newcommand{\ol}{\overline}
\newcommand{\p}{\partial}
\renewcommand{\Im}{\textup{Im}}
\renewcommand{\Re}{\textup{Re}}
\newcommand{\Ker}{\textup{Ker}}
\title{The regular indefinite linear quadratic optimal control problem: stabilizable case%
  \thanks{Published electronically Feb. 20 2018.
\funding{Supported by the Natural Sciences and Engineering Research Council of Canada (NSERC).}}}
\author{Marijan Vukosavljev%
  \thanks{Dept. of Electrical and Computer Engineering, University of Toronto, Canada (\email{mario.vukosavljev@mail.utoronto.ca}, \email{broucke@control.utoronto.ca}).}%
  \and
  Angela~P.~Schoellig%
   \thanks{University of Toronto Institute for Aerospace Studies, University of Toronto, Canada (\email{schoellig@utias.utoronto.ca}).}%
  \and
  Mireille~E.~Broucke%
  \footnotemark[2]
}
\begin{document}
\maketitle

\begin{abstract}
This paper addresses an open problem in the area of linear quadratic optimal control. We consider the regular, infinite-horizon, stability-modulo-a-subspace, indefinite linear quadratic problem under the assumption that the dynamics are stabilizable. Our result generalizes previous works dealing with the same problem in the case of controllable dynamics. We explicitly characterize the unique solution of the algebraic Riccati equation that gives the optimal cost and optimal feedback control, as well as necessary and sufficient conditions for the existence of optimal controls.

\end{abstract}

\begin{keywords}
linear quadratic optimal control, indefinite cost functional, stability-modulo-a-subspace, stabilizability
\end{keywords}

\begin{AMS}
  93C05, 93C35
\end{AMS}

\section{Introduction}

In this paper we consider the regular, infinite-horizon linear quadratic optimal control problem in which the cost functional is the integral of an {\em indefinite} quadratic form. The {\em regular} linear quadratic (LQ) problem, when the quadratic form in the cost functional is {\em positive definite} in the control variables, has been studied extensively in the literature \cite{BROCKETT70,WONHAM85,MOO89}. It has been especially well studied under the standard assumption, the so-called {\em positive semidefinite} case, when the quadratic form in the cost functional is positive semidefinite in the control and state variables simultaneously. The more general indefinite case imposes no definiteness condition in the control and state variables simultaneously \cite{WIL71, TRE89}. The LQ problem is termed {\em infinite-horizon} if the cost functional is integrated over time from zero to infinity.  Finally, the most typical treatment of the LQ problem is the {\em fixed-endpoint} problem where the state is required to converge to zero as time tends to infinity. The case when no such condition is imposed has also been studied and is referred to as the {\em free-endpoint} problem \cite{TRE89, TRE89b, GEE91}. In fact, an entire family of LQ problems can be obtained by requiring that the state converges to a subspace. This so-called {\em stability-modulo-a-subspace} family of LQ problems includes the fixed- and free-endpoint problems as special cases \cite{TRE89b, GEE91}. For the remainder of the paper, we restrict our attention to the regular and infinite-horizon versions of the problem, for otherwise the optimization problem may yield optimal controllers that are not static linear state feedbacks \cite{WIL86, MOO89}. Also, we focus on stability-modulo-a-subspace, since it is the more general case.
 
Traditionally, a complete solution of any variant of the LQ problem requires to find necessary and sufficient conditions for the existence of a finite optimal cost and optimal controls. Existence of a finite optimal cost is called well-posedness, while existence of an optimal control is called attainability. Further, when they exist, a complete solution  involves determining the optimal cost and an optimal control. Both should be expressed in terms of the given problem data; that is, the system matrices, the instantaneous cost matrices, and the desired subspace. 

In the regular, infinite-horizon, fixed-endpoint, positive semidefinite case, the LQ problem was fully resolved in 1968 by Wonham \cite{WONHAM68, WONHAM85}, resulting in the well known necessary and sufficient conditions involving stabilizability and detectability. The corresponding free-endpoint LQ problem was fully characterized much later \cite{GEE88,TRE02}, resulting in conditions involving output stabilizability, a condition less strict than stabilizability \cite{GEE88,TRE02}. In the regular, infinite-horizon, indefinite case, the fixed-endpoint problem was solved in 1971 by Willems \cite{WIL71}, while the free-endpoint problem and general stability-modulo-a-subspace were addressed in 1989 by Trentelman \cite{TRE89, TRE89b}. Importantly, all of the indefinite cases made use of the assumption that the dynamics are controllable. Moreover the solutions are incomplete in that only sufficient conditions for the existence of a finite optimal cost were given (except for the fixed-endpoint problem). The main contribution of this paper is to extend the above results for the regular, infinite-horizon, stability-modulo-a-subspace, indefinite case of the LQ problem. Rather than assuming controllability, we only require stabilizability.

It is well known that in both the positive semidefinite and indefinite cases of the regular, infinite-horizon,  stability-modulo-a-subspace LQ problem, the optimal cost and optimal controls  are given in terms of a particular solution of the algebraic Riccati equation (ARE) \cite{TRE02, TRE89}. In the treatment of the regular, infinite-horizon, indefinite LQ problem, the controllability assumption is crucial in order 
to utilize the geometry of the set of all real symmetric solutions of the ARE \cite{WIL71,LAN95}. 
In particular, if this solution set is nonempty, there exist a maximal and 
minimal solution of the ARE \cite{LAN95}. The regular, infinite-horizon, fixed-endpoint LQ problem, both definite and indefinite cases, has always been easier in the 
sense that the optimal cost and feedback control law are given in terms of the maximal solution, which is the only 
solution that can stabilize the closed-loop system \cite{WIL71,WONHAM68}. For the regular, infinite-horizon, stability-modulo-a-subspace, indefinite case and
under the assumption of controllability, the optimal cost and feedback control law are given by a real symmetric solution to 
the ARE that depends on both its maximal and minimal solutions \cite{TRE89b}. In contrast, under the stabilizability assumption, 
it is unclear which solution of the ARE to select because the geometry of the set of all real symmetric ARE solutions is less well-behaved. In particular, 
the minimal solution may no longer exist \cite{GOH86,LAN95}. This ambiguity of the correct choice of ARE solution for the regular, infinite-horizon, stability-modulo-a-subspace, indefinite LQ problem under merely stabilizable dynamics was discussed by Geerts 
\cite{GEE89,GEE91}, but it has remained elusive.

In this paper we give the exact form of the optimal feedback that solves the regular, infinite-horizon, stability-modulo-a-subspace, indefinite LQ problem under stabilizable dynamics. Thus we resolve the ambiguity regarding which solution of the ARE to take. Our result requires two assumptions, which are precisely our sufficient conditions for well-posedness: existence of a negative semidefinite solution to the algebraic Riccati
inequality (ARI) and stabilizability of the system dynamics. These assumptions may be compared to the sufficient conditions for well-posedness in \cite{TRE89}:
existence of a negative semidefinite solution to the ARE and controllability of the system dynamics. The first
assumption on existence of a negative semidefinite solution of the ARE or ARI provides for a lower bound on the 
value function, based on a result of Molinari \cite{MOL77}. Our generalization to the ARI is based on an observation by 
Geerts \cite{GEE89}. The generalization to the case when the dynamics are stabilizable proves to be the more difficult challenge, as discussed above. This extension constitutes the central contribution of the paper. Finally, we give necessary and sufficient conditions for optimal controls to exist, which, as pointed out in \cite{TRE89}, 
are nontrivial for regular, infinite-horizon, non-fixed-endpoint, indefinite LQ problems.

As a further validation of the correctness of our results, we recover known results for other variants of the regular, infinite-horizon LQ problem by adding assumptions to match those problems. In the regular, infinite-horizon, stability-modulo-a-subspace, indefinite case, if we assume controllable dynamics, we obtain the same necessary and sufficient conditions for the existence of optimal controls, the same form of the optimal cost, and the same form of the optimal control as stated in \cite{WIL71, TRE89, TRE89b}. In the regular, infinite-horizon, positive semidefinite LQ problem, for both the fixed- and free-endpoint cases, if we assume positive semidefineness, then we again obtain the same necessary and sufficient conditions for the existence of optimal controls, the same form of the optimal cost, and the same form of the optimal control as stated in \cite{TRE02}.

Our resolution of the gap in the LQ literature provides more than just an answer to an academic question. 
Recently, the work in \cite{PACH09} considered a linear term in the state of the cost functional and a 
free-endpoint objective, albeit over the finite-horizon; with a transformation, this cost can be converted 
to an indefinite problem with stabilizable but not controllable dynamics. 
The gap was also recently discussed in \cite{ENG08}, which deals with the cooperative indefinite LQ problem. As such, our result has application to game theoretic formulations and economics. 

The outline of this paper is as follows. In the remainder of this section we will introduce most of the notational conventions 
that will be used. In Section~\ref{sec:prob} we present the problem 
statement. In Section~\ref{sec:ARE} we summarize the key ingredients needed regarding the geometry of the ARE solutions. 
In Section \ref{sec:main} we state and prove our main results. In Section \ref{sec:discuss} we compare our main result
to existing results in the literature. 

{\bf Notation.} \ We use the following notation.
Let $I_n$ be the $n \times n$ identity matrix (the subscript is omitted if the dimension is clear from the context). Let $P^{\dagger}$ denote the (unique) pseudo-inverse of $P \in \RR^{n \times m}$. 
The set of eigenvalues of $A \in \RR^{n \times n}$ is denoted by $\sigma(A)$.
A subspace $\cV \subset \RR^n$ is $A-$invariant if $A \mathcal{V} \subset \mathcal{V}$.
We use the following subsets of the complex plane: 
$\CC^- := \{s \in \CC \; | \; \Re(s) < 0\}$, $\CC^0 := \{s \in \CC \; | \; \Re(s) = 0\}$, and
$\CC^+ := \{s \in \CC \; | \; \Re(s) > 0\}$.
Given a real monic polynomial $p$ there is a unique factorization $p = p_{-} \cdot p_0 \cdot p_+ $ into real monic polynomials with $p_-$, $p_0$, and $p_+$ having all roots in $\CC^-$, $\CC^0$, and $\CC^+$, respectively. Then if $A \in \RR^{n \times n}$ and if $p$ is its characteristic polynomial, then we define the spectral subspaces
$\mathcal{X}^-(A) := \Ker (p_-(A))$, $\mathcal{X}^0(A) := \Ker (p_0(A))$, and
$\mathcal{X}^+(A) := \Ker (p_+(A))$. Each of these subspaces are $A-$invariant and the restriction of $A$ to $\mathcal{X}^-(A)(\mathcal{X}^0(A),\mathcal{X}^+(A))$ has characteristic polynomial $p_-(p_0,p_+)$.
For two subspaces $\cV$ and $\cW$, let $\cV \oplus \cW$ denote their direct sum and let $\cV \sim \cW$ denote that they are isomorphic.
For an arbitrary matrix $A \in \RR^{n \times n}$ and subspace $\cV \subset \RR^n$ we define the subspace $\langle A \; | \; \cV \rangle := \cV + A \cV + \ldots A^{n-1} \cV$, and by further writing $\cV = \Ker(W)$ for some $W \in \RR^{p \times n}$ we also define $\langle \cV \; | \; A \rangle := \Ker (W) \cap \Ker (WA) \ldots \cap \Ker (WA^{n-1})$.
For a linear time-invariant system, $\dot{x} = Ax + Bu$, the controllable subspace will be denoted in the usual way $\langle A \; | \; \Im (B) \rangle$.
If there is an output $y = Cx$, then $\langle \Ker (C) \; | \; A \rangle$
denotes the unobservable subspace of $(C,A)$.
If $M$ is a real $n \times n$ matrix and $\cV$ is a subspace of $\RR^n$, then $M^{-1}\cV := \{ x \in \RR^n \; | \; Mx \in \cV \}$. If $\cV$ is a subspace of $\RR^n$ then $\cV^{\perp}$ denotes its orthogonal complement with respect to the standard Euclidean inner product.

Let $\RR^+ := \{t \in \RR \; | \; t \geq 0\}$ and $\RR^e := \RR \cup \{-\infty, +\infty\}$. Additionally, given a function $f: \RR \rightarrow \RR$, the statement that $\lim_{t \rightarrow \infty} f(t)$ exists in $\RR^e$ means that $\lim_{t \rightarrow \infty} f(t)$ is either equal to a real number, $\infty$, or $-\infty$ in the usual sense.

We denote the space of all measurable vector-valued functions on $\RR^+$ that are locally square integrable as
$L_{2,loc}^{m}(\RR^+) = \left\{\left. u: \RR^+ \rightarrow \RR^m \; \right| \; (\forall T \geq 0) 
\int_0^T u(t)^{\top}u(t) \, dt < \infty \right\}$.
Let $d_{\cL} : \RR^n \rightarrow [0,\infty)$ denote the function giving the minimum Euclidean distance from 
a point to a set $\mathcal{L} \subset \RR^n$. 

Given a quadratic form on $\RR^n$, $\omega : \RR^n \rightarrow \RR$, it is said to be {\em positive definite} if for all $x \in \RR^n$, $\omega(x) \geq 0$, and $\omega(x) = 0$ if and only if $x = 0$; {\em positive semidefinite} if for all $x \in \RR^n$, $\omega(x) \geq 0$; {\em negative definite} if $-\omega$ is positive definite; {\em negative semidefinite} if $-\omega$ is positive semidefinite; and {\em indefinite} if $\omega$ is neither positive semidefinite nor negative semidefinite. Writing $\omega(x):= x^{\top}P x$ for some symmetric matrix $P \in \RR^{n \times n}$, we say that the matrix $P$ is positive definite if the quadratic form $\omega$ is positive definite and so on. We write $P > 0$, $P \geq 0$, $P < 0$, and $P \leq 0$ if 
the matrix is positive definite, positive semidefinite, negative definite, and negative semidefinite, 
respectively. 
Given symmetric matrices $P, Q \in \RR^{n \times n}$, we write $P < Q$ if $Q - P > 0$, and likewise for 
the other inequalities. Let $\Lambda$ denote a subset of the set of all symmetric matrices in $\RR^{n \times n}$. 
We say that \textit{$M^+$ ($M^-$) is the maximal (minimal) element on $\Lambda$} if $M^+ \in \Lambda$ ($M^- \in \Lambda$) 
and for all $M \in \Lambda$, $M \leq M^+$ ($M \geq M^-$). The maximal and minimal elements, which are called the \textit{extremal elements on $\Lambda$}, are unique if they exist since $\Lambda$ forms a partially ordered set.

\section{Problem Statement}
\label{sec:prob}

We consider the linear control system
\begin{equation} 
\label{eq:sysgeneral}
\dot{x} = Ax + Bu, \qquad x(0) = x_0,
\end{equation}
where $x\in \RR^n$ and $u \in \RR^m$. For a control function $u \in L_{2,loc}^m(\RR^+)$, let $x(\cdot ; x_0,u)$ 
denote the state trajectory of \eqref{eq:sysgeneral} starting at $x_0 \in \RR^n$. Then for $T \geq 0$, the 
{\em cost function} is
\begin{equation}  
\label{eq:finTimeCostFunctGeneral}
J_T(x_0,u) = \int_0^T \omega(x(t;x_0,u)) \, dt
\end{equation}
with a quadratic {\em instantaneous cost} 
\begin{equation} 
\label{eq:qformMostGeneral}
\omega(x,u) := 
x^{\top}Qx + u^{\top}R u = 
\begin{bmatrix}x^{\top} & u^{\top} \end{bmatrix} W 
\begin{bmatrix} x \\ u \end{bmatrix} \,, \qquad 
W := \begin{bmatrix} Q & 0 \\ 0 & R \end{bmatrix} \,, \qquad R = I_m.
\end{equation}
We allow $Q$ to be indefinite, whereas $R := I_m > 0$. More general quadratic cost functions can be considered, 
but they can be converted via a feedback transformation to the form we use here, as in Chapter 10 of \cite{TRE02}. 
This feedback transformation does not affect solvability of the problem; hence, there is no loss of 
generality in our choice of $W$.

Because $W$ may be indefinite, we define the set of control inputs that yield a cost that is either finite, $\infty$, or $-\infty$:
\begin{equation}
\cU(x_0) := 
\left\{ \left. u \in L_{2,loc}^m(\RR^+) \; \right| \; \lim_{T \rightarrow \infty} J_T(x_0,u) 
\text{ exists in } \RR^e\right\}. 
\end{equation}
\noindent Let $\mathcal{L} \subset \RR^n$ be a subspace. The set of permissible control inputs such that the state asymptotically converges to $\cL$ is
\begin{equation} 
\label{eq:controlsetAnyEPGeneral}
\cU_{\cL}(x_0) := 
\left\{ \left. u \in \cU(x_0) \; \right| \; \lim_{t \rightarrow \infty} d_{\cL}( x(t;x_0,u) ) = 0 \right\} \,.
\end{equation}
For $u \in \cU_{\cL}(x_0)$, we define
\begin{equation}
J(x_0, u) := \lim_{T \rightarrow \infty} J_T(x_0, u).
\end{equation}
We define the {\em optimal cost} or {\em value function} to be 
\begin{equation} 
\label{eq:valuefuncAnyEPGeneral}
V_{\mathcal{L}}(x_0) := \inf \{ J(x_0,u) \; | \; u \in \cU_{\cL}(x_0) \} \,.
\end{equation}

Now we define the linear quadratic optimal control problem with stability-modulo-$\cL$ \LQCP.
\begin{problem}[\LQCP] 
Consider the system \eqref{eq:sysgeneral} with the quadratic cost criterion \eqref{eq:finTimeCostFunctGeneral}. 
Let $\mathcal{L} \subset \RR^n$ be a given subspace. For all $x_0 \in \RR^n$, find the optimal cost 
$V_{\cL}(x_0)$ and an optimal control $u^{\star} \in \cU_{\cL}(x_0)$ such that 
$V_{\cL}(x_0) = J(x_0,u^{\star})$. 
\end{problem}

The \LQCP is called {\em regular} (as opposed to singular) if $R > 0$. It is called 
{\em positive semidefinite} if $\omega$ is positive semidefinite on $\RR^{n + m}$, and {\em indefinite} otherwise. 
If $\mathcal{L}= \RR^n$, the \LQCP is called a {\em free-endpoint problem}, and if 
$\mathcal{L} = 0$, it is called a {\em fixed-endpoint problem}. 
We are particularly interested in characterizing two properties of the \LQCP.
\begin{defn}
We say the \LQCP is {\em well-posed} if for all $x_0 \in \RR^n$, $V_{\mathcal{L}}(x_0) \in \RR$.
We say the \LQCP is {\em attainable} if for all $x_0 \in \RR^n$, 
there exists a control $u^{\star} \in \cU_{\cL}(x_0)$ such that $V_{\mathcal{L}}(x_0) = J(x_0, u^{\star})$.
Such an input is called {\em optimal}. 
We say the \LQCP is {\em solvable} if it is both well-posed and attainable. 
\end{defn}

\section{Preliminaries}
\label{sec:ARE}

The main results on the \LQCP are centered on the algebraic Riccati equation (ARE): 
\begin{equation} 
\label{eq:ARE}
\phi(K) := A^{\top}K + KA + Q - KBB^{\top}K = 0 \,.
\end{equation}
The algebraic Riccati inequality (ARI) is given by $\phi(K) \ge 0$. For convenience, 
we define 
\begin{equation}A(K) := A - B B^T K.
\end{equation}
\noindent Also we define the following solution sets: 
\begin{align*}
\Gamma &:= \{K \in \RR^{n \times n} ~|~ K = K^{\top}, \phi(K) \geq 0\}, \\ 
\p \Gamma &:= \{ K \in \RR^{n\times n}  ~|~ K = K^{\top} , \; \phi(K) = 0 \}, \\
\Gamma_- &:= \{ K \in \Gamma \; | \; K \leq 0 \}.
\end{align*} 
The geometry of the solutions to the ARE can be studied in both the controllable and stabilizable cases; 
see, in particular, Chapters 7 and 8 of \cite{LAN95} and also \cite{TRE89}. First we consider the case when 
$(A,B)$ is controllable. The next result summarizes what is known about the extremal solutions in 
$\Gamma$ and in $\p \Gamma$. 

\begin{theorem}
\label{thm:extremal}
Suppose $(A,B)$ is controllable. 
\begin{itemize}
\item[(i)]
If $\Gamma \neq \emptyset$, then the maximal and minimal solutions in $\Gamma$ exist, 
$\p \Gamma \neq \emptyset$, its maximal and minimal solutions exist, 
and they are identical to the maximal and minimal solutions in $\Gamma$. 
\item[(ii)]
If $\p \Gamma \neq \emptyset$, then its maximal and minimal solutions $K^+, K^- \in \p \Gamma$ 
satisfy: $\forall K \in \p \Gamma$, $K^- \leq K \leq K^+$. Moreover, they are the unique solutions 
in $\p \Gamma$ such that $\sigma(A(K^+)) \subset \CC^- \cup \CC^0$ and $\sigma(A(K^-)) \subset \CC^+ \cup \CC^0$.  
\end{itemize}
\end{theorem}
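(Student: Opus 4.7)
The plan is to prove part (ii) first by casting real symmetric ARE solutions as $n$-dimensional Lagrangian invariant subspaces of the Hamiltonian
\begin{equation*}
H := \begin{bmatrix} A & -BB^{\top} \\ -Q & -A^{\top} \end{bmatrix},
\end{equation*}
and then deduce part (i) by combining (ii) with a monotone construction from within $\Gamma$. For (ii), the correspondence $K \mapsto \cM_K := \{(x^{\top}, (Kx)^{\top})^{\top} \mid x \in \RR^n\}$ identifies $K \in \p\Gamma$ bijectively with $n$-dimensional $H$-invariant Lagrangian subspaces, and satisfies $\sigma(A(K)) = \sigma(H|_{\cM_K})$. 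The symplectic symmetry of $H$ forces $\sigma(H)$ to be symmetric about the imaginary axis, while controllability of $(A,B)$ rules out pathologies (e.g., nonsemisimple eigenvalues on $\CC^0$) that would block the selection. The candidate $K^+$ is then defined by taking as $\cM_{K^+}$ the $H$-invariant Lagrangian subspace containing the generalized eigenspaces for $\CC^-$ together with a prescribed isotropic half of the eigenspace for $\CC^0$; symmetric reasoning defines $K^-$ using $\CC^+$. By construction, $\sigma(A(K^+)) \subset \CC^- \cup \CC^0$ and $\sigma(A(K^-)) \subset \CC^+ \cup \CC^0$.

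For the ordering in (ii), given any $K \in \p\Gamma$ and $\Delta := K^+ - K$, subtracting the two instances of $\phi = 0$ and regrouping via $A = A(K^+) + BB^{\top}K^+$ yields
\begin{equation*}
A(K^+)^{\top} \Delta + \Delta A(K^+) + \Delta BB^{\top} \Delta = 0.
\end{equation*}
Since $\sigma(A(K^+)) \subset \CC^- \cup \CC^0$, a Lyapunov argument, splitting $\RR^n$ into the stable and center invariant subspaces of $A(K^+)$, forces $\Delta \geq 0$; controllability is used on the center component to exclude indefinite contributions from the quadratic term $\Delta BB^{\top}\Delta$. The reverse inequality for $K^-$ is analogous, and uniqueness of $K^\pm$ with the stated spectral property follows because two such candidates would force $0 \leq K^+ - K$ and $0 \leq K - K^+$ simultaneously.

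For part (i), assume $\Gamma \neq \emptyset$ and fix $K_0 \in \Gamma$. I would run a Kleinman-type iteration: given $K_i$, solve a Lyapunov equation to produce $K_{i+1}$ with $K_{i+1} \leq K_i$ and $\phi(K_{i+1}) \geq 0$, so that the sequence is monotone in $\Gamma$; boundedness (obtained from the Hamiltonian-based $K^-$, whose existence follows as in (ii)) yields a limit in $\p\Gamma$. This shows $\p\Gamma \neq \emptyset$, after which part (ii) supplies its extremal elements $K^+, K^-$. A refinement of the same $\Delta$-identity, now with $\phi(K) \geq 0$ replacing $\phi(K) = 0$ on the right-hand side, gives $K \leq K^+$ and $K \geq K^-$ for every $K \in \Gamma$, so the extremal elements on $\Gamma$ coincide with those on $\p\Gamma$. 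The main obstacle throughout is the delicate handling of purely imaginary eigenvalues of $A(K^\pm)$: both the Hamiltonian subspace selection and the Lyapunov argument for $\Delta \geq 0$ require that these eigenvalues be semisimple and that the quadratic term in the $\Delta$-equation not create obstructions on the center subspace, which is exactly where the controllability hypothesis is essential.
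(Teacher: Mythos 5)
The paper does not actually prove this theorem: it is quoted from the literature, with part (i) attributed to Scherer (Theorem 14(b) of the cited reference) and part (ii) to Willems and to Lancaster--Rodman. So you are reconstructing classical proofs, and your Hamiltonian/invariant-subspace route is the right family of ideas, but your sketch has gaps at exactly the points where those references do the work. First, $K\in\p\Gamma$ corresponds not to arbitrary $n$-dimensional $H$-invariant Lagrangian subspaces but to those that are \emph{graphs}, i.e.\ complementary to $\{0\}\times\RR^n$; producing an invariant Lagrangian subspace with spectrum in $\CC^-\cup\CC^0$ does not by itself yield a real symmetric ARE solution, and the graph (and realness) property is a nontrivial lemma you never address. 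Second, your claim that controllability excludes nonsemisimple imaginary eigenvalues is false: with $n=m=1$, $A=0$, $B=1$, $Q=0$, the pair is controllable and $\p\Gamma=\{0\}$, yet $H=\bigl[\begin{smallmatrix}0&-1\\0&0\end{smallmatrix}\bigr]$ has a $2\times 2$ Jordan block at $0$. What is true -- and requires the hypothesis $\p\Gamma\neq\emptyset$, which your construction of $K^\pm$ never invokes -- is that the imaginary eigenvalues of $H$ have even partial multiplicities; the ``isotropic half'' must then be assembled from halves of Jordan chains, chosen conjugate-symmetrically to give a real subspace, and shown to be a graph. Third, for the ordering, your $\Delta$-identity is correct and the strictly stable directions are routine, but the entire difficulty is the center part: ``controllability is used on the center component to exclude indefinite contributions'' is an assertion, not an argument, and it is precisely the content of Willems' maximality proof.

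Part (i) as sketched also does not go through. A Kleinman/Newton step at $K_i$ needs the Lyapunov map $X\mapsto A(K_i)^{\top}X+XA(K_i)$ to be invertible, i.e.\ essentially $A(K_i)$ stable (or anti-stable); an arbitrary $K_0\in\Gamma$ need not be stabilizing, so your first iterate may not even exist. Moreover, the genuine Newton iterates satisfy $\phi(K_i)\le 0$ rather than $\phi(K_i)\ge 0$, so the sequence is not ``monotone in $\Gamma$''; and bounding the iteration by ``the Hamiltonian-based $K^-$, whose existence follows as in (ii)'' is circular, since (ii) presupposes $\p\Gamma\neq\emptyset$, which is exactly what (i) must establish. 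The standard repair is to start the iteration from a stabilizing feedback (available by controllability), show the iterates decrease and are bounded below by \emph{every} element of $\Gamma$ (using $\phi(K)\ge 0$ inside the Lyapunov representation), conclude that the limit lies in $\p\Gamma$ and dominates $\Gamma$, and then repeat the argument in reverse time with an anti-stabilizing feedback (this is where controllability, not mere stabilizability, is essential) to obtain $K^-$ and the coincidence of the extremal elements of $\Gamma$ and $\p\Gamma$.
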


\begin{proof}
The first statement is Theorem 14(b) in \cite{SCH91}. 
The second statement was proved in \cite{WIL71}. See also Theorem 7.5.1, p. 168, in \cite{LAN95}. 
\end{proof}

If $\p \Gamma \neq \emptyset$, define the {\em gap} of the ARE to be $\Delta:= K^+ - K^-$. 
Let $\Omega$ denote the set of all $A(K^-)-$invariant subspaces contained in $\mathcal{X}^+(A(K^-))$. 
The following theorem was first proven by Willems \cite{WIL71}; see also \cite{LAN95}.

\begin{theorem}[Theorem 3.1, \cite{TRE89}] 
\label{thm:geoAREproj}
Let $(A,B)$ be controllable and suppose $\partial\Gamma \neq \emptyset$. 
If $\mathcal{V} \subset \Omega$, then $\RR^n = \mathcal{V} \oplus \Delta^{-1}(\mathcal{V} ^{\perp})$. 
There exists a bijection $\gamma: \Omega \rightarrow \partial\Gamma$ defined by
\begin{equation}
\gamma(\mathcal{V}) := K^- P_{\mathcal{V}} + K^+ (I_n - P_{\mathcal{V}}),
\end{equation}
where $P_{\mathcal{V}}$ is the projection onto $\mathcal{V}$ along 
$ \Delta^{-1}(\mathcal{V} ^{\perp})$. If $K = \gamma(\mathcal{V})$, then
$\mathcal{X}^+(A(K)) = \mathcal{V}$, $\mathcal{X}^0(A(K)) = \Ker(\Delta)$, and
$\mathcal{X}^-(A(K)) = \mathcal{X}^-(A(K^+))\cap \Delta^{-1}(\mathcal{V} ^{\perp})$.
\end{theorem}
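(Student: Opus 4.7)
The plan is to exploit the Sylvester structure of the gap $\Delta := K^+ - K^-$. Subtracting the ARE for $K^-$ from that for $K^+$ and rewriting $K^+ BB^\top K^+ - K^- BB^\top K^- = K^+ BB^\top \Delta + \Delta BB^\top K^-$, I would first extract the key identity
\begin{equation*}
A(K^+)^\top \Delta + \Delta A(K^-) = 0,
\end{equation*}
together with its transpose. Since $K^+ \geq K^-$ forces $\Delta \geq 0$, and since $A(K^-) - A(K^+) = BB^\top \Delta$ vanishes on $\Ker \Delta$, we get $A(K^-)|_{\Ker \Delta} = A(K^+)|_{\Ker \Delta}$; the spectral constraints in Theorem~\ref{thm:extremal}(ii) then force the common spectrum to lie in $\CC^0$, so $\Ker \Delta \subset \cX^0(A(K^-)) \cap \cX^0(A(K^+))$. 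A short Lyapunov argument based on the Sylvester identity upgrades this to the equality $\Ker \Delta = \cX^0(A(K^\pm))$, which is used throughout.

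For the direct sum $\RR^n = \cV \oplus \Delta^{-1}(\cV^\perp)$, I would show triviality of the intersection first: any $x$ in both satisfies $x^\top \Delta x = 0$, so $\Delta \geq 0$ gives $x \in \Ker \Delta \subset \cX^0(A(K^-))$, and since $\cV \subset \cX^+(A(K^-))$, the two spectral subspaces meet only at $0$. A dimension count using $\dim \Delta^{-1}(\cV^\perp) = \dim \Ker \Delta + \dim(\Im \Delta \cap \cV^\perp)$, the identity $\Im \Delta = (\Ker \Delta)^\perp$, and $\Ker \Delta \cap \cV = 0$ then yields $\dim \cV + \dim \Delta^{-1}(\cV^\perp) = n$.

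Next I would study $K := \gamma(\cV) = K^+ - \Delta P_\cV$. Symmetry reduces to $P_\cV^\top \Delta = \Delta P_\cV$, which is immediate from testing on each summand of the direct sum. Writing $A(K) = A(K^+) + BB^\top \Delta P_\cV$, its restriction equals $A(K^-)$ on $\cV$ and $A(K^+)$ on the complementary subspace; here one checks that $\Delta^{-1}(\cV^\perp)$ is $A(K^+)$-invariant using the Sylvester identity and the $A(K^-)$-invariance of $\cV$: for $x \in \Delta^{-1}(\cV^\perp)$, $\Delta A(K^+) x = -A(K^-)^\top \Delta x \in \cV^\perp$ because $A(K^-) \cV \subset \cV$. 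Substituting this block form into $\phi$ and using the ARE for $K^\pm$ block-by-block yields $\phi(K) = 0$. Reading off the spectrum block-by-block then gives $\cX^+(A(K)) = \cV$ (since $A(K^-)$ restricted to $\cV \subset \cX^+(A(K^-))$ has spectrum in $\CC^+$), $\cX^0(A(K)) = \Ker \Delta$ (using $\Ker \Delta = \cX^0(A(K^+))$ and $\Ker \Delta \subset \Delta^{-1}(\cV^\perp)$), and $\cX^-(A(K)) = \cX^-(A(K^+)) \cap \Delta^{-1}(\cV^\perp)$.

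Finally, for the bijection, injectivity is immediate because $\cV$ is recovered from $\gamma(\cV)$ as $\cX^+(A(\gamma(\cV)))$. For surjectivity, given $K \in \p\Gamma$, I would set $\cV := \cX^+(A(K))$ and argue $\cV \in \Omega$; the delicate point is to show $\cV \subset \cX^+(A(K^-))$ and that $\cV$ is $A(K^-)$-invariant. The approach is to complete the square around $K^-$ in $\phi(K) - \phi(K^-) = 0$ to obtain $A(K^-)^\top (K - K^-) + (K - K^-) A(K^-) = (K - K^-) BB^\top (K - K^-)$, and then use $K - K^- \geq 0$ together with controllability and the spectral description of Theorem~\ref{thm:extremal}(ii) to pin down the spectral subspaces of $A(K)$ relative to those of $A(K^\pm)$. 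Once the inclusion is established, $\gamma(\cV) = K$ follows from the uniqueness of an ARE solution with a prescribed $\cX^+(A(\cdot))$. The chief obstacle in the entire argument is exactly this Lyapunov-type comparison, which also underlies the identification $\Ker \Delta = \cX^0(A(K^\pm))$ in the first paragraph; controllability enters through the non-degeneracy of the Lyapunov equations for $\Delta$ and for $K - K^-$.
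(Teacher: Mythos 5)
This statement is quoted by the paper from \cite{TRE89} (originally Willems \cite{WIL71}; see also \cite{LAN95}) and is not proved in the paper itself, so your sketch can only be measured against the classical argument --- which it largely reconstructs correctly. The Sylvester identity $A(K^+)^{\top}\Delta + \Delta A(K^-) = 0$, the triviality of $\cV \cap \Delta^{-1}(\cV^{\perp})$ plus the dimension count, the symmetry of $\gamma(\cV)$, the block verification that $\phi(\gamma(\cV)) = 0$, and the reading-off of the three spectral subspaces are all sound, and you correctly identify controllability as the ingredient behind $\Ker(\Delta) = \cX^0(A(K^{\pm}))$; the ``short Lyapunov argument'' you allude to can indeed be completed (from $A(K^+)^{\top}\Delta + \Delta A(K^+) = -\Delta BB^{\top}\Delta$, monotonicity and nonnegativity of $x^{\top}\Delta x$ along the flow restricted to $\cX^0(A(K^+))$ give $B^{\top}\Delta x = 0$ there; then $\Delta\,\cX^0(A(K^+))$ is $A(K^+)^{\top}$-invariant and contained in $\Ker(B^{\top})$, hence zero by controllability). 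One small omission: before invoking the spectral constraints of Theorem~\ref{thm:extremal}(ii) on $\Ker(\Delta)$ you must note that $\Ker(\Delta)$ is $A(K^{\pm})$-invariant (one line from your Sylvester identity); mere agreement of $A(K^+)$ and $A(K^-)$ on a non-invariant subspace would not place it inside $\cX^0$.

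The genuine gap is at the end of surjectivity. After setting $\cV := \cX^+(A(K))$ for a given $K \in \p\Gamma$ and arguing $\cV \in \Omega$, you conclude that ``$\gamma(\cV) = K$ follows from the uniqueness of an ARE solution with a prescribed $\cX^+(A(\cdot))$.'' That uniqueness is not available at this point: it is precisely the injectivity of the assignment $K \mapsto \cX^+(A(K))$ on all of $\p\Gamma$, which together with what you have already shown is equivalent to the bijectivity being proved; your earlier injectivity argument only shows the opposite direction, namely that $\gamma(\cV)$ determines $\cV$. To close the loop you must show directly that $K$ agrees with $K^-$ on $\cV$ and with $K^+$ on $\cX^0(A(K)) \oplus \cX^-(A(K))$, and that this last subspace equals $\Delta^{-1}(\cV^{\perp})$. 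The tools are the ones you already introduced: the two-solution identity $A(K_1)^{\top}(K_2 - K_1) + (K_2 - K_1)A(K_2) = 0$ applied to the pairs $(K^-,K)$ and $(K^+,K)$; the fact that $XM = NX$ with $\sigma(M) \cap \sigma(N) = \emptyset$ forces $X = 0$, which handles $\cX^+(A(K))$ and $\cX^-(A(K))$; the same controllability/unobservable-subspace argument as above on $\cX^0(A(K))$, where the spectra are not disjoint; and finally the symmetry of $K$ to get $\Delta\bigl(\cX^0(A(K)) \oplus \cX^-(A(K))\bigr) \subset \cV^{\perp}$. Without this step the surjectivity half, and hence the bijection, is not established.
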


An application of Theorem~\ref{thm:geoAREproj} is the main result of \cite{TRE89b}, which provides a solution 
of the \LQCP when $(A,B)$ is controllable. To state the sufficient condition for well-posedness, an additional definition is needed from \cite{TRE89b}: for a given subspace $\cL \subset \RR^n$ and symmetric matrix $K \in \RR^{n \times n}$, $K$ is said to be {\em negative semidefinite on $\cL$} if for all $x \in \cL$, $x^{\top}K x \leq 0$, and $ x^{\top}K x = 0$ if and only if $Kx = 0$. Notice that $K \leq 0$ implies that for all $\cL \subset \RR^n$, $K$ is negative semidefinite on $\cL$. To see this, fix $\cL \subset \RR^n$ and note that $K \leq 0$ implies that there exists $H \in \RR^{p \times n}$ for some $p$ such that $K = -H^{\top}H$. Then for all $x \in \cL \subset \RR^n$, obviously $x^{\top}K x \leq 0$, $Kx = 0$ implies $x^{\top}Kx = 0$, and
\begin{equation} \label{eq:negsemidefquadform}
x^{\top} K x = - (Hx)^{\top}(Hx) = 0 \;\; \Leftrightarrow \;\;
Hx = 0 \;\; \Rightarrow \;\; -H^{\top}(Hx) = Kx = 0.
\end{equation}

\begin{theorem}[Theorem 4.1, \cite{TRE89b}]
\label{thm:LQCPcontrollable}
Let $(A,B)$ be controllable. Assume $\p \Gamma \neq \emptyset$ and $K^-$ is negative semidefinite on $\cL$.
Then we have
\begin{enumerate}
\item[(i)]
For all $x_0 \in \RR^n$, $V_{\cL}(x_0)$ is finite.
\item[(ii)]
For all $x_0 \in \RR^n$, $V_{\cL}(x_0) = x_0^{\top} \Ks x_0$, where $\Ks := \gamma(\cN(\cL))$ and
$\cN(\cL) := \langle \cL \cap \Ker( K^-) \; | \; A(K^-) \rangle \cap \cX^+(A(K^-))$.
\item[(iii)]
For all $x_0 \in \RR^n$, there exists an optimal input $u^{\star}$ if and only if $\Ker(\Delta) \subset \cL \cap \Ker(K^-)$.
\item[(iv)]
If $\Ker(\Delta) \subset \cL \cap \Ker(K^-)$, then for each $x_0 \in \RR^n$, there exists exactly one optimal
input $u^{\star}$, and it is given by the feedback $u^{\star} = -B^{\top} \Ks x$.

\end{enumerate}
\end{theorem}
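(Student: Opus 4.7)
The plan is to combine the standard Riccati completing-the-square identity with the geometric description of $\partial\Gamma$ from Theorem~\ref{thm:geoAREproj}. First I would derive the dissipation identity: for any $K\in\partial\Gamma$ and any $u\in L_{2,loc}^m(\RR^+)$ producing trajectory $x(\cdot\,;x_0,u)$, differentiating $t\mapsto x(t)^\top K x(t)$ and using $A^\top K+KA = -Q+KBB^\top K$ gives
\begin{equation*}
J_T(x_0,u) \;=\; x_0^\top K x_0 \;-\; x(T)^\top K x(T) \;+\; \int_0^T \bigl\| u(t) + B^\top K x(t)\bigr\|^2\,dt.
\end{equation*}
I would specialize to $K=\Ks:=\gamma(\cN(\cL))$. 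Theorem~\ref{thm:geoAREproj} then yields $\cX^+(A(\Ks))=\cN(\cL)$, $\cX^0(A(\Ks))=\Ker(\Delta)$, and $\cX^-(A(\Ks))=\cX^-(A(K^+))\cap\Delta^{-1}(\cN(\cL)^\perp)$; moreover $\Ks$ restricts to $K^-$ on $\cN(\cL)$ and hence vanishes there, since $\cN(\cL)\subset\Ker(K^-)$ by construction, while on $\Delta^{-1}(\cN(\cL)^\perp)$ it restricts to $K^+$.

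For the lower bound I would take an arbitrary $u\in\cU_\cL(x_0)$ and let $T\to\infty$ in the identity. The objective is to prove $\liminf_{T\to\infty}x(T)^\top \Ks x(T) \le 0$, which yields $J(x_0,u)\ge x_0^\top \Ks x_0$ and hence $V_\cL(x_0)\ge x_0^\top \Ks x_0$. This is the main technical obstacle: admissibility only forces $d_\cL(x(T))\to 0$, but the component of $x(T)$ along $\Delta^{-1}(\cN(\cL)^\perp)$ need not approach $\cL$. I would split $x(T)$ according to the decomposition $\RR^n=\cN(\cL)\oplus\Delta^{-1}(\cN(\cL)^\perp)$ and use the hypothesis that $K^-$ is negative semidefinite on $\cL$, together with the maximality of $\cN(\cL)$ as an $A(K^-)$-invariant subspace of $\cL\cap\Ker(K^-)$ inside $\cX^+(A(K^-))$, to bound the $\Ks$-form on $x(T)$ by the $L^2$-integral on the right-hand side of the identity plus a term vanishing with $d_\cL(x(T))$; a standard subsequence argument then produces the desired $\liminf$ estimate.

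For the matching upper bound and for attainability I would use the natural feedback $u^\star(t) = -B^\top \Ks x(t)$ modified by a suitable perturbation. Under the feedback alone the closed-loop flow $e^{A(\Ks)t}x_0$ decomposes into three spectral pieces: the $\cX^-$ piece decays exponentially, the $\cX^+=\cN(\cL)\subset\cL$ piece remains in $\cL$, and the $\cX^0=\Ker(\Delta)$ piece may persist. Using controllability of $(A,B)$ I would, on a finite interval, add a small perturbation $v_\epsilon$ that transfers the $\Ker(\Delta)$ component into $\cN(\cL)\subset\cL$; the resulting control lies in $\cU_\cL(x_0)$ and the dissipation identity yields $J(x_0,u_\epsilon)\le x_0^\top \Ks x_0 + \epsilon$, completing (i) and (ii). For (iii)--(iv), equality $J(x_0,u) = x_0^\top \Ks x_0$ in the identity forces both $u=-B^\top \Ks x$ almost everywhere (which gives the uniqueness and feedback form in (iv)) and $\lim_{T\to\infty}x(T)^\top \Ks x(T)=0$ jointly with $x(T)\to\cL$. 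Analyzing the closed-loop spectral decomposition subspace by subspace, these two requirements hold for every $x_0\in\RR^n$ exactly when $\Ker(\Delta)\subset\cL$ and $\Ker(\Delta)\subset\Ker(K^-)$, i.e., $\Ker(\Delta)\subset\cL\cap\Ker(K^-)$, which is precisely condition (iii).
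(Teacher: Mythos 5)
First, note that the paper does not prove this statement at all: it is imported verbatim as Theorem 4.1 of \cite{TRE89b} (Soethoudt--Trentelman), so your attempt can only be measured against that original proof. Its skeleton --- the completion-of-squares identity for $K \in \p\Gamma$ combined with the parametrization $\Ks = \gamma(\cN(\cL))$ from Theorem~\ref{thm:geoAREproj} --- is indeed what you have identified, and your structural observations are correct: $\cN(\cL) \subset \Ker(K^-)$, $\Ks$ vanishes on $\cN(\cL)$, and $\Ks$ agrees with $K^+$ on $\Delta^{-1}(\cN(\cL)^{\perp})$.

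As a proof, however, the proposal has genuine gaps exactly where the original argument is hard. (1) The lower bound: you propose to obtain $\liminf_{T\to\infty} x(T)^{\top}\Ks x(T) \le 0$ for every $u \in \cU_{\cL}(x_0)$ by ``bounding the $\Ks$-form on $x(T)$ by the $L^2$-integral plus a term vanishing with $d_{\cL}(x(T))$,'' but this is asserted rather than derived, and it is not clear it is true in that form. The hypothesis that $K^-$ is negative semidefinite \emph{on} $\cL$ controls $x^{\top}K^-x$ only for $x$ exactly in $\cL$, whereas an admissible trajectory merely satisfies $d_{\cL}(x(T)) \to 0$ and is in general unbounded (the $\cN(\cL)$-component typically grows exponentially); writing $x(T) = \ell(T) + e(T)$ with $\ell(T) \in \cL$ and $e(T) \to 0$ leaves an uncontrolled cross term $2\,\ell(T)^{\top}K^-e(T)$. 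Moreover the form that actually appears is that of $\Ks$, which equals $K^+$ on $\Delta^{-1}(\cN(\cL)^{\perp})$ and is not negative semidefinite on $\cL$, so the hypothesis cannot be applied to it directly; this step occupies several lemmas in \cite{TRE89, TRE89b} and cannot be closed by ``a standard subsequence argument.'' (2) The upper bound: that a small perturbation ``transfers the $\Ker(\Delta)$ component into $\cN(\cL)$'' at cost $\epsilon$ rests on the specific fact that initial states in $\Ker(\Delta)$, where $K^+ = K^-$, can be steered with cost arbitrarily close to their $K^+$-cost (Willems' fixed-endpoint result \cite{WIL71}), together with an argument for why the cost of the superposed control splits across components, since the cost is not additive; none of this is supplied. (3) For (iii)--(iv) you assert that $J(x_0,u) = x_0^{\top}\Ks x_0$ forces both $u = -B^{\top}\Ks x$ and $x(T)^{\top}\Ks x(T) \to 0$; the identity alone only says the sum of the two terms tends to zero, and separating them is itself a nontrivial result (it is precisely Theorem~\ref{thm:attainPrep}(iii), which the paper quotes from Geerts/Trentelman and whose proof again uses the lower-bound machinery). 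The necessity direction of (iii) further requires showing that closed-loop trajectories starting in $\Ker(\Delta)$, whose spectrum lies on the imaginary axis, cannot satisfy $d_{\cL}(x(T)) \to 0$ and $K^-x(T) \to 0$ unless $\Ker(\Delta) \subset \cL \cap \Ker(K^-)$ --- compare the $D_2 = 0$ step in the paper's proof of Theorem~\ref{thm:attainable}. In short: right skeleton, but the analytically substantive steps are missing.
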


This paper can be regarded as a generalization of the previous result to the stabilizable case. That is, we require weaker assumptions for the sufficient condition of well-posedness to be able to provide the form of the value function, necessary and sufficient conditions for attainability, and the form of the optimal control. Our new assumptions involve the stabilizability of $(A,B)$ rather than controllability, and the existence of a negative semidefinite solution to the ARI rather than imposing that specifically $K^-$, a solution to the ARE, is negative semidefinite on $\cL$. Because necessary and sufficient conditions for well-posedness are still an open problem, note that we have not attempted to generalize our second condition in terms of the existence of an ARI solution that is negative semidefinite on $\cL$. Regardless, the main technical obstacle
is that there is no direct generalization of Theorem~\ref{thm:geoAREproj} to the stabilizable case; indeed the minimal 
solution $K^-$ may not exist in this case.

Now supposing that $(A,B)$ is stabilizable, we can write the system \eqref{eq:sysgeneral} in the Kalman controllability decomposition. Let $\mathcal{C} = \langle A \; | \; \Im(B) \rangle \subset \RR^n$ 
be the controllable subspace with dimension $n_1 \leq n$. Also, let $\cX_2$ be any complement such that 
\begin{equation} \label{eq:KalmanDecomp}
\RR^n = \cC \oplus \cX_2. 
\end{equation}
\noindent Then the system matrices have the block form: 
\begin{equation} 
\label{eq:sysctrdecomp}
A = \begin{bmatrix} A_1 & A_{12} \\ 0 & A_2 \end{bmatrix}, \qquad 
B = \begin{bmatrix} B_1 \\ 0 \end{bmatrix}.
\end{equation}
It can be shown that coordinate transformations only affect the solutions $K \in \p \Gamma$ of the \LQCP (in any endpoint case) up to a 
congruent transformation, so there is no loss of generality to assume that $(A,B)$ already has the form \eqref{eq:sysctrdecomp}. 
If we write the symmetric matrices $Q$ and $K$ in block form
\begin{equation}
\label{eq:QKblock}
Q       = \begin{bmatrix} Q_1 & Q_{12} \\ Q_{12}^{\top} & Q_2 \end{bmatrix} \,, \quad  
K       = \begin{bmatrix}K_1 & K_{12} \\ K_{12}^{\top} & K_2 \end{bmatrix} \,, 
\end{equation}
then $\phi(K)$ also can be decomposed in block form:
\begin{equation}
\label{eq:phiblock}
\phi(K) = \begin{bmatrix} \phi_1(K_1) & A_1(K_1)^{\top}K_{12} + K_{12}A_2 + K_1A_{12} + Q_{12} \\ 
 * & A_2^{\top}K_2 + K_2A_2 + K_{12}^{\top}A_{12} + A_{12}^{\top}K_{12} + Q_2 - K_{12}^{\top}B_1B_1^{\top}K_{12} 
          \end{bmatrix} \,.
\end{equation}
\noindent We note that $\phi(K)$ is symmetric, and $\phi_1(K_1)$ is defined below in \eqref{eq:ARE_1}. Let 
\begin{equation}
A_1(K_1) := A_1 - B_1 B_1^{\top} K_1.
\end{equation} 
\noindent Then $\phi(K) = 0$ gives rise to three equations 
\begin{align}
\phi_1(K_1) := A_1^T K_1 + K_1 A_1 + Q_1 - K_1 B_1 B_1^{\top} K_1 &= 0, 
\label{eq:ARE_1} \\
A_1(K_1)^{\top}K_{12} + K_{12}A_{2} &= -(Q_{12} + K_1A_{12}), 
\label{eq:ARE_12} \\
A_2^{\top}K_2 + K_2A_2 &= K_{12}^{\top}B_1B_1^{\top}K_{12} - K_{12}^{\top}A_{12} - A_{12}^{\top}K_{12} - Q_2 \,.
\label{eq:ARE_2}
\end{align}
The first equation \eqref{eq:ARE_1} is a quadratic equation with $(A_1,B_1)$ controllable. 
Its solutions $K_1$ are decoupled from $K_{12}$ and $K_2$, so this lower order ($n_1 \times n_1$) ARE equation can be solved first. The relevant solution sets are denoted as:
\begin{align*}
\Gamma_1 &:= \{K_1 \in \RR^{n_1 \times n_1} ~|~  K_1^{\top} = K_1, \; \phi_1(K_1) \ge 0 \},\\ 
\p \Gamma_1  &:= \{K_1 \in \RR^{n_1 \times n_1}  ~|~  K_1^{\top} = K_1, \; \phi_1(K_1) = 0 \}, \\
\Gamma_{1-} &:= \{K_1 \in \Gamma_1 ~|~  K_1 \le 0 \},\\ 
\p \Gamma_{1-} &:= \{K_1 \in \p \Gamma_1 ~|~  K_1 \le 0 \}. 
\end{align*}
\noindent Using any solution $K_1 \in \p \Gamma_1$, if it exists, \eqref{eq:ARE_12} is a linear (Sylvester) equation for $K_{12}$ which may 
have no solutions, infinitely many solutions, or a unique solution. 
The third equation \eqref{eq:ARE_2} is also a linear (Sylvester) equation. Using any solution $K_{12}$, 
if it exists, gives a unique solution to $K_2$. To see this, recall that if $M_1 \in \RR^{n_1 \times n_1}$, $M_2 \in \RR^{n_2 \times n_2}$, and $M_3 \in \RR^{n_1 \times n_2}$ are given matrices, then the Sylvester equation $M_1 X + X M_2 = M_3$ has a unique solution $X \in \RR^{n_1 \times n_2}$ exactly when $\sigma(M_1) \cap \sigma(-M_2) = \emptyset$ \cite{GANTMACHER}.  Because stabilizability of $(A, B)$ implies $\sigma(A_2) \subset \CC^-$, then by applying the Sylvester solvability criteria to \eqref{eq:ARE_2}, we have that $\sigma(A_2^{\top}) \cap \sigma(-A_2) = \emptyset$, and so $K_2$ is unique for any given $K_{12}$.

In preparation for characterizing the existence and form of the value function analogously to Theorem \ref{thm:LQCPcontrollable} (i) and (ii), we consider existence of extremal solutions in $\p \Gamma$. It is known that when 
$(A,B)$ is stabilizable, then the maximal solution $K^+ \in \p \Gamma$ exists,
whereas the minimal solution $K^-$ may not exist. 

\begin{theorem}[Theorem 2.1, \cite{GOH86}; Theorem 7.9.3, p. 195, \cite{LAN95}]
\label{thm:gohberg}
Suppose $(A,B)$ is stabilizable and $\p \Gamma \neq \emptyset$. Then the unique maximal solution $K^+ \in \p \Gamma$ 
exists. Moreover, $\sigma(A(K^+)) \subset \CC^- \cup \CC^0$. 
\end{theorem}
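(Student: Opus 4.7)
The strategy is to exploit the Kalman controllability decomposition \eqref{eq:sysctrdecomp} already introduced in the preliminaries and reduce the existence half of the claim to the controllable case of Theorem~\ref{thm:extremal}, which is available on the $(A_1,B_1)$ subsystem. The observation that makes the reduction work is that stabilizability pins $\sigma(A_2) \subset \CC^-$ strictly inside the open left half-plane, whereas $A_1(K_1^+)$ for the maximal controllable solution lies in $\CC^- \cup \CC^0$, so the two spectra are automatically disjoint.

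First, I would note that if $K \in \p\Gamma$, its $(1,1)$ block $K_1$ lies in $\p\Gamma_1$, so $\p\Gamma \neq \emptyset$ forces $\p\Gamma_1 \neq \emptyset$. Applying Theorem~\ref{thm:extremal}(i)--(ii) to the controllable pair $(A_1,B_1)$ produces a unique maximal $K_1^+ \in \p\Gamma_1$ with $\sigma(A_1(K_1^+)) \subset \CC^- \cup \CC^0$. Next, taking $K_1 = K_1^+$ in the Sylvester equation \eqref{eq:ARE_12}, the spectral separation $\sigma(A_1(K_1^+)^{\top}) \cap \sigma(-A_2) = \emptyset$ (first set in $\CC^- \cup \CC^0$, second in $\CC^+$) gives a unique $K_{12}^+$; then \eqref{eq:ARE_2}, as already remarked in the excerpt, uniquely determines $K_2^+$. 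Assembling the blocks per \eqref{eq:QKblock} yields $K^+ \in \p\Gamma$. The block triangularity
\begin{equation*}
A(K^+) \;=\; \begin{bmatrix} A_1(K_1^+) & A_{12} - B_1 B_1^{\top} K_{12}^+ \\ 0 & A_2 \end{bmatrix}
\end{equation*}
then gives $\sigma(A(K^+)) = \sigma(A_1(K_1^+)) \cup \sigma(A_2) \subset \CC^- \cup \CC^0$, settling the spectral claim.

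The remaining task, and the main technical obstacle, is to prove that this $K^+$ is actually the maximum of $\p\Gamma$. For arbitrary $K \in \p\Gamma$, setting $\Delta := K^+ - K$ and subtracting $\phi(K) = \phi(K^+) = 0$ yields the Lyapunov-type identity
\begin{equation*}
A(K^+)^{\top}\Delta + \Delta A(K^+) + \Delta B B^{\top} \Delta \;=\; 0 \,.
\end{equation*}
Along the trajectory $y(t) = e^{A(K^+)t}x_0$ this gives $\tfrac{d}{dt}\bigl(y(t)^{\top}\Delta y(t)\bigr) = -\|B^{\top}\Delta y(t)\|^2$, i.e. the quadratic form $y^{\top}\Delta y$ is monotonically non-increasing, and integration yields
\begin{equation*}
x_0^{\top}\Delta x_0 \;=\; y(T)^{\top}\Delta y(T) + \int_0^T \|B^{\top}\Delta y(t)\|^2 \, dt \,.
\end{equation*}
For $x_0 \in \cX^-(A(K^+))$ the boundary term decays to zero as $T\to\infty$ and we conclude $x_0^{\top}\Delta x_0 \geq 0$ by standard Lyapunov reasoning.

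The delicate case is $x_0 \in \cX^0(A(K^+))$, where $y(t)$ is merely bounded (or polynomially growing along Jordan chains) and does not decay. The plan here is to exploit boundedness together with the monotonicity of $y^{\top}\Delta y$ to force $\int_0^{\infty}\|B^{\top}\Delta y(t)\|^2\,dt < \infty$, whence $B^{\top}\Delta$ annihilates the $A(K^+)$-invariant subspace $\cX^0(A(K^+))$; the identity then collapses to the homogeneous Sylvester relation $A(K^+)^{\top}\Delta + \Delta A(K^+) = 0$ on that subspace, which, since the restriction has purely imaginary spectrum, combined with $\Delta_1 \geq 0$ on the $(1,1)$ block (inherited from Theorem~\ref{thm:extremal} applied to $K_1 \le K_1^+$), pins down the sign of $\Delta$ there as well. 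This handling of the $\CC^0$ part is the crux of the proof; once $\Delta \geq 0$ is established, maximality is immediate, and uniqueness follows by antisymmetry of the partial order on symmetric matrices.
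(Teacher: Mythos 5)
Your construction of the candidate $K^+$ is sound: since the $(1,1)$ block of any $K \in \p\Gamma$ lies in $\p\Gamma_1$, Theorem~\ref{thm:extremal} applied to the controllable pair $(A_1,B_1)$ gives $K_1^+$ with $\sigma(A_1(K_1^+)) \subset \CC^- \cup \CC^0$, and the spectral separation from $\sigma(-A_2) \subset \CC^+$ makes \eqref{eq:ARE_12} and \eqref{eq:ARE_2} uniquely solvable, yielding a solution $K^+ \in \p\Gamma$ with $\sigma(A(K^+)) = \sigma(A_1(K_1^+)) \cup \sigma(A_2) \subset \CC^- \cup \CC^0$ (and, by the same reasoning you use, this is the \emph{unique} solution with that spectral property). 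Note, for calibration, that the paper does not prove this theorem at all: it is quoted from Gohberg--Lancaster--Rodman and Lancaster--Rodman, where maximality is established by a quite different route (a Newton/Kleinman-type iteration of Lyapunov equations producing a monotonically decreasing sequence bounded below by any element of $\Gamma$).

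The genuine gap is exactly at the point you flag as the crux: maximality. Your Lyapunov identity $A(K^+)^{\top}\Delta + \Delta A(K^+) + \Delta BB^{\top}\Delta = 0$ and the integral formula are correct, and the argument on $\cX^-(A(K^+))$ works, but the rest does not go through as sketched. (a) On $\cX^0(A(K^+))$ the trajectory may grow polynomially (Jordan chains), and then monotonicity of $y^{\top}\Delta y$ gives no upper bound on $\int_0^T \|B^{\top}\Delta y\|^2\,dt$, since the boundary term may tend to $-\infty$; even when $\int_0^{\infty}\|B^{\top}\Delta y\|^2\,dt < \infty$, deducing $B^{\top}\Delta y \equiv 0$ needs an almost-periodicity argument you do not supply. (b) Even granting that, the homogeneous relation $A(K^+)^{\top}\Delta + \Delta A(K^+) = 0$ on a subspace where the restriction has purely imaginary spectrum does \emph{not} pin down the sign of $\Delta$ (for a rotation matrix $A$, both $\Delta = I$ and $\Delta = -I$ solve it), so "pins down the sign of $\Delta$ there as well" is an assertion, not a proof; to use $\Delta_1 \geq 0$ from the controllable case you would at least need the observation that $\cX^0(A(K^+)) \subset \cC$ because $\sigma(A_2) \subset \CC^-$, which you never make. (c) Most importantly, establishing $x_0^{\top}\Delta x_0 \geq 0$ separately for $x_0 \in \cX^-(A(K^+))$ and $x_0 \in \cX^0(A(K^+))$ does not yield $\Delta \geq 0$ on $\RR^n = \cX^-(A(K^+)) \oplus \cX^0(A(K^+))$: a symmetric matrix can be nonnegative on two complementary subspaces and still be indefinite, so the cross terms must be controlled and your outline says nothing about them. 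As it stands the proposal proves existence and the spectral inclusion, but not the maximality claim, which is the substantive content of the theorem.
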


To obtain a generalization of Theorem~\ref{thm:LQCPcontrollable} to the stabilizable case, one of the major steps in the 
sequel is to apply Theorem~\ref{thm:LQCPcontrollable} to the controllable subsystem $(A_1,B_1)$ and its ARE \eqref{eq:ARE_1}. 
Theorem~\ref{thm:LQCPcontrollable} requires that the minimal solution $K_1^-$ of \eqref{eq:ARE_1} exists and is negative 
semidefinite on $\cL$ within the controllable subspace. The following lemma provides for the existence
of this minimal, negative semidefinite solution. 

\begin{lemma}
\label{lem:wellposedgivesnegsdefK1}
Suppose $(A,B)$ is stabilizable, $\Gamma_- \neq \emptyset$, and the state space is decomposed as in \eqref{eq:KalmanDecomp}. Then the minimal solution 
$K_1^- \in \p \Gamma_{1-}$ exists. 
\end{lemma}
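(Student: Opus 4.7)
The plan is to reduce to the controllable subsystem $(A_1,B_1)$ and then invoke Theorem~\ref{thm:extremal}(i). The key observation is that any element of $\Gamma_-$ supplies, via its $(1,1)$ block, an element of $\Gamma_{1-}$, which in turn makes $\Gamma_1$ nonempty so that the controllable-case theory applies.

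First, I would pick any $K \in \Gamma_-$ and write it in the block form \eqref{eq:QKblock} compatible with the Kalman decomposition \eqref{eq:KalmanDecomp}. Because $K = K^\top \leq 0$, every principal submatrix of $K$ is negative semidefinite; in particular the $(1,1)$ block $K_1$ satisfies $K_1 = K_1^\top \leq 0$. Similarly, $\phi(K) \geq 0$ is a symmetric positive semidefinite matrix, so its $(1,1)$ principal block is positive semidefinite, and from \eqref{eq:phiblock} this $(1,1)$ block is exactly $\phi_1(K_1)$. Therefore $\phi_1(K_1) \geq 0$, i.e., $K_1 \in \Gamma_{1-}$. In particular, $\Gamma_1 \neq \emptyset$.

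Next, since $(A_1,B_1)$ is controllable and $\Gamma_1 \neq \emptyset$, Theorem~\ref{thm:extremal}(i) guarantees that the minimal element of $\Gamma_1$ exists and actually lies in $\partial \Gamma_1$; denote it by $K_1^-$. By minimality on $\Gamma_1$ applied to the element $K_1$ constructed above,
$$K_1^- \leq K_1 \leq 0,$$
so $K_1^- \in \partial \Gamma_{1-}$. Finally, since $\partial \Gamma_{1-} \subset \Gamma_1$ and $K_1^-$ is minimal on $\Gamma_1$, it is a fortiori minimal on $\partial \Gamma_{1-}$, which is the required conclusion.

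There is no real obstacle in this argument once the reduction is identified; the only point that requires a moment of care is the passage from the $n\times n$ inequality $\phi(K) \geq 0$ to the scalar-block inequality $\phi_1(K_1) \geq 0$. This rests on two simple facts: principal submatrices of symmetric positive semidefinite matrices are positive semidefinite, and, as displayed in \eqref{eq:phiblock}, the $(1,1)$ block of $\phi(K)$ in the controllability decomposition depends only on $K_1$ and equals $\phi_1(K_1)$. Once that observation is in hand, Theorem~\ref{thm:extremal}(i) does all the remaining work.
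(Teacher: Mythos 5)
Your proof is correct and follows essentially the same route as the paper: extract the $(1,1)$ block $K_1$ of an element $K \in \Gamma_-$, observe $K_1 \leq 0$ and $\phi_1(K_1) \geq 0$ so $K_1 \in \Gamma_{1-}$, then apply Theorem~\ref{thm:extremal}(i) to the controllable pair $(A_1,B_1)$ and conclude $K_1^- \leq K_1 \leq 0$. The only cosmetic difference is that you justify the block inequalities by the elementary principal-submatrix property of semidefinite matrices, whereas the paper invokes Theorem~\ref{thm:schur}; the two are interchangeable here.
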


\begin{proof}
Let $K \in \Gamma_-$ so that $\phi(K) \geq 0$ and $K \leq 0$.  Consider $K$, $Q$, and $\phi(K)$ in block form \eqref{eq:QKblock}-\eqref{eq:phiblock}. Applying Theorem~\ref{thm:schur} to both $K$ and $\phi(K)$, 
we obtain $\phi_1(K_1) \geq 0$ and $K_1 \leq 0$, which implies $K_1 \in \Gamma_{1-} \neq \emptyset$. Since also 
$(A_1,B_1)$ is controllable, we can apply Theorem~\ref{thm:extremal}(i) to conclude $K_1^+, K_1^- \in \Gamma_1$, 
the maximal and minimal solutions, exist. Moreover $\p \Gamma_1 \neq \emptyset$ and its maximal and minimal 
elements are precisely $K_1^+$ and $K_1^-$. Because $K_1 \leq 0$, $K_1^- \in \Gamma_1$ is minimal, and $K_1, K^-_1 \in \Gamma_1$, 
we have that $K_1^- \leq K_1 \leq 0$. That is, $K^-_1 \in \p \Gamma_{1-}$, as desired. 
\end{proof} 

\section{Solution of the \LQCP}
\label{sec:main}

In this section we present the solution of the \LQCP. That is, we give sufficient conditions for well-posedness, the form of the value function, necessary and sufficient conditions for attainability, and form of the optimal control. We assume that $\cL \subset \RR^n$ is a given subspace. Well-posedness and the form of the value function are addressed through the following sufficient condition, which are also found in \cite{GEE89, GEE91}.

\begin{assum} 
\label{assum:wellposed}
We assume that $(A,B)$ is stabilizable and $\Gamma_- \neq \emptyset$.
\end{assum}

The following theorem states that the value function is given in terms of a quadratic form of a particular solution to the ARE.
\begin{theorem}
[Theorem 2.1 \cite{GEE89}, Lemma 5 \cite{MOL77}] 
\label{thm:valuefunction}
Consider the \LQCP and suppose Assumption~\ref{assum:wellposed} holds. 
Then there exists a unique $\Ks \in \p \Gamma$ such that for all $x_0 \in \RR^n$, 
$V_{\cL}(x_0) = x_0^{\top} \Ks x_0$. 
\end{theorem}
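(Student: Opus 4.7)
The plan is to combine a Molinari-type dissipation identity (for the lower bound on the value function) with a stabilizability-based construction (for the upper bound), and then to extract the quadratic form of $V_\cL$ via the Kalman decomposition together with the controllable-case theorem. Uniqueness and membership in $\p\Gamma$ come as final verifications.

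First, I would derive the completion-of-squares identity: for any symmetric $K$, any $u \in L_{2,loc}^m(\RR^+)$, and any $T\geq 0$,
\begin{equation*}
J_T(x_0, u) \;=\; x_0^\top K x_0 \;-\; x(T)^\top K x(T) \;+\; \int_0^T \bigl[\,x(t)^\top \phi(K) x(t) \;+\; \|u(t) + B^\top K x(t)\|^2\,\bigr]\,dt.
\end{equation*}
Taking $K \in \Gamma_-$ (nonempty by Assumption~\ref{assum:wellposed}), both $\phi(K)\geq 0$ and $-x(T)^\top K x(T)\geq 0$, so $J_T(x_0,u)\geq x_0^\top K x_0$ for every $T$ and every $u\in\cU_\cL(x_0)$; passing to the limit gives $V_\cL(x_0)\geq x_0^\top K x_0 > -\infty$. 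For the matching upper bound, stabilizability of $(A,B)$ furnishes a feedback $F$ with $A-BF$ Hurwitz; the input $u(t)=-Fx(t)$ drives $x(t)$ exponentially to $0\in\cL$, so $u\in\cU_\cL(x_0)$, and its cost equals $x_0^\top\bar K x_0$ for the unique $\bar K$ solving $(A-BF)^\top\bar K + \bar K(A-BF) + Q + F^\top F = 0$. Together, $V_\cL(x_0)\in\RR$ for every $x_0$.

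The technical heart is to show $V_\cL$ is a quadratic form. Working in the Kalman decomposition \eqref{eq:KalmanDecomp}--\eqref{eq:sysctrdecomp}, Lemma~\ref{lem:wellposedgivesnegsdefK1} produces $K_1^-\in\p\Gamma_{1-}$, and Theorem~\ref{thm:LQCPcontrollable} applied to the controllable subsystem $(A_1,B_1)$ with endpoint subspace $\cL\cap\cC$ yields a quadratic value function at that level. The uncontrollable state $x_2(t)=e^{A_2 t}x_2(0)$ decays exponentially, since stabilizability forces $\sigma(A_2)\subset\CC^-$, and enters the $x_1$-dynamics only as an absolutely integrable exogenous term. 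Using the unique solvability of the Sylvester equations \eqref{eq:ARE_12}--\eqref{eq:ARE_2} (guaranteed by $\sigma(A_2^\top)\cap\sigma(-A_2)=\emptyset$), the additional cost contributions attributable to $x_2$ assemble into purely quadratic and bilinear forms in the initial condition, so that $V_\cL(x_0)=x_0^\top \Ks x_0$ for some symmetric $\Ks\in\RR^{n\times n}$; uniqueness of such a symmetric $\Ks$ is immediate from polarization.

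Finally, I would verify $\Ks\in\p\Gamma$ by a Bellman/HJB argument: from $V_\cL(x_0)\leq \int_0^s\omega(x,u)\,dt + V_\cL(x(s))$, dividing by $s$, sending $s\downarrow 0$, and minimizing over $u(0)=v$ yields $\inf_v\{\,x_0^\top Q x_0 + v^\top v + 2x_0^\top \Ks(Ax_0+Bv)\,\}=0$, which rearranges to $x_0^\top\phi(\Ks)x_0=0$ for all $x_0$, hence $\phi(\Ks)=0$. The main obstacle is the third paragraph: without controllability Theorem~\ref{thm:geoAREproj} is not available, and the ambiguity described in the Introduction about \emph{which} ARE solution to select means the quadratic structure of $V_\cL$ must be extracted indirectly---through the Kalman decomposition and Lemma~\ref{lem:wellposedgivesnegsdefK1}---rather than read off from an explicit formula as in the controllable case.
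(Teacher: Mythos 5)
The paper does not actually prove this statement; it is quoted from Theorem 2.1 of \cite{GEE89} and Lemma 5 of \cite{MOL77}, so your sketch must be judged against those arguments. Your first two paragraphs are fine and uncontroversial: the completion-of-squares identity with any $K\in\Gamma_-$ gives $J_T(x_0,u)\ge x_0^{\top}Kx_0$ and hence a finite lower bound on $V_{\cL}$, and a stabilizing feedback gives a finite upper bound, so Assumption~\ref{assum:wellposed} yields well-posedness exactly as in the cited sources. Uniqueness of a symmetric $\Ks$ by polarization is also fine.

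The genuine gap is your third paragraph, which is precisely the content the citation is for. Asserting that ``the additional cost contributions attributable to $x_2$ assemble into purely quadratic and bilinear forms in the initial condition'' is not an argument: $V_{\cL}(x_0)$ is an infimum over $\cU_{\cL}(x_0)$, a class that depends on $x_0$ and carries the requirement that $\lim_{T\to\infty}J_T$ exist in $\RR^e$; the minimizing control depends jointly on $(x_{1,0},x_{2,0})$, so the $x_2$-driven terms cannot be treated as an exogenous additive correction, and the Sylvester equations \eqref{eq:ARE_12}--\eqref{eq:ARE_2} govern solutions of the ARE, not the value function. Indeed, identifying how $V_{\cL}$ depends on the cross blocks is what the paper does \emph{after} this theorem (Theorems~\ref{thm:LQCPKLconsis} and~\ref{thm:Ks}), using Theorem~\ref{thm:valuefunction} as an input, so your route is circular in spirit; the quadraticity of $V_{\cL}$ needs an argument of Molinari's type (homogeneity in $x_0$ together with parallelogram-law estimates for infima of quadratic functionals over the admissible class, with care about the existence-of-limit condition), which is exactly what Lemma 5 of \cite{MOL77} supplies. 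Separately, your Bellman verification as written only proves the ARI: the inequality $V_{\cL}(x_0)\le\int_0^s\omega\,dt+V_{\cL}(x(s))$ yields $x_0^{\top}\phi(\Ks)x_0\ge 0$, i.e.\ $\Ks\in\Gamma$; to conclude $\phi(\Ks)=0$ you also need the reverse dynamic-programming inequality obtained from $\epsilon$-optimal controls and time-consistency of $\cU_{\cL}$, plus a justification for interchanging $s\downarrow 0$ with the infimum (``minimizing over $u(0)=v$'' is not meaningful for measurable controls). That equality is the substance of Theorem 2.1 of \cite{GEE89}, so as it stands your proposal re-derives the easy bounds but leaves the two claims that make the theorem true unproven.
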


Next we turn to the form of $\Ks$. Our approach is to choose a suitable basis based on the Kalman controllability 
decomposition \eqref{eq:KalmanDecomp} and on Theorem~\ref{thm:geoAREproj}, following the same method in \cite{TRE89}. 
Then we systematically determine each of the blocks of $\Ks$. First we determine $\Kos$ using results from \cite{TRE89b}; 
second, we compute $\Kots$ assuming $\Kos$ is known; finally, we compute $\Kts$ assuming $\Kots$ is known. 
Now we give a more detailed roadmap on how the technical results are obtained.

The choice of $\Kos$ is resolved by applying Theorem~\ref{thm:LQCPcontrollable} to the controllable subsystem. 
We construct a smaller optimal control problem on the controllable subsystem. Intuitively, the smaller 
optimal control problem should be equivalent to the original \LQCP for initial conditions in the controllable subspace. 
After proving this equivalence, we apply Theorem~\ref{thm:LQCPcontrollable} to obtain $\Kos = \ol{K}_1$, where
$\ol{K}_1$ is defined in \eqref{eq:Kbar1} below.
Next, we fix the choice of $\Kos$ that solves \eqref{eq:ARE_1} and turn to the solution set of \eqref{eq:ARE_12}. 
Generally, this linear Sylvester equation may have an infinite number of solutions, making the choice of $\Kots$ 
nontrivial to determine. However, once $\Kots$ is determined, then $\Kts$ is uniquely determined from the linear 
Sylvester equation \eqref{eq:ARE_2}, since $(A,B)$ is stabilizable. Thus $\Kots$ is the main obstacle. Interestingly, 
under a restrictive regularity assumption introduced in \cite{GOH86}, the solution set of \eqref{eq:ARE_12} collapses to 
a single element. On the other hand, Theorem~\ref{thm:valuefunction} states that $\Kots$ exists without the regularity 
assumption. We forego the assumption and search for a more general principle that can resolve the choice of $\Kots$. 

Our approach involves exploiting the structure within the Kalman controllability decomposition, similarly as in \cite{TRE89}. Based on a 
modal decomposition of $A_1(\ol{K}_1)$, the Sylvester equation \eqref{eq:ARE_12} with 
$K_1 = \Kos$ splits into three decoupled linear Sylvester equations \eqref{eq:ARE_12_1}-\eqref{eq:ARE_12_3}. The 
problematic part of $\Kots$, denoted $K_{12,1}^*$ is then isolated to \eqref{eq:ARE_12_1} only. Regarding the solution of 
\eqref{eq:ARE_12_1}, it is well known (see Theorem 10.13 of \cite{TRE02}) that for stabilizable systems with positive 
semidefinite cost in the free endpoint case, the solution of the ARE is given by the smallest positive 
semidefinite solution in $\p \Gamma$. Also, $0 \in \Gamma$ if and only if $Q \geq 0$ 
(see  for example equation (1.16) of \cite{GEE91}) and so $0 \in \Gamma_-$ and $x_0^{\top}0 x_0 = 0$ gives a lower 
bound on the value function. Using the previous two observations, we find through repeated trials that 
$\Kots = 0$ in the positive semidefinite case. At this point we make a guess that the same form of $\Kots$ 
would arise in the indefinite case. Finally, we unambiguously deduce that $\Kots = 0$. 

Once we have fully characterized the form of $\Ks$, obtaining necessary and sufficient conditions for attainability 
follows analogously to the proof presented in \cite{TRE89,TRE89b}. We require only a few augmentations to account 
for the uncontrollable (but stable) dynamics. Now we proceed to the actual development.

The first step is to fix a suitable basis so that the blocks of $\Ks$ can be computed. 
Consider the Kalman controllability decomposition \eqref{eq:KalmanDecomp}, and suppose Assumption~\ref{assum:wellposed} holds. 
Then by Lemma~\ref{lem:wellposedgivesnegsdefK1},
the unique minimal solution $K^-_1 \in \p \Gamma_1 \neq \emptyset$ exists and $K^-_1 \le 0$. 
Similarly, because $(A_1,B_1)$ is controllable and $\p \Gamma_1 \neq \emptyset$, we can apply 
Theorem~\ref{thm:extremal} to obtain the unique maximal solution $K^+_1 \in \p \Gamma_1$. 
Let $\Delta_1 := K_1^+ - K_1^-$ be the gap associated with \eqref{eq:ARE_1}, the ARE in the controllable subspace. 
Following \cite{TRE89, TRE89b}, we can further decompose the 
controllable subspace based on Theorem~\ref{thm:geoAREproj}. To that end, define the following subspaces of $\RR^{n_1}$:
\begin{eqnarray}
\mathcal{L}_1 & := & \cL \cap \cC \label{eq:L1} \\ 
\cN_1(\cL_1)  & := & \langle \mathcal{L}_1 \cap \Ker( K_1^-) \; | \; A_1(K_1^-) \rangle 
                                           \cap \mathcal{X}^+(A_1(K_1^-)) \label{eq:N1} \,.
\end{eqnarray}
\noindent Here and for the remainder of this section, for simplicity we do not notationally differentiate a subspace that can belong to various vector spaces of different dimensions. For example, although technically $\cL \cap \cC \subset \RR^n$, we can view $\cL_1$ as a subspace of $\RR^{n_1} \sim \cC$.

Let $P_{\cNL} : \RR^{n_1} \rightarrow \cNL$ 
be the projection onto $\cNL$ along $\Delta_1^{-1} (\cN_1(\cL_1)^{\perp})$. 
Because $\cN_1(\cL_1)$ is an $A_1(K_1^-)$-invariant subspace contained in $\cX^+(A_1(K_1^-))$
for any $\cL_1$, we can apply Theorem~\ref{thm:geoAREproj} to obtain a solution $\ol{K}_1 \in \p \Gamma_1$ 
of the ARE of the form
\begin{equation}
\label{eq:Kbar1}
\ol{K}_1 := \gamma(\cNL) = K_1^- P_{\cNL} + K_1^+ (I_{n_1} - P_{\cNL}).
\end{equation}

Following Theorem~\ref{thm:geoAREproj}, define the following subspaces in 
$\cC \sim \RR^{n_1}$:
\begin{align}
\mathcal{X}_{1,1} &:= 
\mathcal{X}^+(A_1(\ol{K}_1))     = \cN_1(\cL_1), \\
\mathcal{X}_{1,2} &:= 
\mathcal{X}^0(A_1(\ol{K}_1)) = \Ker (\Delta_1)  , \\
\mathcal{X}_{1,3} &:= 
\mathcal{X}^-(A_1(\ol{K}_1)) = \mathcal{X}^-(A_1(K_1^+)) \cap \Delta_1^{-1} (\cN_1(\cL_1)^{\perp}) .
\end{align}
Then the state space decomposition \eqref{eq:KalmanDecomp} splits further into
\begin{equation}
\label{eq:Rnsplit}
\RR^n = \cX_{1,1} \oplus \cX_{1,2} \oplus \cX_{1,3} \oplus \cX_2 \,.
\end{equation}
Let $n_{1,i} := \text{dim}(\cX_{1,i})$ for $i = 1,2,3$ so that $n_1 = n_{1,1} + n_{1,2} + n_{1,3} \leq n$.
Without loss of generality (after a change of coordinates), the system matrices have the block form
\begin{equation}
\label{eq:ABblock4}
A = \begin{bmatrix} A_1      & A_{12} \\ 0 & A_2 \end{bmatrix}
  = \begin{bmatrix} A_{1,11} & A_{1,12} & A_{1,13} & A_{12,1} \\
                    A_{1,21} & A_{1,22} & A_{1,23} & A_{12,2} \\
                    A_{1,31} & A_{1,32} & A_{1,33} & A_{12,3} \\
                           0 & 0        & 0 & A_2 \end{bmatrix}, \; \; 
B = \begin{bmatrix} B_1 \\ 0 \end{bmatrix}
   = \begin{bmatrix} B_{1,1} \\ B_{1,2} \\ B_{1,3} \\ 0 \end{bmatrix}.
\end{equation}
The cost matrix $Q$ and each $K \in \Gamma$ 
have the block form
\begin{equation} 
\label{eq:QKblock4}
Q = \begin{bmatrix}Q_1 & Q_{12} \\ Q_{12}^{\top} & Q_2 \end{bmatrix}
  = \begin{bmatrix} Q_{1,11} & Q_{1,12} & Q_{1,13} & Q_{12,1} \\
                               Q_{1,12}^{\top} & Q_{1,22} & Q_{1,23} & Q_{12,2} \\
                               Q_{1,13}^{\top} & Q_{1,23}^{\top} & Q_{1,33} & Q_{12,3} \\
                               Q_{12,1}^{\top} & Q_{12,2}^{\top} & Q_{12,3}^{\top} & Q_2 \end{bmatrix}, \; \;
K = \begin{bmatrix} K_1 & K_{12} \\ K_{12}^{\top} & K_2 \end{bmatrix}
  = \begin{bmatrix} K_{1,11} & K_{1,12} & K_{1,13} & K_{12,1} \\
                               K_{1,12}^{\top} & K_{1,22} & K_{1,23} & K_{12,2} \\
                               K_{1,13}^{\top} & K_{1,23}^{\top} & K_{1,33} & K_{12,3} \\
                               K_{12,1}^{\top} & K_{12,2}^{\top} & K_{12,3}^{\top} & K_2 \end{bmatrix}.
\end{equation}
Our goal is to compute all of the blocks in \eqref{eq:QKblock4} for $K = \Ks$. First we resolve the choice of $\Kos$.

\begin{theorem} 
\label{thm:LQCPKLconsis}
Consider the \LQCP and suppose Assumption~\ref{assum:wellposed} holds. 
Then in the state space decomposition \eqref{eq:KalmanDecomp}, $\Kos = \ol{K}_1$, as given in \eqref{eq:Kbar1}.
\end{theorem}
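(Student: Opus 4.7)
The plan is to reduce the (LQCP)$_\mathcal{L}$ on $\RR^n$ to a controllable (LQCP) on $\cC \sim \RR^{n_1}$, apply Theorem~\ref{thm:LQCPcontrollable}, and then match value functions using Theorem~\ref{thm:valuefunction}. The first step is to justify that the auxiliary objects are well defined. By Assumption~\ref{assum:wellposed} and Lemma~\ref{lem:wellposedgivesnegsdefK1}, the minimal solution $K_1^- \in \p\Gamma_{1-}$ exists and satisfies $K_1^- \le 0$; since $(A_1,B_1)$ is controllable and $\p\Gamma_1 \neq \emptyset$, Theorem~\ref{thm:extremal} gives the maximal $K_1^+$, hence the gap $\Delta_1$. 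Because $\cN_1(\cL_1)$ is $A_1(K_1^-)$-invariant and contained in $\cX^+(A_1(K_1^-))$, Theorem~\ref{thm:geoAREproj} applies and $\ol{K}_1 = \gamma(\cN_1(\cL_1)) \in \p\Gamma_1$ is well defined. Moreover, by \eqref{eq:negsemidefquadform}, $K_1^- \le 0$ makes $K_1^-$ negative semidefinite on $\cL_1$, so the hypotheses of Theorem~\ref{thm:LQCPcontrollable} hold for the controllable subsystem.

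Next I set up the reduced problem. On $(A_1,B_1)$ with instantaneous cost $x_1^\top Q_1 x_1 + u^\top u$ and target subspace $\cL_1 = \cL \cap \cC$, denote the value function by $V_{\cL_1}^{(1)}$. For any initial condition $x_0 = (x_{0,1},0)^\top \in \cC$, the trajectory of \eqref{eq:sysctrdecomp} satisfies $x(t) = (x_1(t;x_{0,1},u),\,0)^\top$ for all $t \ge 0$, so the instantaneous cost collapses to $x_1^\top Q_1 x_1 + u^\top u$ and $J_T(x_0,u) = J_T^{(1)}(x_{0,1},u)$. In particular, the sets $\cU(x_0)$ and $\cU^{(1)}(x_{0,1})$ coincide for such $x_0$.

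The main obstacle is showing that the stability-modulo constraints also coincide, i.e.\ that for trajectories in $\cC$ the condition $d_\cL(x(t)) \to 0$ is equivalent to $d_{\cL_1}(x_1(t)) \to 0$. One direction is immediate from $\cL_1 \subset \cL$. For the other direction, the induced linear map $\cC / \cL_1 \to \RR^n/\cL$, $x + \cL_1 \mapsto x + \cL$, has trivial kernel (since $\cL_1 = \cL \cap \cC$); being an injection between finite-dimensional normed spaces, its inverse on its image is bounded. Translating this back yields a constant $c > 0$ with $d_{\cL_1}(x) \le c\,d_{\cL}(x)$ for every $x \in \cC$, which gives the desired equivalence. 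Hence $\cU_\cL(x_0) = \cU^{(1)}_{\cL_1}(x_{0,1})$ for $x_0 = (x_{0,1},0)^\top$, and therefore $V_\cL(x_0) = V_{\cL_1}^{(1)}(x_{0,1})$.

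Finally, I assemble the conclusion. Applying Theorem~\ref{thm:LQCPcontrollable}(ii) to the reduced problem yields $V_{\cL_1}^{(1)}(x_{0,1}) = x_{0,1}^\top \ol{K}_1 x_{0,1}$, with $\ol{K}_1 = \gamma(\cN_1(\cL_1))$ as in \eqref{eq:Kbar1}. On the other hand, Theorem~\ref{thm:valuefunction} gives $V_\cL(x_0) = x_0^\top \Ks x_0$, and for $x_0 = (x_{0,1},0)^\top$ the block structure \eqref{eq:QKblock4} reduces this to $x_{0,1}^\top \Kos x_{0,1}$. Equating the two expressions for all $x_{0,1} \in \RR^{n_1}$ and invoking symmetry of both matrices yields $\Kos = \ol{K}_1$, as claimed.
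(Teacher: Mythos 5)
Your proof is correct and follows essentially the same route as the paper: reduce to the controllable subsystem $(A_1,B_1)$ with target $\cL_1=\cL\cap\cC$, apply Theorem~\ref{thm:LQCPcontrollable}(ii) via Lemma~\ref{lem:wellposedgivesnegsdefK1} and \eqref{eq:negsemidefquadform}, and equate with the quadratic form from Theorem~\ref{thm:valuefunction} to read off $\Kos=\ol{K}_1$. The only difference is that you spell out (via the quotient-norm injectivity argument and the symmetry of the two blocks) two steps the paper states without proof, which is a welcome but inessential elaboration.
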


\begin{proof} 
Since $(A,B)$ is stabilizable, 
without loss of generality, $(A,B)$ has the form \eqref{eq:sysctrdecomp}, and $Q$ and $K$ have the block form 
\eqref{eq:QKblock}. Defining $x := (x_1, x_2)$, the Kalman controllability decomposition is 
\begin{align}
\dot{x}_1 &= A_1 x_1 + A_{12} x_2 + B_1 u, \; x_1(0) = x_{1,0} \\
\dot{x}_2 &= A_2 x_2, \; x_2(0) = x_{2,0}.
\end{align}
The controllable subspace is $\cC = \{ x \in \RR^n \; | \; x_2 = 0 \}$. If $x_{2,0} = 0$, then for all $t \geq 0$,
$x_2(t) = 0$ and $x(t) \in \cC$. Thus, we can define a new \LQCPONE on $\cC$ with dynamics 
$\dot{x}_1 = A_1 x_1 + B_1 u$, $x_1(0) = x_{1,0}$, and $(A_1,B_1)$ is controllable. The cost function is 
$J_{1T}(x_{1,0},u) := \int_0^T \omega_1 (x_1(t; x_{1,0},u),u(t)) \, dt$ with 
$\omega_1(x_1,u) := x_1^{\top} Q_1 x_1 + u^{\top} u$. Let $\cL_1 = \cL \cap \cC$ be the terminal subspace 
and let $d_{1 \cL_1} : \RR^{n_1} \rightarrow [0,\infty)$ be the distance function. The input spaces are
\begin{align}
\cU_1(x_{1,0}) &:= 
\left\{\left. u \in L_{2,loc}^m(\RR^+) \; \right| \; 
\lim_{T \rightarrow \infty} J_{1T}(x_{1,0},u) \text{ exists in } \RR^e \right\},\\
\cU_{1\cL_1}(x_{1,0}) &:= 
\left\{\left. u \in \cU_1(x_{1,0}) \; \right| \; 
\lim_{t \rightarrow \infty} d_{1 \cL_1}(x_1(t;x_{1,0},u)) = 0 \right\} \,.
\end{align}
The optimal cost is $V_{1\cL_1}(x_{1,0}) := \inf \{ \lim_{T \rightarrow \infty}J_{1T}(x_{1,0},u) \; | \; u \in \cU_{1\cL_1}(x_{1,0}) \}$. 
The ARE for the \LQCPONE is $\phi_1(K_1) = 0$ as in \eqref{eq:ARE_1} with solution set $\p \Gamma_1$. Consider any 
initial condition $x_0 = (x_{1,0}, 0) \in \cC$ and any control $u \in L_{2,loc}^m(\RR^+)$. Then 
$x(t;x_0,u) = (x_1(t;x_{1,0},u),0)$ and $\omega(x(t;x_0,u),u(t)) = \omega_1(x_1(t;x_{1,0},u),u(t))$, so for all 
$T \ge 0$, $J_T(x_0, u) = J_{1T}(x_{1,0},u)$. Consequently, we have $\cU(x_0) = \cU_1(x_{1,0})$. Also, 
$\lim_{t \rightarrow \infty} d_{\cL}(x(t;x_0,u)) = 0$ is equivalent to $\lim_{t \rightarrow \infty}d_{1 \cL_1}(x_1(t;x_{1,0},u)) = 0$. Thus 
$\cU_{\cL}(x_0) = \cU_{1\cL_1}(x_{1,0})$. With all the above, we conclude that $V_{\cL}(x_0) = V_{1\cL_1} (x_{1,0})$ 
for $x_0 = (x_{1,0},0) \in \cC$. 

Since $(A_1,B_1)$ is controllable, we can apply the results of \cite{TRE89b} to solve the \LQCPONE. 
Since $\Gamma_- \neq \emptyset$, we can apply Lemma \ref{lem:wellposedgivesnegsdefK1} to get that
the minimal solution  $K_1^- \in \p \Gamma_{1-}$ exists. Since $K_1^- \leq 0$, from \eqref{eq:negsemidefquadform} it follows that $K_1^-$ is negative semidefinite on $\cL_1$. By Theorem~\ref{thm:LQCPcontrollable}(ii), $V_{1\cL_1}(x_{1,0}) = x_{1,0}^{\top} \ol{K}_1 x_{1,0}$ with
$\ol{K}_1$ given in \eqref{eq:Kbar1}. Since we have already shown that $V_{\cL}(x_0) = V_{1\cL_1}(x_{1,0})$ for 
$x_0 = (x_{1,0},0) \in \cC$, it can be easily shown that $\Kos = \ol{K}_1$. 
\end{proof}

To resolve the remaining blocks of $\Ks$, we recall some results from \cite{TRE89}. 
For this to apply, we continue to assume that the state space is decomposed according to \eqref{eq:Rnsplit}. It was shown in (5.5) and (5.7) of \cite{TRE89} that $\ol{K}_1$ in \eqref{eq:Kbar1} and the closed-loop system matrix $A_1(\ol{K}_1)$ using $\ol{K}_1$ have the form
\begin{equation}
\label{eq:Kbar1block}
\ol{K}_1 = 
\begin{bmatrix} 
0 & 0                & 0 \\ 
0 & K_{1,22}^-       & K_{1,23}^- \\ 
0 & K_{1,23}^{-\top} & K_{1,33}^+
\end{bmatrix}, \; \;
A_1(\ol{K}_1) = 
\begin{bmatrix}
\ol{A}_{1,11} & 0             & 0 \\ 
0             & \ol{A}_{1,22} & 0 \\ 
0             & 0             & \ol{A}_{1,33}
\end{bmatrix},
\end{equation}
where $\sigma(\ol{A}_{1,11}) \subset \CC^+$, $\sigma(\ol{A}_{1,22}) \subset \CC^0$, and 
$\sigma(\ol{A}_{1,33}) \subset \CC^-$. 
For the choice of $K_1 = \ol{K}_1$ and substituting \eqref{eq:ABblock4}, \eqref{eq:QKblock4}, and \eqref{eq:Kbar1block}, 
the second ARE equation \eqref{eq:ARE_12} splits into three linear Sylvester equations: 
\begin{align}
\ol{A}_{1,11}^{\top} K_{12,1} + K_{12,1} A_2 &= - Q_{12,1},   
\label{eq:ARE_12_1} \\
\ol{A}_{1,22}^{\top} K_{12,2} + K_{12,2} A_2 &= - (Q_{12,2} + K_{1,22}^- A_{12,2} + K_{1,23}^- A_{12,3}), 
\label{eq:ARE_12_2} \\
\ol{A}_{1,33}^{\top} K_{12,3} + K_{12,3} A_2 &= - (Q_{12,3} + K_{1,23}^{- \top} A_{12,2} + K_{1,33}^+ A_{12,3}) 
\label{eq:ARE_12_3} \,.
\end{align}

Using these facts, we can now resolve the remaining blocks of $\Ks$. The main difficulty is that \eqref{eq:ARE_12_1} 
may have an infinite number of solutions for the $K_{12,1}$ block since 
$\sigma(\ol{A}_{1,11}^{\top}) \cap \sigma(-A_2)$ is not 
necessarily empty. The key insight is that $K^{\star}_{12,1}$ can be unambiguously determined by invoking 
Theorem~\ref{thm:attainPrep}(ii) given below, that any negative semidefinite solution $K_N \in \Gamma_-$ 
to the ARI provides a lower bound to the value function. 
In order to utilize this property to resolve the choice of $K_{12,1}^{\star}$, the next lemma describes the 
block structure of any $K_N \in \Gamma_-$.

\begin{lemma} 
\label{lem:KNSD4by4form}
Suppose Assumption~\ref{assum:wellposed} holds and the state space is decomposed as in \eqref{eq:Rnsplit}. Then for all $K_N \in \Gamma_-$, $K_N$ has the block form
\begin{equation*}
K_N = 
\begin{bmatrix} 0 & 0                & 0               & 0 \\ 
                0 & K_{1,22}^-       & K_{1,23}^-      & K_{12,2} \\ 
                0 & K_{1,23}^{-\top} & K_{1,33}        & K_{12,3} \\ 
                0 & K_{12,2}^{\top}  & K_{12,3}^{\top} & K_2 
\end{bmatrix}.
\end{equation*}
\end{lemma}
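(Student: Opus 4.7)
The plan is to deduce the block structure of $K_N$ in two stages: first analyze the controllable-subsystem block $K_{N,1}$ by squeezing it between the extremal solutions of the lower-order ARE, and then propagate the resulting information to the full matrix using $K_N \le 0$. All work is carried out in the basis adapted to \eqref{eq:Rnsplit}. To set up the squeeze, I would apply Theorem~\ref{thm:schur} to the block decompositions \eqref{eq:QKblock}--\eqref{eq:phiblock} of $K_N \le 0$ and $\phi(K_N) \ge 0$, exactly as in the proof of Lemma~\ref{lem:wellposedgivesnegsdefK1}, to obtain $K_{N,1} \le 0$ and $\phi_1(K_{N,1}) \ge 0$, so that $K_{N,1} \in \Gamma_{1-} \subset \Gamma_1$. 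Since $(A_1,B_1)$ is controllable and $\Gamma_1 \neq \emptyset$, Theorem~\ref{thm:extremal}(i) produces the minimal and maximal elements $K_1^-, K_1^+ \in \p\Gamma_1$, yielding $K_1^- \le K_{N,1} \le K_1^+$; paired with $K_{N,1} \le 0$, this double bound is the engine of the argument.

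Next, I would read off the relevant block entries of $K_1^-$ in the adapted basis. From \eqref{eq:Kbar1}, $\ol{K}_1 = K_1^- P_{\cNL} + K_1^+ (I_{n_1} - P_{\cNL})$, and $P_{\cNL}$ restricts to the identity on $\cX_{1,1} = \cN_1(\cL_1)$. Hence $\ol{K}_1 v = K_1^- v$ for every $v \in \cX_{1,1}$. The first block row of $\ol{K}_1$ in \eqref{eq:Kbar1block} vanishes, so $K_1^- v = 0$ for all such $v$; that is, $\cX_{1,1} \subset \Ker K_1^-$, and by symmetry the first block row and column of $K_1^-$ are zero in this basis. Separately, $\cX_{1,2} = \Ker \Delta_1$ forces $K_1^+ = K_1^-$ on $\cX_{1,2}$; comparing with \eqref{eq:Kbar1block} then identifies the $(2,2)$ and $(2,3)$ blocks of $K_1^-$ as $K_{1,22}^-$ and $K_{1,23}^-$, respectively.

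I would then transfer these structural facts to $K_N$. The squeeze gives $0 = (K_1^-)_{11} \le (K_{N,1})_{11}$ while $K_N \le 0$ gives $(K_{N,1})_{11} \le 0$, forcing the $(1,1)$-block of $K_N$ to vanish. Because $K_N \le 0$ has a zero diagonal block, the standard argument (examine $v^\top K_N v$ along $v$ supported on the first block plus $\lambda$ times a vector in each other block, and let $\lambda \to 0$) then forces every entry of the first block row and column of $K_N$ to vanish. For the $(2,2)$ and $(2,3)$ blocks, I define $\Delta_N := K_{N,1} - K_1^-$; the squeeze gives $0 \le \Delta_N \le \Delta_1$, so for $v \in \Ker \Delta_1 = \cX_{1,2}$ we have $v^\top \Delta_N v = 0$ and hence $\Delta_N v = 0$ by positive semidefiniteness. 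The second block row and column of $\Delta_N$ thus vanish, so $K_{N,1}$ coincides with $K_1^-$ in those positions, giving $(K_{N,1})_{22} = K_{1,22}^-$ and $(K_{N,1})_{23} = K_{1,23}^-$. The remaining $(3,3)$, $(2,4)$, $(3,4)$, and $(4,4)$ blocks receive no further constraint and retain the generic names $K_{1,33}$, $K_{12,2}$, $K_{12,3}$, $K_2$ in the statement.

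I expect the main obstacle to be the second step: extracting the block structure of $K_1^-$ — both the vanishing first row/column and the precise identification of the $(2,2)$ and $(2,3)$ blocks — directly from the projection formula \eqref{eq:Kbar1} and the closed-loop form \eqref{eq:Kbar1block}. Once this structural information about the minimal ARE solution is in hand, the remainder is a routine combination of the extremal squeeze with the Schur-type argument for negative semidefinite matrices that possess a vanishing diagonal block.
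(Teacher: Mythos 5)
Your proposal is correct, and its skeleton matches the paper's proof: sandwich the controllable block of $K_N$ between the extremal solutions $K_1^-\le K_{N,1}\le K_1^+$ of \eqref{eq:ARE_1} (via Theorem~\ref{thm:schur} and Theorem~\ref{thm:extremal}(i)) and play this against $K_N\le 0$ in the basis adapted to \eqref{eq:Rnsplit}. Where you differ is in the supporting details, and your variants are valid. The paper simply imports the block forms \eqref{eq:decomp3by3K1plusmin} of $K_1^+$, $K_1^-$, $\Delta_1$ from \cite{TRE89,TRE89b} (in particular $K_{1,22}^+=K_{1,22}^-$, $K_{1,23}^+=K_{1,23}^-$) and then grinds through several applications of Theorem~\ref{thm:schur}, finishing with Lemma~\ref{lem:3by3blockschur} applied to $-K_N\ge 0$ to kill $K_{12,1}$. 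You instead (i) derive $\cX_{1,1}\subset\Ker K_1^-$ directly from the projection formula \eqref{eq:Kbar1} together with the vanishing first block row of $\ol K_1$ in \eqref{eq:Kbar1block}, (ii) obtain the $(2,2)$ and $(2,3)$ identifications from $0\le \Delta_N:=K_{N,1}-K_1^-\le\Delta_1$ and $\cX_{1,2}=\Ker\Delta_1$ (so $\Delta_N$ annihilates $\cX_{1,2}$), avoiding any appeal to the equalities $K_{1,22}^+=K_{1,22}^-$, $K_{1,23}^+=K_{1,23}^-$, and (iii) dispose of the entire first block row of $K_N$ --- including the coupling block $K_{12,1}$ --- in one stroke from the standard fact that a negative semidefinite matrix with a vanishing diagonal block has vanishing off-diagonal blocks in that row, which subsumes both the paper's Schur steps and Lemma~\ref{lem:3by3blockschur}. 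Your route is somewhat more self-contained and economical; the paper's buys brevity by citing the known structure of the extremal solutions. One remark: your sentence identifying the $(2,2)$ and $(2,3)$ blocks of $K_1^-$ ``by comparing with \eqref{eq:Kbar1block}'' is superfluous, since $K_{1,22}^-$, $K_{1,23}^-$ are by definition the blocks of $K_1^-$ and your $\Delta_N$ argument already delivers $(K_{N,1})_{22}=(K_1^-)_{22}$ and $(K_{N,1})_{23}=(K_1^-)_{23}$ without it.
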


\begin{proof}
Let $K_N \in \Gamma_-$ have the block form in \eqref{eq:QKblock4}. Since $\Gamma_- \neq \emptyset$ and 
$(A,B)$ is stabilizable, we can apply Lemma~\ref{lem:wellposedgivesnegsdefK1} to obtain that the minimal 
solution $K_1^- \in \p \Gamma_{1 - }$ exists. Also $\p \Gamma_{1 - } \subset \p \Gamma_1 \neq \emptyset$. Because $K_N \in \Gamma_- \subset \Gamma$, by Theorem~\ref{thm:schur} we establish that its upper left block satisfies $K_1 \in \Gamma_1$.
Since $(A_1,B_1)$ is controllable and $\p \Gamma_1 \neq \emptyset$, we can apply Theorem~\ref{thm:extremal}(i) 
to get that the maximal solution $K_1^+ \in \p \Gamma_1$ also exists. Moreover, Theorem~\ref{thm:extremal}(i) also implies that $K_1^-, K_1^+ \in \Gamma_1$, and consequently
$K_1^- \leq K_1 \leq K_1^+$. Since $\p \Gamma_1 \neq \emptyset$, it has been shown (see equation (5.6) in \cite{TRE89}
and equation (5.4) in \cite{TRE89b}) that $K_1^+$, $K_1^-$, and $\Delta_1$ have the block form 
\begin{equation} 
\label{eq:decomp3by3K1plusmin}
K_1^+ = 
\begin{bmatrix} K_{1,11}^+ & 0 & 0 \\ 
                0 & K_{1,22}^+ & K_{1,23}^+ \\ 
                0 & K_{1,23}^{+\top} & K_{1,33}^+ 
\end{bmatrix}, \; \; 
K_1^- = 
\begin{bmatrix} 0 & 0 & 0 \\ 
                0 & K_{1,22}^- & K_{1,23}^- \\ 
                0 & K_{1,23}^{-\top} & K_{1,33}^- 
\end{bmatrix}, \; \; 
\Delta_1 = 
\begin{bmatrix} 
\Delta_{1,11} & 0 & 0 \\ 
0 & 0 & 0 \\ 
0 & 0 & \Delta_{1,33} 
\end{bmatrix} \,,
\end{equation}
where $K_{1,22}^+ = K_{1,22}^-$, $K_{1,23}^+ = K_{1,23}^-$, and $\Delta_{1,33} = K_{1,33}^+ - K_{1,33}^-$. 
Now consider $K_1 \geq K_1^-$ in block form, assuming the decomposition of $K_1^-$ in \eqref{eq:decomp3by3K1plusmin}.
We have 
\begin{equation*}
K_1 - K_1^- = 
\begin{bmatrix}
K_{1,11} - 0 & K_{1,12} - 0 & K_{1,13} - 0 \\
(K_{1,12} - 0)^{\top} & K_{1,22} - K_{1,22}^- & K_{1,23}-K_{1,23}^- \\
(K_{1,13}-0)^{\top} & (K_{1,23} - K_{1,23}^-)^{\top} & K_{1,33}-K_{1,33}^-
\end{bmatrix} \geq 0 \,.
\end{equation*}
Using Theorem~\ref{thm:schur}, we find $K_{1,11} \geq 0$. Since $K_N \in \Gamma_-$ by assumption, $K_N \leq 0$. 
Applying Theorem~\ref{thm:schur} to $K_N = \begin{bmatrix}K_{1,11} & * \\ * & * \end{bmatrix}$, we get 
$K_{1,11} \leq 0$. Thus $K_{1,11} = 0$. Now consider again $K_1^- \leq K_1 \leq K_1^+$ with the information that 
$K_{1,11}= 0$:
\begin{equation*}
\begin{bmatrix} 0 & 0 & 0 \\ 0 & K_{1,22}^- & K_{1,23}^- \\ 0 & (K_{1,23}^-)^{\top} & K_{1,33}^- \end{bmatrix} \leq
\begin{bmatrix} 0 & K_{1,12} & K_{1,13} \\ K_{1,12}^{\top} & K_{1,22} & K_{1,23} \\ K_{1,13}^{\top} & K_{1,23}^{\top} & K_{1,33} \end{bmatrix} \leq
\begin{bmatrix} K_{1,11}^+ & 0 & 0 \\ 0 & K_{1,22}^- & K_{1,23}^- \\ 0 & (K_{1,23}^-)^{\top} & K_{1,33}^+ \end{bmatrix}
\end{equation*}
where we have $K_{1,22}^+ = K_{1,22}^-$ and $K_{1,23}^+ = K_{1,23}^-$ as in \eqref{eq:decomp3by3K1plusmin}. 
We claim that $K_{1,12} = 0$, $K_{1,13} = 0$, $K_{1,22} = K_{1,22}^-$, and $K_{1,23} = K_{1,23}^-$. First, 
we have 
\begin{equation*}
K_1 - K_1^- = \begin{bmatrix} 0 & K_{1,12} & K_{1,13} \\ K_{1,12}^{\top} & * & * \\ K_{1,13}^{\top} & * & * \end{bmatrix} \geq 0.
\end{equation*}
Applying Theorem~\ref{thm:schur} again, we get $(I - 0 0^{\dagger}) \begin{bmatrix} K_{1,12} & K_{1,13} \end{bmatrix} = 0$, 
so that $K_{1,12} = 0$ and $K_{1,13} = 0$. Then $K_1 - K_1^- \geq 0$ reduces to
\begin{equation*}
\begin{bmatrix}
0 & 0 & 0 \\
0 & K_{1,22} - K_{1,22}^- & K_{1,23}-K_{1,23}^- \\
0 & (K_{1,23} - K_{1,23}^-)^{\top} & K_{1,33}-K_{1,33}^-
\end{bmatrix} \geq 0
\end{equation*}
which implies by Theorem \ref{thm:schur} that
\begin{equation*}
\begin{bmatrix}
K_{1,22} - K_{1,22}^- & K_{1,23}-K_{1,23}^- \\
 (K_{1,23} - K_{1,23}^-)^{\top} & K_{1,33}-K_{1,33}^-
\end{bmatrix} \geq 0.
\end{equation*}
Similarly, $K_1^+ - K_1 \geq 0$ gives 
\begin{equation} \label{eq:k1plus-k1lower2x2}
\begin{bmatrix} K_{1,22}^- - K_{1,22} & K_{1,23}^- -K_{1,23} \\
 (K_{1,23}^- - K_{1,23})^{\top} & K_{1,33}^+ -K_{1,33}
\end{bmatrix} \geq 0.
\end{equation}
Applying Theorem \ref{thm:schur} to the previous two statements, we get $K_{1,22}^- \leq K_{1,22} \leq K_{1,22}^-$, 
so $K_{1,22} = K_{1,22}^-$. Then rewriting the previous inequality \eqref{eq:k1plus-k1lower2x2}
\begin{equation*}
\begin{bmatrix} 0 & K_{1,23}^- -K_{1,23} \\
 (K_{1,23}^- - K_{1,23})^{\top} & K_{1,33}^+ -K_{1,33}
\end{bmatrix} \geq 0.
\end{equation*}
Applying Theorem \ref{thm:schur}, we get $(I - 0 0^{\dagger})(K_{1,23}^- - K_{1,23}) = 0$, 
so $K_{1,23} = K_{1,23}^-$. So far we have for $K_N \in \Gamma_-$
\begin{equation*}
K_N = \begin{bmatrix} 
0 & 0 & 0 & K_{12,1} \\
0 & K_{1,22}^- & K_{1,23}^- & K_{12,2} \\
0 & (K_{1,23}^-)^{\top} & K_{1,33} & K_{12,3} \\
K_{12,1}^{\top} & K_{12,2}^{\top} & K_{12,3}^{\top} & K_2
\end{bmatrix} \leq 0.
\end{equation*}
Then $-K_N \geq 0$ has the block form:
\begin{equation*}
\begin{bmatrix} 0 & 0 & -K_{12,1} \\ 0 & * & * \\ -K_{12,1}^{\top} & * & * \end{bmatrix} \geq 0.
\end{equation*}
Applying Lemma~\ref{lem:3by3blockschur}, we get $K_{12,1} = 0$.
\end{proof}

In the next result we completely characterize the form of $\Ks$.
Before proceeding with this result, we collect some well known results about the cost function.  

\begin{theorem}
\label{thm:attainPrep}
Consider the system \eqref{eq:sysgeneral} with the cost function 
\eqref{eq:finTimeCostFunctGeneral} - \eqref{eq:qformMostGeneral}. Let $x_0 \in \RR^n$, $T \geq 0$, and
$u \in L_{2,loc}^m(\RR^+)$. 
\begin{enumerate}
\item[(i)]
Let $K \in \partial \Gamma$. Then 
$J_T(x_0,u) = \int_0^T \| u(t) + B^{\top}Kx(t)\|^2 \, dt + x_0^{\top}Kx_0 - x^{\top}(T)Kx(T)$,
where $x(t) := x(t;x_0,u)$.
\item[(ii)] 
For all $x_0 \in \RR^n$ and $K_N \in \Gamma_-$, $V_{\cL}(x_0) \ge x_0^{\top} K_N x_0$.
\item[(iii)] 
Suppose Assumption~\ref{assum:wellposed} holds. 
If $J(x_0,u) = x_0^{\top} \Ks x_0$, then $u = -B^{\top} \Ks x$ and 
$\lim_{T \rightarrow \infty} x^{\top}(T) \Ks x(T) = 0$.
\end{enumerate}
\end{theorem}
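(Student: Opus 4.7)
The plan is to derive all three parts from a single completion-of-squares identity obtained by differentiating $x(t)^{\top} K x(t)$ along trajectories of \eqref{eq:sysgeneral}. For part (i), I would compute $\frac{d}{dt}[x^{\top} K x] = x^{\top}(A^{\top} K + K A) x + 2 u^{\top} B^{\top} K x$, substitute $A^{\top} K + K A = -Q + K B B^{\top} K$ from $K \in \partial \Gamma$, and rearrange to get $\omega(x,u) = \|u + B^{\top} K x\|^2 - \frac{d}{dt}[x^{\top} K x]$. Integrating from $0$ to $T$ then gives the stated identity directly.

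For part (ii), I would repeat the same computation with $K_N \in \Gamma_-$, but now $\phi(K_N) \geq 0$ turns the equality into the inequality $\omega(x,u) + \frac{d}{dt}[x^{\top} K_N x] \geq \|u + B^{\top} K_N x\|^2 \geq 0$. Integration yields $J_T(x_0,u) \geq x_0^{\top} K_N x_0 - x^{\top}(T) K_N x(T)$, and since $K_N \leq 0$ implies $-x^{\top}(T) K_N x(T) \geq 0$, this term can be dropped to obtain $J_T(x_0,u) \geq x_0^{\top} K_N x_0$ uniformly in $T$. Passing to the limit over any $u \in \cU_{\cL}(x_0)$ and then taking the infimum gives $V_{\cL}(x_0) \geq x_0^{\top} K_N x_0$ (the bound is trivial if $\cU_{\cL}(x_0) = \emptyset$).

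For part (iii), I would combine the identity from (i) (with $K = K^{\star}$) with the dynamic programming principle. Given an optimal $u$ with $J(x_0,u) = x_0^{\top} K^{\star} x_0$, the shifted tail $u_T(\cdot) := u(T+\cdot)$ belongs to $\cU_{\cL}(x(T))$, and a standard concatenation argument shows it must in fact be optimal from $x(T)$: otherwise, splicing a strictly better tail from $x(T)$ onto $u|_{[0,T]}$ would produce an admissible control from $x_0$ with strictly smaller cost, contradicting the optimality of $u$. Hence $V_{\cL}(x(T)) = J(x_0,u) - J_T(x_0,u)$, and since $J_T(x_0,u) \to x_0^{\top} K^{\star} x_0$, we get $V_{\cL}(x(T)) \to 0$. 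Theorem \ref{thm:valuefunction} identifies $V_{\cL}(x(T))$ with $x^{\top}(T) K^{\star} x(T)$, giving the second claim. Plugging both limits into the identity from (i) then forces $\int_0^{\infty} \|u + B^{\top} K^{\star} x\|^2 \, dt = 0$, whence $u = -B^{\top} K^{\star} x$ almost everywhere.

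The part requiring the most care is the Bellman step in (iii), since it needs a concatenation-of-controls argument together with the verification that the spliced control remains in $\cU_{\cL}(x_0)$ (which in turn uses the fact that the tail is in $\cU_{\cL}(x(T))$ and $J_T$ is finite); everything else is completion-of-squares bookkeeping driven by the ARE/ARI.
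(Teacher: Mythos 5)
Your proposal is correct. Note that the paper does not prove Theorem~\ref{thm:attainPrep} itself: it disposes of it by citation, attributing (i) to the standard completion-of-squares identity in \cite{WIL71,TRE89}, (ii) to Proposition 1.8 of \cite{GEE89} (see also Lemma 4.4 of \cite{TRE89}), and (iii) to Theorem 2.8(c) of \cite{GEE89} (see also the proof of Theorem 5.1(iii) in \cite{TRE89}). Your self-contained arguments essentially reconstruct those standard proofs: differentiating $x^{\top}Kx$ along trajectories and substituting the ARE gives (i); the same computation with $\phi(K_N)\geq 0$ turns the identity into an inequality, and $K_N\leq 0$ lets you drop the terminal term uniformly in $T$, giving (ii) (with the vacuous case $\cU_{\cL}(x_0)=\emptyset$ handled as you note, since then $V_{\cL}(x_0)=+\infty$); and the principle-of-optimality/concatenation argument, combined with Theorem~\ref{thm:valuefunction} evaluated at $x(T)$, gives (iii). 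The points that genuinely require care are exactly the ones you flag: the spliced control must be verified to lie in $\cU_{\cL}(x_0)$ (its truncated costs converge because the tail lies in $\cU_{\cL}(x(T))$ and $J_T(x_0,u)$ is finite, and the terminal-subspace constraint is inherited from the tail trajectory), and the identification $V_{\cL}(x(T)) = x^{\top}(T)\Ks x(T)$ invokes Assumption~\ref{assum:wellposed} through Theorem~\ref{thm:valuefunction}, which is precisely why statement (iii) carries that hypothesis while (i) and (ii) do not.
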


\begin{proof}
Statement (i) is standard. See for instance \cite{WIL71} or \cite{TRE89}.
Statement (ii) is Proposition 1.8 of \cite{GEE89}. See also Lemma 4.4 of \cite{TRE89}.
Statement (iii) is Theorem 2.8(c) of \cite{GEE89}. See also the proof of Theorem 5.1(iii) in \cite{TRE89}.
\end{proof}

\begin{theorem} 
\label{thm:Ks}
Consider the \LQCP and suppose Assumption~\ref{assum:wellposed} holds. 
Then in the state space decomposition \eqref{eq:Rnsplit}, $\Ks \in \p \Gamma$ has the form
\begin{equation}
\label{eq:Ksblock}
\Ks = 
\begin{bmatrix} 0 & 0                         & 0                         & 0 \\ 
                0 & K_{1,22}^-                & K_{1,23}^-                & K_{12,2}^{\star} \\ 
                0 & K_{1,23}^{-\top}          & K_{1,33}^+                & K_{12,3}^{\star} \\
                0 & (K_{12,2}^{\star})^{\top} & (K_{12,3}^{\star})^{\top} & K_2^{\star} 
\end{bmatrix} \,,
\end{equation}
where $K_{12,2}^{\star}$ is the unique solution to \eqref{eq:ARE_12_2}, $K_{12,3}^{\star}$ is the unique solution 
to \eqref{eq:ARE_12_3}, and $K_2^{\star}$ is the unique solution to \eqref{eq:ARE_2} with $K_{12} = K_{12}^{\star}$. 
\end{theorem}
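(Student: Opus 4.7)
Theorem~\ref{thm:valuefunction} gives a unique $\Ks \in \p\Gamma$ with $V_\cL(x_0) = x_0^\top \Ks x_0$, so $\phi(\Ks) = 0$ and $\Ks$ must satisfy the block equations \eqref{eq:ARE_1} and \eqref{eq:ARE_12_1}--\eqref{eq:ARE_2}. My plan is to compute the four principal blocks of $\Ks$ in the decomposition \eqref{eq:Rnsplit} in sequence. The block $\Kos$ is immediately supplied by Theorem~\ref{thm:LQCPKLconsis}: $\Kos = \ol{K}_1$, whose explicit $3 \times 3$ sub-structure \eqref{eq:Kbar1block} fixes every entry of the upper-left $3 \times 3$ block of \eqref{eq:Ksblock} and in particular yields $K_{1,11}^\star = 0$.

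The main obstacle will be the $K_{12,1}^\star$ block. Since $\sigma(\ol{A}_{1,11}^\top)$ and $\sigma(-A_2)$ may intersect, the Sylvester equation \eqref{eq:ARE_12_1} can have infinitely many solutions, so extra information is needed to single out the correct one. The key idea is to combine the lower bound in Theorem~\ref{thm:attainPrep}(ii), namely $\Ks \geq K_N$ for every $K_N \in \Gamma_-$, with the structural result of Lemma~\ref{lem:KNSD4by4form}, which shows that any $K_N \in \Gamma_-$ has vanishing $(1,1)$ and $(1,4)$ blocks in the decomposition \eqref{eq:Rnsplit}. Under Assumption~\ref{assum:wellposed} some $K_N \in \Gamma_-$ exists. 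Testing $\Ks \geq K_N$ on initial conditions of the form $x_0 = (x_{1,1}, 0, 0, x_2)$ and using $K_{1,11}^\star = 0$ reduces the inequality to
\begin{equation*}
2\, x_{1,1}^\top K_{12,1}^\star x_2 + x_2^\top \Kts x_2 \ \geq\ x_2^\top (K_N)_2\, x_2 \qquad \forall\, x_{1,1} \in \RR^{n_{1,1}},\ x_2 \in \RR^{n - n_1},
\end{equation*}
where $(K_N)_2$ is the bottom-right block of $K_N$. For fixed $x_2$ the left side is affine in $x_{1,1}$ and must be bounded below, forcing $K_{12,1}^\star x_2 = 0$; since $x_2$ was arbitrary, $K_{12,1}^\star = 0$.

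With $\Kos$ fixed and $K_{12,1}^\star = 0$ established, equations \eqref{eq:ARE_12_2} and \eqref{eq:ARE_12_3} become Sylvester equations with uniquely solvable data: $\sigma(\ol{A}_{1,22}) \subset \CC^0$ and $\sigma(\ol{A}_{1,33}) \subset \CC^-$, while stabilizability of $(A,B)$ gives $\sigma(A_2) \subset \CC^-$, so $\sigma(\ol{A}_{1,jj}^\top) \cap \sigma(-A_2) = \emptyset$ for $j = 2, 3$, uniquely determining $K_{12,2}^\star$ and $K_{12,3}^\star$. The same disjoint-spectrum condition $\sigma(A_2^\top) \cap \sigma(-A_2) = \emptyset$ then makes $\Kts$ the unique solution of \eqref{eq:ARE_2} once $K_{12} = K_{12}^\star$ is known. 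Since the $\Ks$ produced by Theorem~\ref{thm:valuefunction} satisfies all of \eqref{eq:ARE_1}--\eqref{eq:ARE_2} by construction, it must have the claimed form \eqref{eq:Ksblock}.
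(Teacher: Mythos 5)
Your proposal is correct and follows essentially the same route as the paper: fix $\Kos = \ol{K}_1$ via Theorem~\ref{thm:LQCPKLconsis}, use $\Ks \geq K_N$ from Theorem~\ref{thm:attainPrep}(ii) together with the block structure of $K_N$ from Lemma~\ref{lem:KNSD4by4form} to force $K_{12,1}^{\star}=0$, and then determine the remaining blocks from the uniquely solvable Sylvester equations. The only (cosmetic) difference is that where the paper applies Lemma~\ref{lem:3by3blockschur} to $\Ks - K_N \geq 0$, you reach $K_{12,1}^{\star}=0$ by the equivalent elementary observation that the affine-in-$x_{1,1}$ term of a nonnegative quadratic form must vanish.
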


\begin{proof}
By Theorem \ref{thm:LQCPKLconsis}, $\Kos = \ol{K}_1$ with the form of $\ol{K}_1$ given in \eqref{eq:Kbar1}. 
By Theorem~\ref{thm:valuefunction}, $\Ks \in \p \Gamma$. 
Next we consider \eqref{eq:ARE_12}. Using the decompositions above and with the choice $K_1 = \ol{K}_1$, 
the second ARE equation \eqref{eq:ARE_12} splits into \eqref{eq:ARE_12_1}, \eqref{eq:ARE_12_2}, and \eqref{eq:ARE_12_3}.
Since $\sigma(\ol{A}_{1,22}) \subset \CC^0$, $\sigma(\ol{A}_{1,33}) \subset \CC^-$, and $\sigma(-A_2) \subset \CC^+$,
\eqref{eq:ARE_12_2} and \eqref{eq:ARE_12_3} have unique solutions $K_{12,2}^{\star}$ and $K_{12,3}^{\star}$, 
respectively \cite{GANTMACHER}. Similarly, \eqref{eq:ARE_2} has a unique solution $K_2^{\star}$, assuming 
$K_{12} = K_{12}^{\star}$. At this point we know that $\Ks$ has the block form:
\begin{equation}
\label{eq:Ks2}
\Ks = 
\begin{bmatrix} 0                         & 0                         & 0                         & K_{12,1}^{\star} \\ 
                0                         & K_{1,22}^-                & K_{1,23}^-                & K_{12,2}^{\star} \\ 
                0                         & K_{1,23}^{-\top}          & K_{1,33}^+                & K_{12,3}^{\star} \\
                (K_{12,1}^{\star})^{\top} & (K_{12,2}^{\star})^{\top} & (K_{12,3}^{\star})^{\top} & K_2^{\star} 
\end{bmatrix} \,.
\end{equation}
Comparing to \eqref{eq:Ksblock}, it remains only to show that $K_{12,1}^{\star} = 0$. 
By Theorem~\ref{thm:valuefunction}, $V_{\cL}(x_0) = x_0^{\top} \Ks x_0$. Let $K_N \in \Gamma_-$. 
By Theorem~\ref{thm:attainPrep}(ii), for all $x_0 \in \RR^n$, $V_{\cL}(x_0) = x_0^{\top} \Ks x_0 \ge x_0^{\top} K_N x_0$;
that is, $\Ks \geq K_N$. Using the block form of $\Ks$ in \eqref{eq:Ks2} and the block form of $K_N$ in 
Lemma~\ref{lem:KNSD4by4form}, we have
\begin{equation*}
\Ks - K_N = 
\begin{bmatrix}
0                         & 0                                             & 0 
& K_{12,1}^{\star} \\ 
0                         & 0                                             & 0 
& K_{12,2}^{\star} - K_{12,2 N} \\
0                         & 0                                             & K_{1,33}^+ - K_{1,33 N} 
& K_{12,3}^{\star} - K_{12,3 N} \\
(K_{12,1}^{\star})^{\top} & (K_{12,2}^{\star} - K_{12,2 N})^{\top} & (K_{12,3}^{\star} - K_{12,3 N})^{\top} 
& K_2^{\star} - K_{2 N} 
\end{bmatrix} \geq 0 \,.
\end{equation*}
Applying Lemma~\ref{lem:3by3blockschur} yields that $K_{12,1}^{\star} = 0$, as desired. 
\end{proof}                       

\begin{remark}
\label{rem:Q}
We observe from the form of $\Ks$ that $K^{\star}_{12,1} = 0$. If we substitute $K^{\star}_{12,1} = 0$ into 
\eqref{eq:ARE_12_1}, we get that $Q_{12,1} = 0$. One can derive the fact that $Q_{12,1} = 0$ via a separate argument,
and this provides an independent validation of our result that $K^{\star}_{12,1} = 0$. Suppose 
Assumption~\ref{assum:wellposed} holds. Take any symmetric $K$ with the special form:
\[
K = 
\begin{bmatrix} 0 & 0 & 0 & 0 \\ 
                0 & K_{1,22} & K_{1,23} & K_{12,2} \\ 
                0 & K_{1,23}^{\top} & K_{1,33} & K_{12,3} \\ 
                0 & K_{12,2}^{\top} & K_{12,3}^{\top} & K_2 
\end{bmatrix} \,.
\]
We decompose $A$ and $B$ as in \eqref{eq:ABblock4}. Using a result analogous to equation (5.2) in \cite{TRE89}, 
it can be shown that $\cN_1(\cL_1)$ is $A_1$-invariant, and this implies $A_{1,21} = A_{1,31} = 0$. 
Then by direct computation $\phi(K)$ has the form: 
\[
\phi(K) = 
\begin{bmatrix} Q_{1,11} & Q_{1,12} & Q_{1,13} & Q_{12,1} \\
                Q_{1,12}^{\top} & * & * & * \\
                Q_{1,13}^{\top} & * & * & * \\
                Q_{12,1}^{\top} & * & * & * \end{bmatrix} \,.
\]
Now choose the upper left block of the above $K$ to be $K_1^- \in \p \Gamma$. By \eqref{eq:decomp3by3K1plusmin} this
choice is consistent with the form of $K$ above. Since the upper left block of $\phi(K)$ is written as $\phi_1(K_1)$  
and we know that $\phi_1(K_1^-) = 0$, it immediately follows that $Q_{1,11} = 0$, $Q_{1,12} = 0$, and $Q_{1,13} = 0$. 
Next, since $\Gamma_- \neq \emptyset$, let $K_N \in \Gamma_-$. By Lemma \ref{lem:KNSD4by4form}, $K_N$ has the special 
form above. Then we have 
\begin{equation*}
\phi(K_N) = 
\begin{bmatrix} 0 & 0 & 0 & Q_{12,1} \\ 
                0 & * & * & *        \\ 
                0 & * & * & * \\
                Q_{12,1}^{\top} & * & * & * 
\end{bmatrix} \geq 0.
\end{equation*}
By applying Lemma~\ref{lem:3by3blockschur}, we conclude that $Q_{12,1} = 0$. 
\end{remark} 

We conclude this section by applying Theorem \ref{thm:Ks} to obtain necessary and sufficient conditions for 
attainability of the \LQCP. Remarkably, the attainability result depends only on the controllable subspace. 

\begin{theorem}
\label{thm:attainable}
Suppose Assumption~\ref{assum:wellposed} holds and the state space is decomposed as in \eqref{eq:KalmanDecomp}.
Then the \LQCP is attainable if and only if $\Ker(\Delta_1) \subset \cL_1 \cap \Ker(K_1^-)$.
\end{theorem}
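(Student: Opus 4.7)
The plan is to imitate the strategy used to prove Theorem~\ref{thm:LQCPcontrollable}(iii)--(iv) in \cite{TRE89b}, reducing the statement to the controllable subsystem and exploiting the fact that stabilizability of $(A,B)$ automatically takes care of the uncontrollable part.

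For the necessity direction, I would restrict to initial states $x_0 = (x_{1,0}, 0) \in \cC$; for such $x_0$ one has $x_2(t) \equiv 0$ for every admissible $u$, so, as already observed in the proof of Theorem~\ref{thm:LQCPKLconsis}, the \LQCP collapses to the \LQCPONE on the controllable subspace. Attainability of the original problem thus forces attainability of the reduced one, and Theorem~\ref{thm:LQCPcontrollable}(iii) applied to the \LQCPONE (legitimate since $K_1^-$ exists by Lemma~\ref{lem:wellposedgivesnegsdefK1} and is negative semidefinite on $\cL_1$ via \eqref{eq:negsemidefquadform}) yields $\Ker(\Delta_1) \subset \cL_1 \cap \Ker(K_1^-)$.

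For sufficiency, I would take the natural candidate $u^{\star}(t) := -B^{\top}\Ks x(t)$, where $\Ks$ is the matrix constructed in Theorem~\ref{thm:Ks}, and let $x(\cdot)$ denote the resulting closed-loop trajectory. Theorem~\ref{thm:attainPrep}(i) gives
\[
J_T(x_0,u^{\star}) = x_0^{\top}\Ks x_0 - x^{\top}(T)\Ks x(T),
\]
so it suffices to establish (a) $d_{\cL}(x(t)) \to 0$ and (b) $x^{\top}(T)\Ks x(T) \to 0$; combined with Theorem~\ref{thm:valuefunction} these give $J(x_0,u^{\star}) = x_0^{\top}\Ks x_0 = V_{\cL}(x_0)$. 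To verify them I would work in the refined decomposition \eqref{eq:Rnsplit}, in which $A(\Ks)$ is block upper-triangular with diagonal blocks $\ol{A}_{1,11}, \ol{A}_{1,22}, \ol{A}_{1,33}, A_2$ by \eqref{eq:Kbar1block} and stabilizability. Stability of $A_2$ yields $x_2(t) \to 0$ exponentially; a variation-of-parameters/Sylvester argument (using $\sigma(A_2) \cap \sigma(\ol{A}_{1,ii}) = \emptyset$ for $i = 1,2,3$) then absorbs the $x_2$-forcing in the $x_1$-dynamics. Theorem~\ref{thm:LQCPcontrollable}(iv) applied to the homogeneous $x_1$-subsystem places $x_1(t)$ asymptotically in $\cN_1(\cL_1) \oplus \Ker(\Delta_1) \subset \cL_1 \subset \cL$, giving (a). For (b), the block form \eqref{eq:Ksblock} has a zero upper-left block, so $x^{\top}\Ks x$ depends only on $(x_{1,2},x_{1,3},x_2)$; the $x_{1,3}$ and $x_2$ components decay, while $x_{1,2}(t) \in \Ker(\Delta_1) \subset \Ker(K_1^-)$ is annihilated by the blocks $K_{1,22}^-$ and $K_{1,23}^{-\top}$.

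I expect the hard part to be controlling the possibly unstable $x_{1,1}$-component (with $\sigma(\ol{A}_{1,11}) \subset \CC^+$) under the forcing from an exponentially vanishing $x_2$, and in parallel verifying that its unstable growth is benign: $x_{1,1}(t)$ must remain inside $\cN_1(\cL_1) \subset \cL$ (so as not to affect $d_{\cL}(x(t))$) and must contribute nothing to $x^{\top}\Ks x$ — which is guaranteed precisely because the first row and column of $\Ks$ vanish. This is the step where the hypothesis $\Ker(\Delta_1) \subset \cL_1 \cap \Ker(K_1^-)$ is genuinely needed, and it is a more delicate analogue of the corresponding argument in the controllable case \cite{TRE89b}.
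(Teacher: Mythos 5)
Your argument is correct in substance, and its two halves relate to the paper's proof differently. For sufficiency you follow essentially the paper's route: the candidate feedback $u^c=-B^{\top}\Ks x$, the identity of Theorem~\ref{thm:attainPrep}(i), decay of $x_{1,3}$ and $x_2$ in the decomposition \eqref{eq:Rnsplit}, and the hypothesis $\Ker(\Delta_1)\subset\cL_1\cap\Ker(K_1^-)$ used twice --- once to get $\cX_{1,1}\oplus\cX_{1,2}\subset\cL_1$, hence $d_{\cL}(x(t))\to0$, and once, via the block form \eqref{eq:decomp3by3K1plusmin}, to force $K_{1,22}^-=K_{1,23}^-=0$, hence $x^{\top}(T)\Ks x(T)\to0$. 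The paper reaches these same two facts through the matrix $\left[\begin{smallmatrix}K_1^-\\ W_1\end{smallmatrix}\right]=\left[\begin{smallmatrix}0&D_2&D_3\end{smallmatrix}\right]$; your direct derivation of $K_{1,22}^-=K_{1,23}^-=0$ from $\Ker(\Delta_1)\subset\Ker(K_1^-)$ is a harmless shortcut. For necessity you genuinely diverge: you restrict to $x_0\in\cC$, use the equivalence of the \LQCP and the \LQCPONE established in the proof of Theorem~\ref{thm:LQCPKLconsis}, and invoke Theorem~\ref{thm:LQCPcontrollable}(iii) for the controllable subsystem (its hypotheses hold by Lemma~\ref{lem:wellposedgivesnegsdefK1} and \eqref{eq:negsemidefquadform}). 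This is valid and shorter; the paper instead pins the optimal input to the feedback $-B^{\top}\Ks x$ via Theorem~\ref{thm:attainPrep}(iii) and extracts the kernel condition from the closed-loop asymptotics (arbitrariness of $x_{1,2}(0)$ forcing $D_2=0$). Your route buys brevity by outsourcing this direction to the cited controllable-case result of \cite{TRE89b}; the paper's route works directly with the full-state closed loop and never needs the reduction for this direction.

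Three small points to tighten, none fatal. (i) The parenthetical claim $\sigma(A_2)\cap\sigma(\ol{A}_{1,ii})=\emptyset$ for $i=1,2,3$ is false for $i=3$ (both spectra lie in $\CC^-$), but you do not need it: $x_{1,3}(T)\to0$ follows from the variation-of-constants formula \eqref{eq:attainvarconsts} with $\sigma(\ol{A}_{1,33})\subset\CC^-$ and exponentially decaying forcing. (ii) In proving $x^{\top}(T)\Ks x(T)\to0$ you must also dispose of the cross term $2x_{1,2}^{\top}(T)K_{12,2}^{\star}x_2(T)$: since $\ol{A}_{1,22}$ may have Jordan blocks on $\CC^0$, $x_{1,2}$ need not stay bounded, so you need the observation that at most polynomial growth multiplied by exponential decay vanishes, exactly as the paper notes after \eqref{eq:both}. (iii) Theorem~\ref{thm:LQCPcontrollable}(iv) is not quite the right citation for the asymptotic location of $x_1(t)$; what you actually use is only that $d\bigl(x_1(T),\,\cX_{1,1}\oplus\cX_{1,2}\bigr)\le c\,\|x_{1,3}(T)\|\to0$, which is immediate from the coordinate decomposition, together with $\cX_{1,1}\oplus\cX_{1,2}\subset\cL_1$ under the hypothesis.
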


\begin{proof}
Due to Assumption~\ref{assum:wellposed}, we may further assume that the state space is decomposed according to \eqref{eq:Rnsplit}. Let $W_1 \in \RR^{n_1 \times n_1}$ be a matrix such that $\Ker (W_1)  = \cL_1$ and let 
$d_{1 \cL_1}: \RR^{n_1} \rightarrow [0,\infty)$ be the distance function in $\RR^{n_1}$ to $\cL_1$. 
Since $\cX_{1,1} = \langle \cL_1 \cap \Ker (K_1^-) \; | \; A_1(K_1^-) \rangle \cap \cX^+ (A_1(K_1^-))$,
we have $\cX_{1,1} \subset \langle \Ker (W_1) \cap \Ker (K_1^-) \; | \; A_1(K_1^-) \rangle \subset
\Ker (W_1) \cap \Ker (K_1^-) = \Ker \left( \begin{bmatrix}K_1^- \\ W_1 \end{bmatrix} \right)$. We claim 
\begin{equation}
\label{eq:D}
\begin{bmatrix} K_1^- \\ W_1 \end{bmatrix} = \begin{bmatrix} 0 & D_2 & D_3 \end{bmatrix} \,.
\end{equation}
Proof of Claim: Let $x_1 \in \cX_{1,1}$. Then $x_1 \in \Ker \left( \begin{bmatrix}K_1^- \\ W_1 \end{bmatrix} \right) =: 
\Ker \left( \begin{bmatrix} D_1 & D_2 & D_3 \end{bmatrix} \right)$. Also since $x_1 \in \cX_{1,1}$, in coordinates it has the form
$x_1 = (x_{1,1},0,0)$. Then $\begin{bmatrix} D_1 & D_2 & D_3 \end{bmatrix} x_1 = D_1 x_{1,1} = 0$. Since
$x_{1,1}$ is arbitrary, we get $D_1 = 0$, as desired.  

$(\Rightarrow)$ \
Suppose the \LQCP is attainable. Let $x_0 \in \RR^n$. By definition there exists $u^{\star} \in \cU_{\cL}(x_0)$ such
that $V_{\cL}(x_0) = J(x_0,u^{\star})$. By Theorem~\ref{thm:valuefunction} we know $V_{\cL}(x_0) = x_0^{\top} \Ks x_0$ where
$\Ks \in \p \Gamma$, and by Theorem~\ref{thm:Ks}, $\Ks$ is given in \eqref{eq:Ksblock}. Now we can apply 
Theorem~\ref{thm:attainPrep}(iii) to get $u^{\star} = -B^{\top} \Ks x$. The closed-loop
dynamics are $\dot{x} = A(\Ks) x$. Let $x := (x_1, x_2) := (x_{1,1},x_{1,2}, x_{1,3}, x_2 )$
according to the decomposition \eqref{eq:Rnsplit}. Then using the block form of $A_1(\Kos) = A(\ol{K}_1)$ in 
\eqref{eq:Kbar1block}, we have 
\begin{equation} 
\label{eq:optimalclsdyn}
\dot{x} = 
\begin{bmatrix} 
A_1(\Kos) & * \\ 0 & A_2 \end{bmatrix}
\begin{bmatrix} x_1 \\ x_2 \end{bmatrix} = 
\begin{bmatrix} 
\ol{A}_{1,11} & 0             & 0             & \ol{A}_{12,1} \\
            0 & \ol{A}_{1,22} & 0             & \ol{A}_{12,2} \\
            0 & 0             & \ol{A}_{1,33} & \ol{A}_{12,3} \\
            0 & 0             & 0             & A_2 
\end{bmatrix}
\begin{bmatrix} x_{1,1} \\ x_{1,2} \\ x_{1,3} \\ x_2 \end{bmatrix} \,,
\end{equation}
where $\sigma(\ol{A}_{1,11}) \subset \CC^+$, $\sigma(\ol{A}_{1,22}) \subset \CC^0$, $\sigma(\ol{A}_{1,33}) \subset \CC^-$,
and by stabilizability, $\sigma(A_2) \subset \CC^-$. Using the variation of constants formula we get that at $t = T$
\begin{equation}  
\label{eq:attainvarconsts}
x_{1,i}(T) = \textrm{e}^{\ol{A}_{1,ii} T} x_{1,i}(0) + 
             \int_0^T \textrm{e}^{\ol{A}_{1,ii} (T-\tau)} \ol{A}_{12,i} \textrm{e}^{A_2 \tau} x_2(0) \, d\tau, \; \; 
i = 1,2,3 \,.
\end{equation}
Since $\sigma(A_2) \subset \CC^-$, $\lim_{T \rightarrow \infty} x_2(T) = 0$. Using \eqref{eq:attainvarconsts} for $i = 3$, 
$\sigma(\ol{A}_{1,33}) \subset \CC^-$, and the fact that $\lim_{T \rightarrow \infty}x_2(T) = 0$, we also get 
$\lim_{T \rightarrow \infty}x_{1,3}(T) = 0$. Now using \eqref{eq:Ksblock}, the block form of $K_1^-$ given in 
\eqref{eq:decomp3by3K1plusmin}, and the fact that $K_{1,33}^+ = \Delta_{1,33} + K_{1,33}^-$, we have
\begin{align}
x^{\top} \Ks x 
&= x_1^{\top} \Kos x_1 + 2 x_1^{\top} K_{12}^{\star} x_2 + x_2^{\top} K_2^{\star} x_2 \nonumber \\
&= x_1^{\top} K_1^- x_1 + x_{1,3}^{\top} \Delta_{1,33} x_{1,3} + 2(x_{1,2}^{\top} K_{12,2}^{\star} 
 + x_{1,3}^{\top} K_{12,3}^{\star}) x_2 + x_2^{\top} K_2^{\star} x_2 \,. \label{eq:xKsx}
\end{align}
Using this expression combined with the fact that $\lim_{T \rightarrow \infty}x_{1,3}(T) = 0$, $\lim_{T \rightarrow \infty}x_2(T) = 0$, and 
$\lim_{T \rightarrow \infty}x^{\top}(T) \Ks x(T) = 0$ from Theorem~\ref{thm:attainPrep}(iii), we get
\begin{equation}
\label{eq:both}
\lim_{T \rightarrow \infty} x^{\top}(T) \Ks x(T) = 
\lim_{T \rightarrow \infty} \left( x_1^{\top}(T) K_1^-  x_1(T) + 2x_{1,2}^{\top}(T) K_{12,2}^{\star} x_2(T) \right) = 0 \,. 
\end{equation}
Now we observe that $\lim_{T \rightarrow \infty} 2x_{1,2}^{\top}(T) K_{12,2}^{\star} x_2(T) = 0$ because
$\sigma(\ol{A}_{1,22}) \subset \CC^0$ and $\sigma(A_2) \subset \CC^-$.
Returning to \eqref{eq:both}, this implies that also $\lim_{T \rightarrow \infty} x_1^{\top}(T) K_1^-  x_1(T) = 0$. 

We have assumed that $u^{\star} \in \cU_{\cL}(x_0)$ and $(A,B)$ is stabilizable. Therefore, 
$\lim_{T \rightarrow \infty}d_{\cL \cap \cC}(x(T)) = 0$, and thus within the controllable subspace $\lim_{T \rightarrow \infty}d_{1 \cL_1}(x_1(T)) = 0$. Since $\cL_1 = \Ker (W_1)$, 
$\lim_{T \rightarrow \infty}W_1 x_1(T) = 0$. Meanwhile by Lemma~\ref{lem:wellposedgivesnegsdefK1}, $K_1^- \le 0$. Since 
$\lim_{T \rightarrow \infty}x_1^{\top}(T) K_1^-  x_1(T) = 0$, by taking the limit in \eqref{eq:negsemidefquadform} we have that $\lim_{T \rightarrow \infty}K_1^- x_1(T) = 0$. 
Overall, we have $\lim_{T \rightarrow \infty} \begin{bmatrix} K_1^- \\ W_1 \end{bmatrix} x_1(T) = 0$. Using \eqref{eq:D}, this gives
$\lim_{T \rightarrow \infty} \left( D_2 x_{1,2}(T) + D_3 x_{1,3}(T) \right) = 0$. We already know that $\lim_{T \rightarrow \infty}x_{1,3}(T) = 0$, so we get
$\lim_{T \rightarrow \infty}D_2 x_{1,2}(T) = 0$. However, $\sigma(\ol{A}_{1,22}) \subset \CC^0$ and $x_{1,2}(0)$ is arbitrary, so
$D_2 = 0$. Finally, we observe that if $x_1 \in \cX_{1,2}$, then $\begin{bmatrix}0 & 0 & D_3 \end{bmatrix} x_1 = 0$
since $x_1 = (0,x_{1,2},0)$. That is, $\cX_{1,2} \subset \Ker \left( \begin{bmatrix}0 & 0 & D_3 \end{bmatrix} \right)$.
In sum, we have 
\begin{equation} 
\label{eq:kertriangleInL1K1}
\cX_{1,2} = \Ker (\Delta_1) \subset \Ker \left( \begin{bmatrix}0 & 0 & D_3 \end{bmatrix} \right) = 
            \Ker \left( \begin{bmatrix} K_1^- \\ W_1 \end{bmatrix} \right) = \cL_1 \cap \Ker (K_1^-) \,.
\end{equation}

$(\Leftarrow)$ \ 
Suppose that $\Ker (\Delta_1) \subset \cL_1 \cap \Ker (K_1^-)$. Let $x_0 \in \RR^n$. To show attainability, we must find an
optimal control. Consider the candidate $u^c := -B^{\top} \Ks x$, where $\Ks$ is given in \eqref{eq:Ksblock}. We must 
show $V_{\cL}(x_0) = J(x_0,u^c)$ and $u^c \in \cU_{\cL}(x_0)$. The closed-loop dynamics using $u^c$ are given in 
\eqref{eq:optimalclsdyn}. Following the same arguments as above we have that $\lim_{T \rightarrow \infty}x_2(T) = 0$ and 
$\lim_{T \rightarrow \infty}x_{1,3}(T) = 0$. By assumption, $\Ker (\Delta_1 )\subset \cL_1 \cap \Ker (K_1^-)$. From above,
$\cL_1 \cap \Ker (K_1^-) = \Ker \left( \begin{bmatrix} K_1^- \\ W_1 \end{bmatrix} \right) = \Ker \left( \begin{bmatrix}0 & D_2 & D_3 \end{bmatrix} \right) $.
We claim that $D_2 = 0$. To see this, let $x_1 \in \Ker (\Delta_1) = \cX_{1,2}$. Then $x_1 = (0,x_{1,2}, 0)$. 
Since $\Ker (\Delta_1) \subset \Ker \left( \begin{bmatrix}0 & D_2 & D_3 \end{bmatrix} \right)$ we have 
$\begin{bmatrix}0 & D_2 & D_3 \end{bmatrix} x_1 = D_2 x_{1,2} = 0$. Since $x_{1,2}$ is arbitrary, $D_2 = 0$.  
Using the block form of $K_1^-$ in \eqref{eq:decomp3by3K1plusmin}, we have
\[
\begin{bmatrix} K_1^- \\ W_1 \end{bmatrix} =
\begin{bmatrix} 
0 & 0 & 0 \\ 
0 & K_{1,22}^- & K_{1,23}^- \\ 
0 & K_{1,23}^{-\top} & K_{1,33}^- \\
W_{11} & W_{12} & W_{13}  
\end{bmatrix} =
\begin{bmatrix}0 & 0 & D_3 \end{bmatrix} \,.
\]
This implies $K_{1,22}^- = K_{1,23}^- = 0$.
Now we observe $\Ks \in \p \Gamma$ by Theorem~\ref{thm:valuefunction} and $u^c \in L_{2,loc}^{m}(\RR^+)$ for any
fixed $T \ge 0$. Therefore, we can apply Theorem~\ref{thm:attainPrep}(i) with $K = \Ks$ and $u = u^c$ to get
\begin{align}
\label{eq:JTc}
J_T(x_0,u^c) &= x_0^{\top} \Ks x_0 - x(T)^{\top} \Ks x(T) \,. 
\end{align}
We claim that $\lim_{T \rightarrow \infty} x^{\top}(T) \Ks x(T) = 0$. Using the expansion of $x(T)^{\top} \Ks x(T)$
given in \eqref{eq:xKsx}, and the fact that $\lim_{T \rightarrow \infty}x_2(T) = 0$ and $\lim_{T \rightarrow \infty}x_{1,3}(T) = 0$, we get 
$\lim_{T \rightarrow \infty} x^{\top}(T) \Ks x(T) = \lim_{T \rightarrow \infty} x_1^{\top}(T) K_1^- x_1(T)$. 
Using the available information about the block form of $K_1^-$ and that $\lim_{T \rightarrow \infty}x_{1,3}(T) = 0$, we find
\[
\lim_{T \rightarrow \infty} x^{\top}(T) \Ks x(T) = 
\lim_{T \rightarrow \infty} x_1^{\top}(T) 
\begin{bmatrix}
0 & 0 & 0 \\
0 & 0 & 0 \\
0 & 0 & K_{1,33}^- \\
\end{bmatrix} x_1(T)= \lim_{T \rightarrow \infty} x_{1,3}^{\top}(T) K_{1,33}^- x_{1,3}(T) = 0 \,.
\]
Returning to \eqref{eq:JTc}, we have $\lim_{T \rightarrow \infty} J_T(x_0,u^c) = J(x_0,u^c) = x_0^{\top} \Ks x_0$,
as desired. 

Finally, we must show $u^c \in \cU_{\cL}(x_0)$, and particularly $\lim_{T \rightarrow \infty}d_{\cL}(x(T)) = 0$.
Since $\lim_{T \rightarrow \infty}x_{1,3}(T) = 0$, we have that
\[
\lim_{T \rightarrow \infty} \begin{bmatrix} K_1^- \\ W_1 \end{bmatrix} x_1(T) = 
\lim_{T \rightarrow \infty} \begin{bmatrix} 0 & 0 & D_3 \end{bmatrix} x_1(T) =
D_3 x_{1,3}(T) = 0. 
\]
Thus, $\lim_{T \rightarrow \infty}W_1 x_1(T) = 0$, which implies $\lim_{T \rightarrow \infty}d_{1 \cL_1}(x_1(T)) = 0$. Since $\cL_1 = \cL \cap \cC$ and 
$\lim_{T \rightarrow \infty}x_2(T) = 0$, we have $\lim_{T \rightarrow \infty}d_{\cL} (x(T)) = 0$. Thus, $u^c \in \cU_{\cL} (x_0)$, as desired.
\end{proof}

We collect all of the previous results to obtain the culminating result on the solution of the \LQCP. 
It is a generalization of Theorem \ref{thm:LQCPcontrollable} for the case of $(A,B)$ controllable to the case
when $(A,B)$ is stabilizable.

\begin{theorem} 
\label{thm:mainresult}
Consider the \LQCP. Suppose Assumption~\ref{assum:wellposed} holds and the state space is decomposed as in \eqref{eq:KalmanDecomp}. Then we have
\begin{enumerate}
\item[(i)] 
The problem is well-posed.
\item[(ii)] 
For all $x_0 \in \RR^n$, $V_{\cL}(x_0) = x_0^{\top} \Ks x_0$. 
\item[(iii)] 
For all $x_0 \in \RR^n$, the problem is attainable if and only if $\Ker(\Delta_1) \subset \cL_1 \cap \Ker(K_1^-)$.
\item[(iv)] 
If the problem is attainable, then for each $x_0 \in \RR^n$, there exists exactly one optimal 
input $u^{\star}$, and it is given by $u^{\star} = -B^{\top} \Ks x$.
\end{enumerate}
\end{theorem}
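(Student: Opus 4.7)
The plan is to harvest this theorem from the sequence of results already established, treating it essentially as a bookkeeping summary. The only real content is in statement (iv), and even there most of the work has been done.

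For statement (i), I would invoke Theorem~\ref{thm:valuefunction}: under Assumption~\ref{assum:wellposed}, there is a unique $\Ks \in \p\Gamma$ such that $V_{\cL}(x_0) = x_0^{\top}\Ks x_0$ for every $x_0 \in \RR^n$. Since this quantity is a real number, well-posedness is immediate. Statement (ii) is then the same identity, supplemented by Theorem~\ref{thm:Ks} which provides the explicit block form of $\Ks$ in the decomposition \eqref{eq:Rnsplit}. Statement (iii) is a direct citation of Theorem~\ref{thm:attainable}, which was proved precisely to deliver the condition $\Ker(\Delta_1) \subset \cL_1 \cap \Ker(K_1^-)$ under Assumption~\ref{assum:wellposed}.

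For statement (iv), I would fix $x_0 \in \RR^n$ and suppose the problem is attainable. By definition there exists some $u^{\star} \in \cU_{\cL}(x_0)$ with $J(x_0,u^{\star}) = V_{\cL}(x_0)$, and by statement (ii) this common value equals $x_0^{\top}\Ks x_0$. Theorem~\ref{thm:attainPrep}(iii) then applies directly and forces the feedback form $u^{\star} = -B^{\top}\Ks x$. Uniqueness follows from a standard observation: once $u^{\star}(t) = -B^{\top}\Ks x(t)$ is prescribed, the closed-loop dynamics $\dot{x} = A(\Ks) x$ with initial condition $x_0$ admit a unique solution, so the control signal along this trajectory is uniquely determined.

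There is no substantive obstacle here, since the technical content sits in Theorems~\ref{thm:valuefunction}, \ref{thm:Ks}, \ref{thm:attainable}, and \ref{thm:attainPrep}. The only thing worth double-checking is that the hypotheses of Theorem~\ref{thm:attainPrep}(iii) are in place when it is invoked in (iv); this is ensured because Assumption~\ref{assum:wellposed} is assumed throughout and because the candidate $u^{\star}$ supplied by attainability realizes the value function, which is what Theorem~\ref{thm:attainPrep}(iii) requires as input.
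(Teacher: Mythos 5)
Your proposal is correct and follows essentially the same route as the paper: all four statements are read off from Theorems~\ref{thm:valuefunction}, \ref{thm:Ks}, \ref{thm:attainable}, and \ref{thm:attainPrep}(iii), exactly as the paper does. Your added remark on uniqueness in (iv) via the unique solution of the closed-loop dynamics is a harmless elaboration of what the paper leaves implicit.
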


\begin{proof}
Statements (i) and (ii) follow from Theorem~\ref{thm:valuefunction}. The form of $\Ks$ follows from Theorem~\ref{thm:Ks}.
Statement (iii) is an immediate consequence of Theorem~\ref{thm:attainable}. 
Statement (iv) follows from Theorem \ref{thm:attainPrep} (iii).
\end{proof}

\section{Discussion}
\label{sec:discuss}

In this section we discuss several special cases of our main result. This includes a comparison with classical 
results in the positive semidefinite case.
First, we consider the special case when $\cN_1(\cL_1) = 0$ which was also treated in Theorem 6.1 of \cite{TRE89}. 
From our experience it is only in exceptional cases that $\cN_1(\cL_1) \neq 0$. The following result shows that
when $\cN_1(\cL_1) = 0$, then $\Ks = K^+$, the maximal solution in $\p \Gamma$.
This result has practical significance because there are many powerful algorithms for numerically 
finding the maximal solution of the ARE. 

\begin{theorem}
\label{thm:N1L1Zero}
Consider the \LQCP, suppose that Assumption~\ref{assum:wellposed} holds, and that the state space decomposed as in \eqref{eq:KalmanDecomp}. Then 
$\cN_1(\cL_1) = 0$ if and only if $K^* = K^+$, where $K^+ \in \p \Gamma$ is the maximal solution.
\end{theorem}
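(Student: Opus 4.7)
The plan is to reduce the equality $K^\star = K^+$ to an equality of their upper-left $n_1 \times n_1$ blocks in the Kalman controllability decomposition \eqref{eq:KalmanDecomp}, and then to exploit the explicit formula \eqref{eq:Kbar1} for $\ol{K}_1 = \Kos$ from Theorem~\ref{thm:LQCPKLconsis} to read off when this equality forces $\cN_1(\cL_1) = 0$.

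First I will establish the preliminary fact that the upper-left block of $K^+$ equals $K_1^+$, the maximal element of $\p \Gamma_1$ (which exists by Theorem~\ref{thm:extremal}(i), once Lemma~\ref{lem:wellposedgivesnegsdefK1} supplies $\p \Gamma_1 \neq \emptyset$). On one hand, the block decomposition \eqref{eq:phiblock} of $\phi(K^+) = 0$ shows that the upper-left block $(K^+)_1$ of $K^+$ lies in $\p \Gamma_1$, so $(K^+)_1 \le K_1^+$. For the reverse inequality, I would construct an element $K' \in \p \Gamma$ whose upper-left block is $K_1^+$: starting from $K'_1 = K_1^+$, the Sylvester equations \eqref{eq:ARE_12} and \eqref{eq:ARE_2} uniquely determine $K'_{12}$ and $K'_2$, because $\sigma(A_1(K_1^+)) \subset \CC^- \cup \CC^0$ (Theorem~\ref{thm:extremal}(ii)) and $\sigma(A_2) \subset \CC^-$ (stabilizability) make the spectra appearing in the Sylvester solvability criterion disjoint. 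Maximality of $K^+$ then gives $K' \le K^+$, and taking the upper-left block (which preserves positive semidefiniteness) yields $K_1^+ \le (K^+)_1$. As a byproduct of the same uniqueness of Sylvester solutions, any $K \in \p \Gamma$ with upper-left block $K_1^+$ coincides with $K^+$; I will reuse this for the converse.

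For the forward direction, suppose $K^\star = K^+$. Taking upper-left blocks gives $\Kos = K_1^+$, and by Theorem~\ref{thm:LQCPKLconsis}, $\Kos = \ol{K}_1 = K_1^- P + K_1^+(I_{n_1} - P)$ with $P := P_{\cN_1(\cL_1)}$. Rearranging, $\ol{K}_1 - K_1^+ = -\Delta_1 P = 0$, so $\cN_1(\cL_1) = \Im(P) \subset \Ker(\Delta_1)$. Applying Theorem~\ref{thm:geoAREproj} to the controllable subsystem with the choice $\cV = \cX^+(A_1(K_1^-)) \in \Omega$ (for which $\gamma(\cV) = K_1^-$) identifies $\Ker(\Delta_1) = \cX^0(A_1(K_1^-))$. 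Since $\cN_1(\cL_1) \subset \cX^+(A_1(K_1^-))$ by the definition \eqref{eq:N1}, and the spectral subspaces $\cX^+(A_1(K_1^-))$ and $\cX^0(A_1(K_1^-))$ have trivial intersection, I conclude $\cN_1(\cL_1) = 0$.

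For the converse, suppose $\cN_1(\cL_1) = 0$. Then $P_{\cN_1(\cL_1)} = 0$, so \eqref{eq:Kbar1} gives $\ol{K}_1 = K_1^+$, and Theorem~\ref{thm:LQCPKLconsis} yields $\Kos = K_1^+$. By the preliminary fact, $K^\star$ is the unique element of $\p \Gamma$ with upper-left block $K_1^+$, which is $K^+$; hence $K^\star = K^+$. The main technical point is the preliminary step of pinning down $(K^+)_1 = K_1^+$ via a matched construction together with a principal-submatrix argument; once that is in hand, both directions are immediate from the explicit formula for $\ol{K}_1$ and the disjointness of spectral subspaces.
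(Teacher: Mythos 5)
Your proposal is correct; every step checks out, including the two places where care is needed (unique solvability of the Sylvester equations \eqref{eq:ARE_12} and \eqref{eq:ARE_2} because $\sigma(A_1(K_1^+))\subset\CC^-\cup\CC^0$ and $\sigma(-A_2)\subset\CC^+$, and the step from $\Delta_1 P_{\cN_1(\cL_1)}=0$ to $\cN_1(\cL_1)=0$, which indeed follows since $\cN_1(\cL_1)\subset\cX^+(A_1(K_1^-))$ while $\Ker(\Delta_1)=\cX^0(A_1(K_1^-))$, or even more directly since $\cX_{1,1}$ and $\cX_{1,2}$ are complementary summands in \eqref{eq:Rnsplit}). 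The overall skeleton matches the paper: reduce to the $(1,1)$ block via Theorem~\ref{thm:LQCPKLconsis} and formula \eqref{eq:Kbar1}, then propagate equality to the remaining blocks by uniqueness of the Sylvester solutions. Where you genuinely diverge is in identifying the upper-left block of $K^+$ with $K_1^+$: the paper does this spectrally, invoking Theorem~\ref{thm:gohberg} to get $\sigma(A(K^+))\subset\CC^-\cup\CC^0$, splitting $\sigma(A(K^+))=\sigma(A_1(K_1))\uplus\sigma(A_2)$, and then using the uniqueness statement in Theorem~\ref{thm:extremal}(ii); you instead use an order-theoretic argument, constructing a comparison element $K'\in\p\Gamma$ with upper-left block $K_1^+$ and squeezing via maximality of $K^+$ and Theorem~\ref{thm:schur} (principal blocks preserve the ordering), together with the observation from \eqref{eq:phiblock} that the block of $K^+$ lies in $\p\Gamma_1$. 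Your route uses only the existence and maximality of $K^+$ (not its closed-loop spectrum), at the cost of the extra construction of $K'$ (for which you should note, in passing, that the unique solution of the Lyapunov-type equation \eqref{eq:ARE_2} is automatically symmetric, so $K'$ is indeed in $\p\Gamma$); the paper's spectral argument is shorter given that Theorem~\ref{thm:gohberg} is already on the table, and it reuses the same computation in both directions. A side benefit of your write-up is that you make explicit the final implication $\ol{K}_1=K_1^+\Rightarrow P_{\cN_1(\cL_1)}=0$, which the paper states without detail.
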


\begin{proof}
(Only if) \
Suppose $\cN_1(\cL_1) = 0$. By Theorem~\ref{thm:Ks},  
$\Ks := \begin{bmatrix} K^{\star}_1 & K^{\star}_{12} \\ K^{\star \top}_{12} & K^{\star}_2 \end{bmatrix} \in \p \Gamma$,
where $K_1^{\star} = \ol{K}_1 = \gamma(\cN_1(\cL_1))$. By assumption, $P_{\cN_1(\cL_1)} = 0$, and
then \eqref{eq:Kbar1} gives $K_1^{\star} = K_1^+$, where $K_1^+$ is the maximal solution in $\p \Gamma_1$.
By Theorem~\ref{thm:extremal}(ii), we also know $K_1^+ \in \p \Gamma_1$ is the unique maximal solution such
that $\sigma(A_1(K_1^+)) \subset \CC^- \cup \CC^0$. Furthermore, by stabilizability, $\sigma(A_2) \subset \CC^-$.
Therefore, $\sigma(A_1(K_1^+)) \cap \sigma(-A_2) = \emptyset$, so $K_{12}^{\star}$ is the unique solution of the 
Sylvester equation \eqref{eq:ARE_12}. Similarly, since $\sigma(A_2^{\top}) \cap \sigma(-A_2) = \emptyset$, $K_2^{\star}$ is the
unique solution of the Sylvester equation \eqref{eq:ARE_2}.

Meanwhile, since $\p \Gamma \neq \emptyset$, by Theorem~\ref{thm:gohberg}, the maximal solution $K^+ \in \p \Gamma$ 
exists and satisfies $\sigma(A(K^+)) \subset \CC^- \cup \CC^0$. We claim $K^{\star} = K^+$. Let 
$K^+ = \begin{bmatrix} K_1 & K_{12} \\ K^{\top}_{12} & K_2 \end{bmatrix}$ in block form. 
Since $K^+ \in \p \Gamma$, we have that $K_1 \in \p \Gamma_1$. Using \eqref{eq:sysctrdecomp}, 
$\sigma(A(K^+)) = \sigma(A_1(K_1)) \uplus \sigma(A_2) \subset \CC^- \cup \CC^0$. Then since 
$\sigma(A_2) \subset \CC^-$, we have $\sigma(A_1(K_1)) \subset \CC^- \cup \CC^0$.
However, by Theorem~\ref{thm:extremal}(ii), $K_1 \in \p \Gamma_1$ and 
$\sigma(A_1(K_1)) \subset \CC^- \cup \CC^0$ together imply $K_1 = K_1^+ = K_1^{\star}$, 
the unique maximal solution in $\p \Gamma_1$. It immediately follows that $K_{12} = K^{\star}_{12}$ and
$K_2 = K^{\star}_2$, as desired. 

(If) \
Suppose $K^* = K^+$, the maximal solution in $\p \Gamma$. By writing $K^+$ in block form,  
$K^+ = \begin{bmatrix} K_1 & K_{12} \\ K_{12}^{\top} & K_2 \end{bmatrix}$, we have $K_1 = \Kos$. We also have 
that $K_1 = K_1^+$ is the maximal solution in $\p \Gamma_1$ using an argument analogous to the one above. 
That is, using \eqref{eq:sysctrdecomp}, $\sigma (A(K^+)) = \sigma(A_1(K_1)) \uplus \sigma(A_2)$. 
By Theorem~\ref{thm:gohberg}, $\sigma (A(K^+)) \subset \CC^- \cup \CC^0$. Since $\sigma(A_2) \subset \CC^-$, 
we get $\sigma(A_1(K_1)) \subset \CC^- \cup \CC^0$. Then by Theorem~\ref{thm:extremal}(ii), 
$K_1 = K_1^+ \in \p \Gamma_1$. 
Meanwhile by Theorem \ref{thm:LQCPKLconsis}, $\ol{K}_1 = \Kos$. Putting this altogether, we have that 
$\ol{K}_1 = \Kos = K_1^+$. Finally, using $\ol{K}_1 = K_1^+$ in \eqref{eq:Kbar1} gives that $P_{\cN_1(\cL_1)} = 0$, 
so $\cN_1(\cL_1) = 0$. 
\end{proof}

Next we discuss how Theorem~\ref{thm:mainresult} recovers well known results for the free-endpoint and fixed-endpoint 
problems when $Q$ is positive semidefinite and $(A,B)$ is stabilizable. First, we observe that when $Q \geq 0$, then 
$\phi(0) \geq 0$ so $0 \in \Gamma_- \neq \emptyset$. Therefore, Assumption~\ref{assum:wellposed} holds. We also assume that the state space is decomposed as in \eqref{eq:Rnsplit} wherever needed.

The main results on the free endpoint problem are summarized in Theorem~10.13 in \cite{TRE02}. In particular, when $\cL = \RR^n$,
$V_{\cL}(x_0) = x_0^{\top} P^- x_0$, where $P^- \geq 0$ is the smallest positive semidefinite solution to the ARE, 
and the optimal control is $u^{\star}(t) = - B^{\top} P^- x(t)$. We would like to verify that our 
Theorem~\ref{thm:mainresult} recovers these results. We will show that when $Q \ge 0$, $\Ks$ given 
in \eqref{eq:Ksblock} satisfies $\Ks = P^-$. 
To aid in this endeavor, we invoke a result from \cite{TRE89}. Let 
$\p \Gamma_{1 +} := \{K_1 \in \p \Gamma_1  ~|~  K_1 \ge 0 \}$. 

\begin{theorem}[Theorem 6.3 \cite{TRE89}]
\label{thm:6.3}
Assume $(A_1,B_1)$ is controllable and $\p \Gamma_{1 - } \neq \emptyset$. Then the following hold: if 
$\p \Gamma_{1 +} \neq \emptyset$, then (i) $\ol{K}_1 \in \p \Gamma_{1 +}$ and (ii) $K_1 \in \p \Gamma_{1 +}$ 
implies $\ol{K}_1 \leq K_1$.
\end{theorem}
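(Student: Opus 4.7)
The plan is to prove the two assertions separately. For part (i), I would use the explicit block structures in \eqref{eq:Kbar1block} and \eqref{eq:decomp3by3K1plusmin} together with the maximality of $K_1^+$ from Theorem~\ref{thm:extremal}(ii). For part (ii), the natural route is through the value-function interpretation of $\ol{K}_1$ supplied by Theorem~\ref{thm:LQCPcontrollable} applied to the controllable subsystem.

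For part (i), I would pick any $K \in \p\Gamma_{1+}$ and combine $K \leq K_1^+$ from Theorem~\ref{thm:extremal}(ii) with $K \geq 0$ to conclude $K_1^+ \geq 0$. The block form \eqref{eq:decomp3by3K1plusmin} shows that $K_1^+$ has a decoupled upper-left $(1,1)$ block, so $K_1^+ \geq 0$ forces the lower-right $2\times 2$ principal submatrix to be positive semidefinite as well. Using the identities $K_{1,22}^+ = K_{1,22}^-$ and $K_{1,23}^+ = K_{1,23}^-$ from \eqref{eq:decomp3by3K1plusmin}, this submatrix is precisely the nonzero $2\times 2$ block of $\ol{K}_1$ in \eqref{eq:Kbar1block}. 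Since $\ol{K}_1$ has zero entries outside this block, $\ol{K}_1 \geq 0$, and combined with $\ol{K}_1 \in \p\Gamma_1$ this places $\ol{K}_1 \in \p\Gamma_{1+}$.

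For part (ii), I would apply Theorem~\ref{thm:LQCPcontrollable} to the controllable system $(A_1,B_1)$ to obtain $V_{1\cL_1}(x_{1,0}) = x_{1,0}^\top \ol{K}_1 x_{1,0}$. For any $K_1 \in \p\Gamma_{1+}$, I take the candidate feedback $u = -B_1^\top K_1 x_1$. The completing-the-squares identity in Theorem~\ref{thm:attainPrep}(i), applied to the controllable subsystem, gives $J_{1T}(x_{1,0},u) = x_{1,0}^\top K_1 x_{1,0} - x_1(T)^\top K_1 x_1(T)$. Because $K_1 \geq 0$, the trailing term is nonnegative, so $J_{1T}(x_{1,0},u) \leq x_{1,0}^\top K_1 x_{1,0}$ for every $T$. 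After verifying $u \in \cU_{1\cL_1}(x_{1,0})$ and passing to the limit, the bound $V_{1\cL_1}(x_{1,0}) \leq x_{1,0}^\top K_1 x_{1,0}$ translates into $\ol{K}_1 \leq K_1$, as required.

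The main technical obstacle is the admissibility check in part (ii): namely, that $u = -B_1^\top K_1 x_1$ lies in $\cU_{1\cL_1}(x_{1,0})$, which requires both that $\lim_{T \to \infty} J_{1T}(x_{1,0},u)$ exists in $\RR^e$ and that $x_1(T)$ approaches $\cL_1$. Factoring $K_1 = H^\top H$ (possible since $K_1 \geq 0$), the limit reduces to the behavior of $\|Hx_1(T)\|^2$ along the closed-loop dynamics $\dot{x}_1 = A_1(K_1) x_1$, which one would control using the spectral structure of $A_1(K_1)$ inherited from the parameterization $K_1 = \gamma(\cV)$ in Theorem~\ref{thm:geoAREproj} and the identity $A_1(K_1)^\top K_1 + K_1 A_1(K_1) = -Q_1 - K_1 B_1 B_1^\top K_1$ obtained from $\phi_1(K_1) = 0$.
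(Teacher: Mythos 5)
The paper itself gives no argument for this statement (it is quoted from \cite{TRE89}), so your proposal has to stand on its own. Part (i) is sound as you describe it: any $K\in\p\Gamma_{1+}$ satisfies $0\le K\le K_1^+$ by Theorem~\ref{thm:extremal}(ii), so $K_1^+\ge0$; positive semidefiniteness passes through the congruence defining the coordinates of \eqref{eq:Rnsplit}, the relevant principal submatrix of $K_1^+$ in \eqref{eq:decomp3by3K1plusmin} is, by $K_{1,22}^+=K_{1,22}^-$ and $K_{1,23}^+=K_{1,23}^-$, exactly the nonzero block of $\ol{K}_1$ in \eqref{eq:Kbar1block}, and $\ol{K}_1\in\p\Gamma_1$ by construction, so $\ol{K}_1\in\p\Gamma_{1+}$.

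Part (ii) has a genuine gap, and it sits exactly at the admissibility check you defer. First, as you have set it up -- for an arbitrary terminal subspace $\cL_1$ -- assertion (ii) is simply false, so no verification that $u=-B_1^{\top}K_1x_1$ lies in $\cU_{1\cL_1}(x_{1,0})$ can succeed: take $n_1=1$, $A_1=B_1=1$, $Q_1=0$, $\cL_1=0$; then $\p\Gamma_1=\{0,2\}$, $K_1^-=0\in\p\Gamma_{1-}$, $\cN_1(\cL_1)=0$, hence $\ol{K}_1=K_1^+=2$, while $0\in\p\Gamma_{1+}$ and $2\not\le0$. The cited Theorem 6.3 of \cite{TRE89}, and the only instance the paper uses, is the free-endpoint case $\cL_1=\RR^{n_1}$, i.e.\ $\cN_1(\cL_1)=\langle \Ker(K_1^-)\;|\;A_1(K_1^-)\rangle\cap\cX^+(A_1(K_1^-))$; you must fix this choice, after which the requirement $d_{1\cL_1}(x_1(T))\to0$ is vacuous. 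Second, even then you still owe the existence of $\lim_{T\to\infty}J_{1T}$, equivalently of $\lim_{T\to\infty}x_1(T)^{\top}K_1x_1(T)$: with indefinite $Q_1$ this quantity is not monotone along $\dot x_1=A_1(K_1)x_1$, and your closing sketch names the right ingredients but does not supply the step that actually works. The missing argument is to apply Theorem~\ref{thm:geoAREproj} with $\cV:=\cX^+(A_1(K_1))$, so that $K_1=\gamma(\cV)$, and to show $K_1$ annihilates both $\cV$ and $\Ker(\Delta_1)$: on $\cV$ one has $K_1x=K_1^-x$, so $0\le x^{\top}K_1x=x^{\top}K_1^-x\le0$ (using $K_1\ge0$ and $K_1^-\le0$) forces $K_1x=0$; on $\Ker(\Delta_1)=\cX^0(A_1(K_1))$ one has $K_1x=K_1^+x$ and $x^{\top}K_1^+x=x^{\top}K_1^-x$, so $K_1^+\ge0\ge K_1^-$ again forces $K_1x=0$. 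Decomposing $x_1(T)$ along $\cX^+(A_1(K_1))\oplus\cX^0(A_1(K_1))\oplus\cX^-(A_1(K_1))$, only the $\cX^-$ component contributes to $x_1(T)^{\top}K_1x_1(T)$, and it decays, so $x_1(T)^{\top}K_1x_1(T)\to0$, $J(x_{1,0},u)=x_{1,0}^{\top}K_1x_{1,0}$, $u\in\cU_{1\cL_1}(x_{1,0})$, and Theorem~\ref{thm:LQCPcontrollable}(ii) then yields $\ol{K}_1\le K_1$. (The same computation shows $\cX^+(A_1(K_1))$ is an $A_1(K_1^-)$-invariant subspace of $\Ker(K_1^-)\cap\cX^+(A_1(K_1^-))$, hence contained in $\cN_1(\RR^{n_1})$, which gives a purely algebraic route to (ii) via the order-reversing property of $\gamma$, avoiding trajectories altogether.)
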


\begin{lemma}
Consider the \LQCP. Suppose $(A,B)$ is stabilizable, $\cL = \RR^n$, and
$Q \ge 0$. Then $\Ks = P^-$.
\end{lemma}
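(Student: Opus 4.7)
My plan is to combine the value function formula produced by our main result with the classical free-endpoint, positive semidefinite value function formula, and then conclude by uniqueness of the symmetric matrix representing a quadratic form.

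First, I would verify that Assumption~\ref{assum:wellposed} is in force under the lemma's hypotheses. Since $Q \ge 0$, the zero matrix satisfies $\phi(0) = Q \ge 0$ and trivially $0 \le 0$, so $0 \in \Gamma_-$ and therefore $\Gamma_- \ne \emptyset$. Combined with the hypothesis that $(A,B)$ is stabilizable, this gives Assumption~\ref{assum:wellposed}.

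Next, I would invoke Theorem~\ref{thm:mainresult}(ii) to obtain
\begin{equation*}
V_{\cL}(x_0) = x_0^{\top} \Ks x_0 \quad \text{for all } x_0 \in \RR^n,
\end{equation*}
where $\Ks \in \p \Gamma$ is the matrix constructed in Theorem~\ref{thm:Ks}. On the other hand, the classical positive semidefinite, stabilizable, free-endpoint result (Theorem 10.13 of \cite{TRE02}, recalled in the paragraph preceding this lemma) states that under precisely the hypotheses of the lemma,
\begin{equation*}
V_{\cL}(x_0) = x_0^{\top} P^- x_0 \quad \text{for all } x_0 \in \RR^n.
\end{equation*}
Equating these two expressions for every $x_0 \in \RR^n$ and polarizing the resulting symmetric bilinear forms yields $\Ks = P^-$.

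There is essentially no technical obstacle here, since the lemma is a consistency check between our characterization of $\Ks$ and the previously known closed-form value function in the positive semidefinite free-endpoint regime. All of the substantive content (the specific form of $\Kos = \ol{K}_1$, the off-diagonal and lower-right blocks as unique Sylvester solutions, and $\Ks \in \p \Gamma$) is absorbed into Theorem~\ref{thm:mainresult} and Theorem~\ref{thm:Ks}, while positive semidefiniteness and the smallest-solution characterization of $P^-$ come packaged in the cited classical theorem. If a more structural derivation were preferred, one could instead apply Theorem~\ref{thm:6.3} to identify $\ol{K}_1$ with the smallest positive semidefinite solution $P_1^-$ of $\phi_1 = 0$ (using $\p \Gamma_{1-} \ne \emptyset$ from Lemma~\ref{lem:wellposedgivesnegsdefK1} and $\p \Gamma_{1+} \ne \emptyset$ from the classical LQ result applied to the controllable pair $(A_1,B_1)$), and then match the remaining blocks of $P^-$ with $K_{12}^{\star}$ and $K_2^{\star}$ by uniqueness of the Sylvester equations \eqref{eq:ARE_12} and \eqref{eq:ARE_2}. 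This longer route is unnecessary for the present lemma.
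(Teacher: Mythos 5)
Your main argument is logically sound, but it takes a genuinely different route from the paper. You prove $\Ks = P^-$ by equating the two value-function formulas — $V_{\cL}(x_0) = x_0^{\top}\Ks x_0$ from Theorem~\ref{thm:mainresult}(ii) and $V_{\cL}(x_0) = x_0^{\top}P^- x_0$ from Theorem 10.13 of \cite{TRE02} — and polarizing. The paper instead never uses the classical value-function formula as an input: it only uses the definition of $P^-$ as the smallest element of $\p\Gamma_+$, and it shows directly that $\Ks$ is that smallest element. Concretely, it applies Theorem~\ref{thm:6.3} (after establishing $\p\Gamma_{1-}\neq\emptyset$ via Lemma~\ref{lem:wellposedgivesnegsdefK1} and $\Ks \geq 0$ via Theorem~\ref{thm:attainPrep}(ii) with $K_N = 0$) to get that $\Kos = \ol{K}_1$ is the smallest element of $\p\Gamma_{1+}$, and then rules out any $K \in \p\Gamma_+$ with $K \leq \Ks$, $K \neq \Ks$ by a two-case argument: either $K_1 \neq \Kos$ (contradicting minimality of $\Kos$ via Theorem~\ref{thm:schur}), or $K_1 = \Kos$, in which case $K \geq 0$ forces $K_{12,1} = 0$ by Lemma~\ref{lem:3by3blockschur} and the remaining blocks are pinned down by Sylvester uniqueness, so $K = \Ks$. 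What each approach buys: yours is shorter, but since this lemma exists precisely to verify that the new theory \emph{recovers} Theorem 10.13 (the paper's proof ends by deducing $V_{\cL}(x_0) = x_0^{\top}P^-x_0$ as a consequence), importing that classical formula as a hypothesis makes the intended verification circular in spirit, even though the lemma as literally stated is correctly proved; the paper's self-contained route keeps the recovery genuine and also exposes exactly where positive semidefiniteness matters. One caveat on your sketched "more structural" alternative: you cannot match all of $K_{12}$ by "uniqueness of the Sylvester equation \eqref{eq:ARE_12}" — after the modal splitting, only \eqref{eq:ARE_12_2} and \eqref{eq:ARE_12_3} have unique solutions, while \eqref{eq:ARE_12_1} generally does not (since $\sigma(\ol{A}_{1,11})$ and $\sigma(-A_2)$ both lie in $\CC^+$); the $K_{12,1}$ block of $P^-$ must be killed by the positivity argument of Lemma~\ref{lem:3by3blockschur}, exactly as the paper does.
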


\begin{proof}
We begin by applying Theorem~\ref{thm:6.3} to show that $\ol{K}_1$ is the smallest solution in $\p \Gamma_{1 +}$. To that
end, we must show that $\p \Gamma_{1 - } \neq \emptyset$ and $\p \Gamma_{1 +} \neq \emptyset$. First, since 
Assumption~\ref{assum:wellposed} holds, we can apply Lemma~\ref{lem:wellposedgivesnegsdefK1} to get 
$K_1^- \in \p \Gamma_{1 - }$ exists, so $\p \Gamma_{1 - } \neq \emptyset$. Second, because $Q \ge 0$, we know
$\phi(0) \ge 0$, so $0 \in \Gamma_-$. By Theorem~\ref{thm:valuefunction}, $V_{\cL}(x) = x^{\top} \Ks x$. Applying 
Theorem~\ref{thm:attainPrep}(ii) with $K_N = 0$, we get $x^{\top} \Ks x \ge x^{\top} 0 x = 0$, for all $x \in \RR^n$,
so $\Ks \ge 0$. That is, $\Ks \in \p \Gamma_+$. By Theorem~\ref{thm:schur}, this implies 
$K_1^{\star} \ge 0$, so $K_1^{\star} = \ol{K}_1 \in \p \Gamma_{1+} \neq 0$. Now we can apply Theorem~\ref{thm:6.3} to
get $K_1^{\star} = \ol{K}_1$ is the smallest solution in $\p \Gamma_{1+}$. 

It remains to show that $\Ks = P^-$ is the smallest solution in $\p \Gamma_+$. 
To arrive at a contradiction, suppose there 
exists $K \in \p \Gamma_+$ such that $K \neq \Ks$ and $K \le \Ks$. There are two cases. First, suppose $K \in \p \Gamma_+$
with $K \leq \Ks$ such that $K_1 \neq \Kos$, where $K_1$ is the upper left block of $K$. Since $K \in \p \Gamma$,
$\phi(K) = 0$, so $\phi_1(K_1) = 0$, implying $K_1 \in \p \Gamma_1$. By Theorem~\ref{thm:schur}, $K \ge 0$ implies 
$K_1 \ge 0$, so $K_1 \in \p \Gamma_{1+}$. Again by Theorem~\ref{thm:schur}, $K \le \Ks$ implies $K_1 \le \Kos$.
Thus, we have $K_1 \in \p \Gamma_{1+}$ such that $K_1 \le \Kos$, which contradicts that $\Kos$ is the smallest 
solution in $\p \Gamma_{1+}$.

For the second case,  suppose $K \in \p \Gamma_+$ with $K \leq \Ks$ such that $K_1 = \Kos$. By \eqref{eq:Kbar1block}, 
$K$ has the form
\[
K = \begin{bmatrix} 0 & 0                         & 0                         & K_{12,1} \\ 
                0 & K_{1,22}^-                & K_{1,23}^-                & K_{12,2} \\ 
                0 & K_{1,23}^{-\top}          & K_{1,33}^+                & K_{12,3} \\
                K_{12,1}^{\top} & K_{12,2}^{\top} & K_{12,3}^{\top} & K_2
\end{bmatrix} \,.
\]
Since $K \geq 0$, we can apply Lemma~\ref{lem:3by3blockschur} to find that $K_{12,1} = 0$. Then since $K_1 = \Kos$,
$K_{12,1} = K_{12,1}^{\star} = 0$, and $\phi(K) = 0$, the solutions for $K_{12,2}$ and $K_{12,3}$ are unique and 
match $K_{12,2}^{\star}$ and $K_{12,3}^{\star}$, respectively. Thus $K = \Ks$, a contradiction. 
We conclude that $\Ks$ is the smallest solution in $\p \Gamma_+$. This proves that for the free endpoint case
when $Q \ge 0$ that $V_{\cL}(x_0) = x_0^{\top}P^- x_0$. Also, Theorem \ref{thm:mainresult} (iv) gives 
the optimal control $u(t) = -B^{\top}P^- x(t)$ since $P^- = \Ks$.
\end{proof}

Next we consider attainability in the free endpoint case. Since Assumption~\ref{assum:wellposed} holds, we can 
apply Theorem~\ref{thm:mainresult}(iii). In the free endpoint problem, $\cL_1 = \cC$, so by Theorem~\ref{thm:mainresult}(iii),
the problem is attainable if and only if $\Ker (\Delta_1) \subset \Ker (K_1^-)$. By Proposition 6.4 of \cite{TRE89}, 
the latter condition always holds. Thus, we recover the well-known fact that for the free endpoint case in the positive
semidefinite case, the problem is always attainable. 

Now we discuss the fixed endpoint problem. The main results are summarized in Theorem~10.18 in \cite{TRE02}. 
In particular, when $\cL = 0$, $V_{\cL}(x_0) = x_0^{\top} P^+ x_0$, where $P^+ \geq 0$ is the largest positive semidefinite 
solution to the ARE, and the optimal control is $u^{\star}(t) = - B^{\top} P^+ x(t)$. 
We would like to verify that our Theorem~\ref{thm:mainresult} recovers these results. We must 
show that when $Q \ge 0$, then $\Ks = P^+$. For the fixed endpoint problem, $\cL_1 = 0$, so 
$\cN_1(\cL_1) = 0$. The desired result is then immediately obtained from Theorem~\ref{thm:N1L1Zero}.

Now we consider attainability in the fixed endpoint case. 
The well-known necessary and sufficient conditions for attainability in the positive semidefinite case, stated in 
Theorem 10.18(iii) of \cite{TRE02}, is that every eigenvalue of $A$ on the imaginary axis is $(Q,A)$ observable. 
We must show that this statement is equivalent to our attainability result in Theorem~\ref{thm:mainresult}(iii), 
which for the fixed-endpoint case requires that $\Ker(\Delta_1) \subset 0 \cap \Ker(K_1^-)$, or equivalently, 
$\Delta_1 > 0$. This connection is resolved by the following result, whose proof is found in the Appendix. 

\begin{theorem}
\label{thm:delta1pos}
Suppose $(A,B)$ is stabilizable and $Q \geq 0$. Then every eigenvalue of $A$ on the imaginary axis is $(Q,A)$ 
observable if and only if $\Delta_1 > 0$.
\end{theorem}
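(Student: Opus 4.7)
The plan is to reduce the assertion to a classical fact about the controllable subsystem $(A_1,B_1)$ via the Kalman decomposition, and then apply the structural description of $\Ker(\Delta_1)$ provided by Theorem~\ref{thm:geoAREproj}.

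First I would handle the ``transfer'' from the full data $(A,Q)$ to the controllable-subsystem data $(A_1,Q_1)$. Because $(A,B)$ is stabilizable, the block triangular form \eqref{eq:sysctrdecomp} forces $\sigma(A_2)\subset\CC^-$, so the imaginary-axis eigenvalues of $A$ are precisely the imaginary-axis eigenvalues of $A_1$. For $\lambda\in i\RR\cap\sigma(A_1)$, a PBH test on $\begin{bmatrix}A-\lambda I\\ Q\end{bmatrix}$ with $v=(v_1,v_2)$ in its kernel immediately yields $v_2=0$ (since $A_2-\lambda I$ is invertible), so $(Q,A)$-observability at $\lambda$ collapses to: $v_1\in\Ker(A_1-\lambda I)\cap\Ker(Q_1)\cap\Ker(Q_{12}^{\top})$ implies $v_1=0$. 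The extra constraint $Q_{12}^{\top}v_1=0$ is redundant under $Q\ge 0$: applying $Q\ge 0$ to the vector $(v_1,0)$ gives that $v_1^{\top}Q_1 v_1=0$ forces $Q(v_1,0)=0$, hence $Q_{12}^{\top}v_1=0$ automatically. Thus $(Q,A)$-observability at $\lambda$ is equivalent to $(Q_1,A_1)$-observability at $\lambda$, and so the statement reduces to: every imaginary-axis eigenvalue of $A_1$ is $(Q_1,A_1)$-observable iff $\Delta_1>0$.

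Next I would identify $\Ker(\Delta_1)$ explicitly. Since $Q\ge 0$ implies $0\in\Gamma_-$ and hence (via Theorem~\ref{thm:schur} and Lemma~\ref{lem:wellposedgivesnegsdefK1}) $\p\Gamma_1\neq\emptyset$, the maximal solution $K_1^+$ and the gap $\Delta_1$ are well defined. Instantiating Theorem~\ref{thm:geoAREproj} at $\cV=0$ gives $\gamma(0)=K_1^+$ and $\mathcal{X}^0(A_1(K_1^+))=\Ker(\Delta_1)$. Therefore $\Delta_1>0$ is equivalent to $\sigma(A_1(K_1^+))\subset\CC^-$; that is, $K_1^+$ is the stabilizing solution of the ARE \eqref{eq:ARE_1} on the controllable subspace.

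Finally I would close the equivalence using the classical characterization: for a controllable pair $(A_1,B_1)$ with $Q_1\ge 0$, the algebraic Riccati equation admits a stabilizing solution if and only if every imaginary-axis eigenvalue of $A_1$ is $(Q_1,A_1)$-observable. In one direction (stabilizing $\Rightarrow$ observability) one argues by contradiction: an unobservable imaginary-axis eigenvector $v_1$ of $A_1$ satisfies $A_1(K_1^+)v_1=\lambda v_1$ after showing $K_1^+v_1=0$ (using $K_1^+\ge 0$ together with $0=v_1^{\ast}\phi_1(K_1^+)v_1 = \|B_1^{\top}K_1^+v_1\|^2$ expanded along the mode), contradicting $\sigma(A_1(K_1^+))\subset\CC^-$. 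In the converse direction (observability $\Rightarrow$ stabilizing), one shows that any imaginary-axis eigenvector $v_1$ of $A_1(K_1^+)$ must satisfy $B_1^{\top}K_1^+v_1=0$ (from the same Lyapunov-type identity $\phi_1(K_1^+)=0$), then $v_1$ is an eigenvector of $A_1$ at the same eigenvalue that is annihilated by $Q_1$, contradicting observability; hence $\mathcal{X}^0(A_1(K_1^+))=0$, i.e.\ $\Delta_1>0$.

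The main obstacle is the final step, where the positive-semidefinite Lyapunov-type manipulation on $\phi_1(K_1^+)=0$ must be executed carefully to convert modes of the closed-loop matrix $A_1(K_1^+)$ into modes of $A_1$ on the imaginary axis; the first two reductions are essentially bookkeeping but set up exactly the controllable, positive-semidefinite setting where this classical argument applies.
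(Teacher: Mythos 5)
Your proof is correct, but it takes a genuinely different route from the paper's. The paper also reduces to the controllable subsystem, but it then outsources the spectral content to two cited results: Molinari's theorem (Theorem~\ref{thm:HamDelta}), which equates $\Delta_1>0$ with the absence of pure imaginary eigenvalues of the Hamiltonian matrix $H_1$, and Kucera's lemma (Lemma~\ref{lem:eigHam}), which relates those eigenvalues to unobservable imaginary-axis modes of $(C_1,A_1)$ after factoring $Q_1=C_1^{\top}C_1$; the transfer of observability from $(Q,A)$ to $(Q_1,A_1)$ is handled by the (unproved) Lemma~\ref{prop:observResults}. You instead (a) prove the observability transfer directly by a PBH computation, using $Q\ge 0$ to show that $Q_1v_1=0$ already forces $Q_{12}^{\top}v_1=0$ --- in effect supplying a proof of Lemma~\ref{prop:observResults}(i)--(ii) in this setting; (b) identify $\Delta_1>0$ with $\sigma(A_1(K_1^+))\subset\CC^-$ via Theorem~\ref{thm:geoAREproj} at $\cV=0$ (this is Willems' Theorem~5, which the paper itself only quotes later, in Section~\ref{sec:discuss}); and (c) establish the classical equivalence ``a stabilizing solution of \eqref{eq:ARE_1} exists iff every imaginary-axis eigenvalue of $A_1$ is $(Q_1,A_1)$-observable'' by direct Lyapunov-type manipulation of $\phi_1(K_1^+)=0$, rather than through the Hamiltonian. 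Your route is self-contained and avoids invoking Molinari and Kucera, at the cost of reproving a classical fact; the paper's route is shorter given those references. One small repair to your sketch: in the ``stabilizing $\Rightarrow$ observability'' direction you neither need, nor does the displayed identity yield, $K_1^+v_1=0$; evaluating $v_1^{\ast}\phi_1(K_1^+)v_1=0$ along the unobservable imaginary mode gives only $B_1^{\top}K_1^+v_1=0$, but that already implies $A_1(K_1^+)v_1=A_1v_1=\lambda v_1$ and hence the desired contradiction, so the appeal to $K_1^+\ge 0$ there is superfluous.
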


The final verification of our result in the fixed endpoint case is to show that the closed-loop system, 
$\dot{x}(t) = (A - BB^{\top}K^+)x(t) = A(K^+)x(t)$, is asymptotically stable, thereby recovering Theorem 10.18(v) 
in \cite{TRE02}. Note that $A(K) = A-BB^{\top}K = 
\begin{bmatrix}A_1(K_1) & * \\ 0 & A_2 \end{bmatrix}$ so that $\sigma(A(K)) = \sigma(A_1(K_1)) \uplus \sigma(A_2)$. 
By Theorem~5 in  \cite{WIL71}, we have that $\Delta_1 > 0$ if and only if $\sigma(A_1(K_1^+)) \subset \CC^-$. 
Since $\sigma(A_2) \subset \CC^-$ by stabilizability and $\Delta_1 > 0$ by attainability, we have 
$\sigma(A(K^+)) \subset \CC^-$, as desired.

\section{Conclusion}
In this paper we address a problem in the area of linear quadratic optimal control which has been open for the last 20 years. Specifically, we consider the regular, infinite-horizon, stability-modulo-a-subspace, indefinite LQ problem when the dynamics are stabilizable. Previous works have also addressed this problem, but under the restrictive assumption that the dynamics are controllable. The generalization from controllable to stabilizable dynamics is significant in that there is a lack of structure in the solutions of the algebraic Riccati equation in the stabilizable case. Consequently the connection between the ARE solution set and the LQ problem under consideration has remained elusive. We resolved this gap by combining a suitable sufficient condition for a finite optimal cost with a specific  decomposition to unambiguously deduce the correct form of the optimal cost and control.
The determination of necessary and sufficient conditions for a finite value function in the regular, infinite-horizon, stability-modulo-a-subspace, indefinite LQ problem is still open. As future work, we are also interested in applying our result to reachability problems, namely by employing an indefinite cost functional on a stabilizable linear system to characterize the convergence of  trajectories to a nontrivial subspace over the infinite time horizon.

\bibliographystyle{siamplain}

\begin{appendix}
\appendixnotitle

\begin{theorem}[Theorem 1, \cite{ALBERT69}]
\label{thm:schur}
Given a real symmetric matrix $P = \begin{bmatrix} P_1 & P_{12} \\ P_{12}^{\top} & P_2 \end{bmatrix}$, 
the following conditions are equivalent:
\begin{enumerate}
\item $P \geq 0$.
\item $P_1 \geq 0, \; \; (I - P_1P_1^{\dagger})P_{12} = 0,        \; \; P_2 - P_{12}^{\top}P_1^{\dagger}P_{12} \geq 0$. 
\item $P_2 \geq 0, \; \; (I - P_2P_2^{\dagger})P_{12}^{\top} = 0, \; \; P_1 - P_{12}P_2^{\dagger}P_{12}^{\top} \geq 0$. 
\end{enumerate}
\end{theorem}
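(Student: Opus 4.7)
The statement is the classical Schur-complement characterization of positive semidefiniteness, but phrased with Moore--Penrose pseudo-inverses so that no nonsingularity of the diagonal blocks is assumed. My plan is to prove only the equivalence $(1) \Leftrightarrow (2)$; the equivalence $(1) \Leftrightarrow (3)$ follows from the same argument after exchanging the two block rows and columns via a permutation similarity, since $P \geq 0$ is preserved under congruence.

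The main tool I would use is the block congruence
\begin{equation*}
P = L \, D \, L^{\top}, \qquad
L := \begin{bmatrix} I & 0 \\ P_{12}^{\top} P_1^{\dagger} & I \end{bmatrix}, \qquad
D := \begin{bmatrix} P_1 & 0 \\ 0 & P_2 - P_{12}^{\top} P_1^{\dagger} P_{12} \end{bmatrix},
\end{equation*}
which is valid precisely when $(I - P_1 P_1^{\dagger}) P_{12} = 0$. To verify this, I would multiply out $L D L^{\top}$ directly. The off-diagonal blocks come out to $P_1 P_1^{\dagger} P_{12}$ (and its transpose), and the $(2,2)$ block is $P_{12}^{\top} P_1^{\dagger} P_1 P_1^{\dagger} P_{12} + (P_2 - P_{12}^{\top} P_1^{\dagger} P_{12})$. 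Using the pseudo-inverse identities $P_1^{\dagger} P_1 P_1^{\dagger} = P_1^{\dagger}$ together with the range condition $(I - P_1 P_1^{\dagger}) P_{12} = 0$ (equivalent to $P_1 P_1^{\dagger} P_{12} = P_{12}$, and taking transposes, $P_{12}^{\top} P_1^{\dagger} P_1 = P_{12}^{\top}$ since $P_1$ and $P_1^{\dagger}$ are symmetric), the off-diagonals reduce to $P_{12}$ and the $(2,2)$ block reduces to $P_2$, recovering $P$.

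With this factorization in hand, the implication $(2) \Rightarrow (1)$ is immediate: $L$ is lower triangular with unit diagonal, hence invertible, and the diagonal blocks of $D$ are both positive semidefinite by hypothesis, so $D \geq 0$ and therefore $P = L D L^{\top} \geq 0$. For $(1) \Rightarrow (2)$, the first assertion $P_1 \geq 0$ is obtained by testing $x^{\top} P x \geq 0$ on vectors of the form $x = (x_1, 0)$. The range condition $(I - P_1 P_1^{\dagger}) P_{12} = 0$ is the key nontrivial step, and I expect it to be the main obstacle; I would handle it by writing $P = M^{\top} M$ with $M = [M_1 \;\; M_2]$ (which exists since $P \geq 0$), observing that $P_1 = M_1^{\top} M_1$ and $P_{12} = M_1^{\top} M_2$, and then noting $\Im(P_{12}) \subset \Im(M_1^{\top}) = \Im(P_1)$, so that the orthogonal projector $I - P_1 P_1^{\dagger}$ onto $\Im(P_1)^{\perp}$ annihilates $P_{12}$. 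Finally, once the range condition is established, the factorization $P = L D L^{\top}$ becomes available, and since $L$ is invertible and $P \geq 0$, we obtain $D \geq 0$, which yields the Schur-complement inequality $P_2 - P_{12}^{\top} P_1^{\dagger} P_{12} \geq 0$.

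The expected difficulty is entirely concentrated in the range condition step, since this is the place where the pseudo-inverse formulation genuinely differs from the classical nonsingular Schur complement: without it, the factorization is meaningless, and with it, everything else is a short consequence of the congruence identity.
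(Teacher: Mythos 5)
Your proof is correct. Note that the paper itself gives no proof of this statement --- it is quoted directly from Albert (1969) as an auxiliary fact --- so there is nothing internal to compare against; your argument (the block congruence $P = LDL^{\top}$ valid under the range condition $(I-P_1P_1^{\dagger})P_{12}=0$, plus the factorization $P=M^{\top}M$ to derive that range condition from $P\geq 0$ via $\Im(P_{12})\subset\Im(M_1^{\top})=\Im(P_1)$) is the standard generalized Schur-complement proof and is essentially Albert's original route. You correctly identify the range condition as the only step where the pseudo-inverse formulation genuinely departs from the nonsingular case, and your handling of it is sound.
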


\begin{lemma} 
\label{lem:3by3blockschur}
Let $M$ be a symmetric positive semidefinite matrix with the block form
\begin{equation*}
M = \begin{bmatrix}M_1 & M_{12} \\ M_{12}^{\top} & M_2 \end{bmatrix} 
  = \begin{bmatrix} 0 & 0 & M_{12,1} \\ 0 & M_{1,22} & M_{12,2} \\ M_{12,1}^{\top} & M_{12,2}^{\top} & M_2 \end{bmatrix}.
\end{equation*}
\noindent Then $M_{12,1} = 0$.
\end{lemma}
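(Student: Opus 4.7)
The plan is to apply Theorem~\ref{thm:schur} (the pseudoinverse form of the Schur complement characterization of positive semidefiniteness) to $M$ with the natural $2\times 2$ blocking
\[
M = \begin{bmatrix} M_1 & M_{12} \\ M_{12}^{\top} & M_2 \end{bmatrix}, \qquad
M_1 := \begin{bmatrix} 0 & 0 \\ 0 & M_{1,22} \end{bmatrix}, \qquad
M_{12} := \begin{bmatrix} M_{12,1} \\ M_{12,2} \end{bmatrix}.
\]
Since $M \geq 0$, condition (2) of Theorem~\ref{thm:schur} yields $M_1 \geq 0$ and, more importantly, $(I - M_1 M_1^{\dagger}) M_{12} = 0$. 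The whole proof then reduces to computing this projector explicitly and reading off the top block.

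The key routine step is to use the block-diagonal structure of $M_1$ to evaluate its pseudo-inverse. Because the pseudo-inverse respects block diagonal form, $M_1^{\dagger} = \textup{diag}(0, M_{1,22}^{\dagger})$, so
\[
I - M_1 M_1^{\dagger} = \begin{bmatrix} I & 0 \\ 0 & I - M_{1,22} M_{1,22}^{\dagger} \end{bmatrix}.
\]
Substituting into $(I - M_1 M_1^{\dagger}) M_{12} = 0$ gives
\[
\begin{bmatrix} M_{12,1} \\ (I - M_{1,22} M_{1,22}^{\dagger}) M_{12,2} \end{bmatrix} = 0,
\]
and the top block immediately forces $M_{12,1} = 0$, as desired.

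There is no real obstacle here; the lemma is essentially a packaging of Theorem~\ref{thm:schur} in the special case where the $(1,1)$ block of $M_1$ is zero, so that the projector $I - M_1 M_1^{\dagger}$ acts as the identity on the first block of coordinates. I would write the proof in just a few lines along the above outline, with the only care being to note that the pseudo-inverse of a block-diagonal symmetric matrix is block-diagonal with the pseudo-inverses of the blocks, which is a standard property of $(\cdot)^{\dagger}$.
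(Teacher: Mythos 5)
Your proposal is correct and follows exactly the paper's argument: apply Theorem~\ref{thm:schur}(2) to the $2\times 2$ blocking, use that $M_1^{\dagger}=\mathrm{diag}(0,M_{1,22}^{\dagger})$, and read off $M_{12,1}=0$ from the top block of $(I-M_1M_1^{\dagger})M_{12}=0$. Nothing is missing.
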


\begin{proof}
Since $M \geq 0$, Theorem \ref{thm:schur} in particular implies that $(I - M_1 M_1^{\dagger})M_{12} = 0$. Using the properties of the pseudo-inverse, it can be shown that $M_1^{\dagger} = \begin{bmatrix} 0 & 0 \\ 0 & M_{1,22}^{\dagger} \end{bmatrix}$. Then the result follows from
\begin{equation*}
\begin{bmatrix} 0 \\ 0 \end{bmatrix} = \left(I - \begin{bmatrix} 0 & 0 \\ 0 & M_{1,22} \end{bmatrix} \begin{bmatrix} 0 & 0 \\ 0 & M_{1,22}^{\dagger}\end{bmatrix}\right)\begin{bmatrix}M_{12,1} \\ M_{12,2} \end{bmatrix} = \begin{bmatrix} I & 0 \\ 0 & I - M_{1,22}M_{1,22}^{\dagger} \end{bmatrix}\begin{bmatrix}M_{12,1} \\ M_{12,2} \end{bmatrix}.
\end{equation*}
\end{proof}

The following results are required for the proof of Theorem~\ref{thm:delta1pos}. 
Consider the Kalman controllable decomposition \eqref{eq:sysctrdecomp}.
First, we define the {\em Hamiltonian matrix} on the controllable subspace: 
\begin{equation}
H_1 := \begin{bmatrix} A_1 & -B_1 B_1^{\top} \\ -Q_1 & -A_1^{\top} \end{bmatrix}.
\end{equation}

\begin{lemma} 
\label{prop:observResults}
Let $A = \begin{bmatrix} A_1 & A_{12} \\ 0 & A_2 \end{bmatrix}$ with $\sigma(A_2) \subset \CC^-$ and $C = \begin{bmatrix}C_1 & C_2 \end{bmatrix}$. Then
\begin{enumerate}
\item[(i)] All of the eigenvalues of $A$ on the imaginary axis are $(C,A)$ observable if and only if all of the eigenvalues of $A_1$ on the imaginary axis are $(C_1,A_1)$ observable.
\item[(ii)] An eigenvalue of $A$ is $(C,A)$ observable if and only if it is $(C^{\top}C,A)$ observable.
\end{enumerate}
\end{lemma}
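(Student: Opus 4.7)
The plan is to prove both parts via the Popov-Belevitch-Hautus (PBH) eigenvector test: an eigenvalue $\lambda$ of $A$ is $(C,A)$ observable if and only if every nonzero $v \in \CC^n$ satisfying $Av = \lambda v$ also satisfies $Cv \neq 0$. Because $A$ is block upper triangular with $\sigma(A_2) \subset \CC^-$, we have $\sigma(A) = \sigma(A_1) \uplus \sigma(A_2)$, and in particular every eigenvalue of $A$ on the imaginary axis belongs to $\sigma(A_1)$ and not to $\sigma(A_2)$. This separation is the structural fact that powers part (i).

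For part (i), I would fix any $\lambda \in \sigma(A) \cap \CC^0$ and any eigenvector $v = (v_1, v_2)$ partitioned conformably with the block structure of $A$. The lower block row of $Av = \lambda v$ gives $A_2 v_2 = \lambda v_2$, and since $\lambda \notin \sigma(A_2)$ this forces $v_2 = 0$. The upper block row then reduces to $A_1 v_1 = \lambda v_1$, while $Cv = C_1 v_1$. Hence the nonzero eigenvectors of $A$ at $\lambda$ are precisely the vectors $(v_1, 0)$ with $v_1$ a nonzero eigenvector of $A_1$ at $\lambda$, and $(C,A)$ observability at $\lambda$ is equivalent to $(C_1, A_1)$ observability at $\lambda$. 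Ranging over all imaginary-axis eigenvalues yields the biconditional in (i). Equivalently, one may argue via column rank of the PBH matrix $\begin{bmatrix} \lambda I - A \\ C \end{bmatrix}$, using that $\lambda I - A_2$ is invertible to absorb its column block.

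For part (ii), fix any $\lambda \in \sigma(A)$ and any eigenvector $v$ with $Av = \lambda v$. The key observation is that $Cv = 0 \Longleftrightarrow C^{\top} C v = 0$: the forward implication is immediate, and the reverse follows since $C$ is real and therefore $v^{*} C^{\top} C v = (Cv)^{*}(Cv) = \|Cv\|^2$, which vanishes exactly when $Cv = 0$. Consequently the pairs $(C,A)$ and $(C^{\top}C, A)$ have identical unobservable eigenvectors, so the PBH test returns the same verdict at $\lambda$ for both pairs.

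Neither part presents a substantive obstacle; the result is a standard fact about observability, and the only care needed is to track that the arguments for (ii) go through for complex eigenvectors of a real matrix $C$, which is handled by the identity $v^{*}C^{\top}Cv = \|Cv\|^2$.
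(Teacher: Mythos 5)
Your proof is correct. Note that the paper states Lemma~\ref{prop:observResults} in the appendix without any proof, treating it as a standard fact, so there is no argument of theirs to compare against; your PBH eigenvector argument --- using $\sigma(A_2)\subset\CC^-$ to force $v_2=0$ for imaginary-axis eigenvectors in part (i), and the identity $v^{*}C^{\top}Cv=\|Cv\|^2$ (valid for complex $v$ and real $C$) to get $\Ker(C)=\Ker(C^{\top}C)$ in part (ii) --- is exactly the natural justification and is complete.
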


\begin{theorem}[\cite{MOL77}] 
\label{thm:HamDelta}
Let $(A_1,B_1)$ be controllable. The following conditions are equivalent.
\begin{enumerate}
\item[(i)] 
The maximal and minimal solutions of the ARE (and ARI), $K_1^+$ and $K_1^-$ respectively, exist and $\Delta_1 > 0$.
\item[(ii)] 
The Hamiltonian matrix has no pure imaginary eigenvalues, i.e., if $\lambda 
\in \sigma(H_1)$ then $\Re(\lambda) \neq 0$. 
\end{enumerate}
\end{theorem}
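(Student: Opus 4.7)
My plan is to reduce everything to the controllable subsystem via the Kalman decomposition, invoke the classical Hamiltonian characterization from Theorem~\ref{thm:HamDelta}, and then bridge the Hamiltonian condition to the observability condition via a direct PBH-style calculation on $H_1$.

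First I would exploit $Q \geq 0$ by factoring $Q = C^{\top}C$ and partitioning $C = \begin{bmatrix} C_1 & C_2 \end{bmatrix}$ conformably with the Kalman decomposition \eqref{eq:sysctrdecomp}; this yields $Q_1 = C_1^{\top}C_1 \geq 0$. Since $(A,B)$ stabilizable gives $\sigma(A_2) \subset \CC^-$, Lemma~\ref{prop:observResults}(i) states that every imaginary eigenvalue of $A$ is $(C,A)$-observable iff every imaginary eigenvalue of $A_1$ is $(C_1,A_1)$-observable. Applying Lemma~\ref{prop:observResults}(ii) on both sides converts $(C,A)$-observability to $(Q,A)$-observability and $(C_1,A_1)$-observability to $(Q_1,A_1)$-observability. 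Hence the hypothesis on $(Q,A)$ is equivalent to the same statement for $(Q_1,A_1)$ on the controllable subsystem.

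Second, I would verify the hypotheses of Theorem~\ref{thm:HamDelta}. Since $Q_1 \geq 0$, we have $\phi_1(0) = Q_1 \geq 0$, so $0 \in \Gamma_1 \neq \emptyset$; because $(A_1,B_1)$ is controllable, Theorem~\ref{thm:extremal}(i) provides the extremal solutions $K_1^\pm \in \p\Gamma_1$. Theorem~\ref{thm:HamDelta} then gives $\Delta_1 > 0$ iff the Hamiltonian $H_1$ has no pure imaginary eigenvalues.

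The main obstacle, and the only nonroutine step, is bridging the Hamiltonian spectral condition to observability: for controllable $(A_1,B_1)$ with $Q_1 \geq 0$, I claim $H_1$ has no eigenvalue on $i\RR$ iff every imaginary eigenvalue of $A_1$ is $(Q_1,A_1)$-observable. The ``only if'' direction is immediate by PBH: if $A_1 x = i\omega x$ with $Q_1 x = 0$ and $x \neq 0$, then $H_1 \begin{bmatrix} x \\ 0 \end{bmatrix} = i\omega \begin{bmatrix} x \\ 0 \end{bmatrix}$. For the converse, suppose $H_1 \begin{bmatrix} x \\ y \end{bmatrix} = i\omega \begin{bmatrix} x \\ y \end{bmatrix}$ with $(x,y) \neq 0$. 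I would take the conjugate inner product of the top block with $y$ and of the bottom block with $x$, then form a suitable combination so that the cross term $y^{*}A_1 x + x^{*}A_1^{\top}y$ cancels and only $\la Q_1 x, x\ra + \|B_1^{\top}y\|^2$ remains on the real side, equal to a purely imaginary quantity. Equating real parts forces $\la Q_1 x, x\ra = 0$ and $B_1^{\top}y = 0$, whence $Q_1 \geq 0$ gives $Q_1 x = 0$. The two block equations then reduce to $A_1 x = i\omega x$ and $A_1^{\top}y = -i\omega y$. If $x \neq 0$ we conclude $(Q_1,A_1)$-unobservability at $i\omega$; if $x = 0$ then $y \neq 0$, and $A_1^{\top}y = -i\omega y$ with $B_1^{\top}y = 0$ contradicts the PBH controllability test for $(A_1,B_1)$. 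Chaining the three equivalences completes the proof.
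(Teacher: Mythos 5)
Your proposal does not prove the stated theorem; it proves a different one. The statement at hand is Theorem~\ref{thm:HamDelta} itself: for a controllable pair $(A_1,B_1)$ and a general, possibly indefinite $Q_1$, the extremal ARE solutions $K_1^{\pm}$ exist with gap $\Delta_1 = K_1^+ - K_1^- > 0$ if and only if the Hamiltonian $H_1$ has no purely imaginary eigenvalues. Your argument invokes exactly this equivalence as a black box (``Theorem~\ref{thm:HamDelta} then gives $\Delta_1 > 0$ iff the Hamiltonian $H_1$ has no pure imaginary eigenvalues''), which is circular: the entire content of the theorem is the link between $\sigma(H_1)$ and the positivity of the gap, and your write-up supplies no argument for it. A non-circular route would use, for instance, the similarity
\[
\begin{bmatrix} I & 0 \\ K_1 & I\end{bmatrix}^{-1} H_1 \begin{bmatrix} I & 0 \\ K_1 & I \end{bmatrix} = \begin{bmatrix} A_1(K_1) & -B_1B_1^{\top} \\ 0 & -A_1(K_1)^{\top} \end{bmatrix},
\]
valid for any $K_1 \in \p\Gamma_1$, so that $\sigma(H_1) = \sigma(A_1(K_1)) \uplus \sigma(-A_1(K_1))$, combined with Willems' characterization of the extremal solutions (Theorem~\ref{thm:extremal}(ii) and the fact that $\Delta_1>0$ iff $\sigma(A_1(K_1^+))\subset\CC^-$), plus an argument for the existence direction when $\sigma(H_1)\cap\CC^0=\emptyset$. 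None of this appears in your proposal.

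What you have actually written is, in substance, a proof of Theorem~\ref{thm:delta1pos}. The hypotheses you rely on --- $Q \geq 0$, the factorization $Q = C^{\top}C$, stabilizability of $(A,B)$ --- are the hypotheses of that theorem, not of Theorem~\ref{thm:HamDelta}, which imposes no sign condition on $Q_1$ and treats the existence of $K_1^{\pm}$ as part of the equivalence rather than as a hypothesis to be ``verified.'' Read as a proof of Theorem~\ref{thm:delta1pos}, your chain of reductions matches the paper's appendix proof (Lemma~\ref{prop:observResults} followed by Theorem~\ref{thm:HamDelta}), and your PBH computation on $H_1$ is a clean, self-contained substitute for the paper's citation of Lemma~\ref{lem:eigHam}; that piece is correct and worth keeping, noting that it genuinely needs $Q_1 \geq 0$ to pass from $x^{*}Q_1x=0$ to $Q_1x=0$ and hence cannot serve the indefinite setting of the target theorem. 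As a proof of the stated theorem, however, the attempt has a fatal gap: the equivalence between the Hamiltonian spectrum and $\Delta_1 > 0$ is assumed, not proven.
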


\begin{lemma}[Lemma 8, \cite{KUC72}] 
\label{lem:eigHam}
Suppose that $Q_1 = C_1^{\top}C_1$. Then there is an eigenvalue $\lambda$ of $H_1$ such that $\Re(\lambda) = 0$ if and only if there is an uncontrollable eigenvalue $\lambda $ of $(A_1,B_1)$ and/or unobservable eigenvalue $\lambda$ of $(C_1,A_1)$ such that $\Re(\lambda) = 0$.
\end{lemma}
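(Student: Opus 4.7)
The plan is to prove both directions directly, using the Popov--Belevitch--Hautus (PBH) eigenvalue test for controllability and observability together with direct manipulation of the block eigenvalue equation for $H_1$. Throughout, for $\lambda \in i\RR$ we will exploit the identity $\bar\lambda = -\lambda$.

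For the sufficiency direction ($\Leftarrow$), I would exhibit eigenvectors of $H_1$ explicitly from PBH witnesses. If $\lambda \in i\RR$ is an unobservable eigenvalue of $(C_1,A_1)$, PBH yields a nonzero $v$ with $A_1 v = \lambda v$ and $C_1 v = 0$; then $w := (v,0)^{\top}$ satisfies $H_1 w = (\lambda v,\; -C_1^{\top} C_1 v)^{\top} = \lambda w$. If $\lambda \in i\RR$ is an uncontrollable eigenvalue of $(A_1,B_1)$, PBH yields a nonzero $v$ with $A_1^{\top} v = \bar\lambda v = -\lambda v$ and $B_1^{\top} v = 0$; then $w := (0,v)^{\top}$ satisfies $H_1 w = (-B_1 B_1^{\top} v,\; -A_1^{\top} v)^{\top} = (0,\lambda v)^{\top} = \lambda w$. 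Either way, $\lambda \in \sigma(H_1)$.

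For the necessity direction ($\Rightarrow$), suppose $H_1 w = \lambda w$ with $\lambda \in i\RR$ and $w = (x,y)^{\top} \neq 0$. The block equations read
\[
A_1 x - B_1 B_1^{\top} y = \lambda x, \qquad -C_1^{\top} C_1 x - A_1^{\top} y = \lambda y.
\]
The key step is an energy-type identity: pre-multiply the first equation by $y^*$, take the conjugate transpose of the second and post-multiply by $x$, then add. The cross-term $y^* A_1 x$ cancels, yielding
\[
-\|B_1^{\top} y\|^2 - \|C_1 x\|^2 = (\lambda + \bar\lambda)\, y^* x = 2\Re(\lambda)\, y^* x = 0.
\]
Hence $B_1^{\top} y = 0$ and $C_1 x = 0$. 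Substituting these back reduces the eigenvalue equations to $A_1 x = \lambda x$ and $A_1^{\top} y = -\lambda y = \bar\lambda y$. Since $w \neq 0$, at least one of $x,y$ is nonzero. If $x \neq 0$, the PBH test applied to $(C_1,A_1)$ identifies $\lambda$ as an unobservable eigenvalue on the imaginary axis. Otherwise $x = 0$, so $y \neq 0$, and the PBH test applied to $(A_1,B_1)$ (using $A_1^{\top} y = \bar\lambda y$ and $B_1^{\top} y = 0$) identifies $\lambda$ as an uncontrollable eigenvalue on the imaginary axis.

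The main obstacle is the energy identity in the necessity direction. The algebra itself is short, but it depends crucially on the sign-indefinite structure of $H_1$ (its Hamiltonian property with respect to $J = \bigl[\begin{smallmatrix} 0 & I \\ -I & 0 \end{smallmatrix}\bigr]$), which is precisely what produces the cancellation of $y^* A_1 x$ and isolates the two nonnegative quantities $\|B_1^{\top} y\|^2$ and $\|C_1 x\|^2$. Once this identity is in hand, the remainder of the argument is a straightforward case split on which block of the Hamiltonian eigenvector is nonzero, matched to the appropriate PBH criterion.
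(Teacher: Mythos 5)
The paper does not prove this lemma; it is imported verbatim as Lemma~8 of \cite{KUC72}, so there is no in-paper argument to compare against. Your proof is correct and self-contained. The sufficiency direction correctly manufactures eigenvectors of $H_1$ of the form $(v,0)^{\top}$ and $(0,v)^{\top}$ from PBH witnesses (note that the left-eigenvector form $A_1^{\top}v=\bar\lambda v$, $B_1^{\top}v=0$ is exactly what the uncontrollability test gives, and $\bar\lambda=-\lambda$ on the imaginary axis makes the signs work out). The necessity direction's energy identity is the standard exploitation of the Hamiltonian structure: the cross terms $y^{*}A_1x$ cancel, leaving $-\|B_1^{\top}y\|^{2}-\|C_1x\|^{2}=2\Re(\lambda)\,y^{*}x=0$, which forces both norms to vanish, and the ensuing case split on $x\neq 0$ versus $x=0$, $y\neq 0$ lands exactly on the unobservable-mode and uncontrollable-mode PBH conditions respectively. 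The "and/or" in the statement is honored since when both $x$ and $y$ are nonzero you simply obtain at least one of the two conclusions. This is essentially Kucera's original argument, and nothing is missing.
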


\begin{proof}[Proof of Theorem~\ref{thm:delta1pos}]
First we check that $\Delta_1 = K_1^+ - K_1^-$ is well-defined. Since $(A,B)$ is stabilizable and $Q \geq 0$ 
implies $0 \in \Gamma_-$, we can apply Lemma~\ref{lem:wellposedgivesnegsdefK1} to get that 
$K_1^- \subset \p \Gamma_{1 - } \subset \p \Gamma_1 \neq \emptyset$, and so Theorem~\ref{thm:extremal}(i) 
establishes the existence of $K_1^+$ as well.

Since $Q \geq 0$, write $Q = C^{\top}C$. In the Kalman controllability decomposition, this means 
$C = \begin{bmatrix} C_1 & C_2 \end{bmatrix}$, so that $Q_1 = C_1^{\top}C_1$. 

First we can use Theorem \ref{thm:HamDelta} to establish $\Delta_1 > 0$ is equivalent to $H_1$ having no pure 
imaginary eigenvalues. The contrapositive of the Lemma \ref{lem:eigHam} says that $H_1$ has no pure imaginary 
eigenvalues if and only if there is no uncontrollable eigenvalue of $(A_1,B_1)$ {\em and} no unobservable eigenvalue of $(C_1,A_1)$ on the imaginary axis. Since in our scenario $(A_1,B_1)$ is controllable, this statement is equivalent to $H_1$ has no pure imaginary eigenvalues if and only if there are no unobservable eigenvalues of $(C_1,A_1)$ on the imaginary axis. Of course, there are no unobservable eigenvalues of $(C_1,A_1)$ on the imaginary axis if and only if all the eigenvalues of $A_1$ on the imaginary axis are $(C_1,A_1)$ observable. 
Applying Lemma~\ref{prop:observResults}(i), we get that all the eigenvalues of $A_1$ on the imaginary axis are $(C_1,A_1)$ observable if and only if all the eigenvalues of $A$ on the imaginary axis are $(C,A)$ observable. Then applying Lemma~\ref{prop:observResults}(ii), all the eigenvalues of $A$ on the imaginary axis are $(C,A)$ observable if and only if all the eigenvalues of $A$ on the imaginary axis are $(C^{\top}C,A)$ observable.
Since $Q = C^{\top}C$, we have proven every eigenvalue of $A$ on the imaginary axis is $(Q,A)$ observable if and only if $\Delta_1 > 0$ by a long chain of equivalent statements.
\end{proof}

\end{appendix}
\end{document}